\title{The construction of $E_{\infty}$ ring spaces from bipermutative categories}
\author{J\,P May}
\address{Department of Mathematics\\
The University of Chicago\\
Chicago, Illinois 60637}
\email{may@math.uchicago.edu}
\urladdr{http://www.math.uchicago.edu/~may}
\newtheorem{thm}{Theorem}[section]
\newtheorem{cor}[thm]{Corollary}
\newtheorem{prop}[thm]{Proposition}
\newtheorem{lem}[thm]{Lemma}
\newtheorem{conj}[thm]{Conjecture}
\theoremstyle{definition}
\newtheorem{defn}[thm]{Definition}
\newtheorem{con}[thm]{Construction}
\newtheorem{notns}[thm]{Notations}
\theoremstyle{remark}
\newtheorem{rem}[thm]{Remark}
\newtheorem{rems}[thm]{Remarks}
\newtheorem{sch}[thm]{Scholium}
\let\c@equation\c@thm
\numberwithin{equation}{section}
\DeclareFontFamily{OMS}{rsfs}{\skewchar\font'60}
\DeclareFontShape{OMS}{rsfs}{m}{n}{<-5>rsfs5 <5-7>rsfs7 <7->rsfs10 }{}
\DeclareSymbolFont{rsfs}{OMS}{rsfs}{m}{n}
\DeclareSymbolFontAlphabet{\scr}{rsfs}
\newcommand{\sA}{\scr{A}}
\newcommand{\sB}{\scr{B}}
\newcommand{\sC}{\scr{C}}
\newcommand{\sD}{\scr{D}}
\newcommand{\sE}{\scr{E}}
\newcommand{\sF}{\scr{F}}
\newcommand{\sG}{\scr{G}}
\newcommand{\sI}{\scr{I}}
\newcommand{\sJ}{\scr{J}}
\newcommand{\sK}{\scr{K}}
\newcommand{\sL}{\scr{L}}
\newcommand{\sM}{\scr{M}}
\newcommand{\sN}{\scr{N}}
\newcommand{\sO}{\scr{O}}
\newcommand{\sP}{\scr{P}}
\newcommand{\sQ}{\scr{Q}}
\newcommand{\sT}{\scr{T}}
\newcommand{\sU}{\scr{U}}
\newcommand{\sV}{\scr{V}}
\newcommand{\sW}{\scr{W}}
\newcommand{\bE}{\mathbb{E}}
\newcommand{\bR}{\mathbb{R}}
\newcommand{\bZ}{\mathbb{Z}}
\newcommand{\al}{\alpha}
\newcommand{\be}{\beta}
\newcommand{\ga}{\gamma}
\newcommand{\de}{\delta}
\newcommand{\epz}{\varepsilon}
\newcommand{\ph}{\phi}
\newcommand{\et}{\eta}
\newcommand{\io}{\iota}
\newcommand{\la}{\lambda}
\newcommand{\tha}{\theta}
\newcommand{\rh}{\rho}
\newcommand{\si}{\sigma}
\newcommand{\ta}{\tau}
\newcommand{\ch}{\chi}
\newcommand{\ps}{\psi}
\newcommand{\ze}{\zeta}
\newcommand{\om}{\omega}
\newcommand{\GA}{\Gamma}
\newcommand{\LA}{\Lambda}
\newcommand{\SI}{\Sigma}
\newcommand{\OM}{\Omega}
\newcommand{\XI}{\Xi}
\newcommand{\UP}{\Upsilon}
\newcommand{\PI}{\Pi}
\newcommand{\PH}{\Phi}
\newcommand{\com}{\circ}     % composition of functions
\newcommand{\iso}{\cong}     % preferred isomorphism symbol
\newcommand{\sma}{\wedge}    % smash product
\newcommand{\wed}{\vee}      % wedge sum
\newcommand{\rtarr}{\longrightarrow}
\def\quickop#1{\expandafter\newcommand\csname #1\endcsname{\operatorname{#1}}}
\newcommand{\DK}{\sK\int\sD}
\begin{document}

\begin{abstract} The construction of $E_{\infty}$ ring spaces and
thus $E_{\infty}$ ring spectra from bipermutative categories gives
the most highly structured way of obtaining the $K$-theory commutative 
ring spectra.  The original construction dates from around 1980 and 
has never been superseded, but the original details are difficult,
obscure, and slightly wrong.  We rework the construction in a much 
more elementary fashion.
\end{abstract}

\maketitle

\tableofcontents

\section*{Introduction}

Bipermutative categories give the most important input into 
multiplicative infinite loop space theory.  The classifying
space of a permutative category is an $E_{\infty}$ space.  We
would like to say that the classifying space of a bipermutative
category is equivalent to an $E_{\infty}$ ring space. That is a 
deeper statement, but it is also true.

My first purported proof of this passage, in \cite{MQR}, was incorrect.
It was based on a nonexistent $E_{\infty}$ operad pair. I wrote the quite
difficult  paper \cite{Mult} to correct this. Although the correction is 
basically correct, there are two rather minor errors of detail in \cite{Mult}
and the paper is quite hard to read.  Fixes for the errors were 
in place in the early 1990's, but were never published.\footnote{The more substantial
fix is purely combinatorial and was given to me by Uwe Hommel in the early 1980's. 
That correction was submitted to JPAA, where \cite{Mult} appeared,
in 1986. The editors declined to publish it since the correction was 
relatively minor and was unreadable in isolation. The introduction of
\cite{EM} exaggerated the errors in \cite{Mult}, which helped spur this
simplified reworking.}
While writing the prequel \cite{Prequel}, I rethought the technical details and 
saw that the easier fix leads to quite elementary ideas that make the harder 
fix unnecessary.  I will give the details here, since they substantially
simplify \cite{Mult}.  In a sense the change is trivial.  The minor errors 
referred to above only concern considerations of basepoints, and I will redo the 
theory in a way that allows the basepoints to take care of themselves, 
following \cite[1.4]{Prequel}. This changes the ground categories of our 
monads to ones made up of unbased spaces, and the change trivializes the
combinatorial descriptions of the relevant monads.

Since the treatment of basepoints is so crucial, we state our conventions
right away.  We consider both based and unbased spaces in \S1 and \S2.
We work solely with unbased spaces in \S\S3--13; we alert the reader 
to a relevant change of notations that is explained at the start of \S3.  
We also fix the convention that when we say that a map is an equivalence, 
we mean that it is a weak homotopy equivalence.

In fact, the mistakes had nothing to do with bipermutative categories.
As I will recall, the work of \cite{MayPer, Mult, Woolf} includes 
two different and entirely correct ways of constructing 
$(\sF\int \sF)$-spaces from bipermutative categories.  This is quite
standard and, by now, quite elementary category theory.  By pullback,
$(\sF\int\sF)$-spaces are $(\hat{\sG}\int \hat{\sC})$-spaces, where
$\hat{\sG}\int \hat{\sC}$ is the category of ring operators associated
to an $E_{\infty}$ operad pair $(\sC,\sG)$.  The
minor errors concerned the construction of $(\sC,\sG)$-spaces, that
is $E_{\infty}$ ring spaces, from $(\hat{\sG}\int \hat{\sC})$-spaces.
With the details here, that construction is now also mainly elementary category theory.

The following diagram will serve as a guide to the revised theory.
It expands the top two lines of the diagram from \cite{Bull} 
that we focused on in the prequel \cite[(0.1)]{Prequel}. 

{\small

\begin{equation}\label{DIAG}
\xymatrix{
 \text{PERM CATS}\ar[d] & & \text{BIPERM CATS} \ar[d]  \\
\sF-\text{CATS} \ar[d]_{B} & & (\sF\int\sF)-\text{CATS}  \ar[d]^{B}\\
 \sF-\text{SPACES} \ar@<1ex>[d] 
& & (\sF\int\sF)-\text{SPACES} \ar@<1ex>[d]  \\
 \hat{\sC}-\text{SPACES}  \ar@<1ex>[dd] \ar@<1ex>[u]& & 
(\hat{\sG}\int\hat{\sC})-\text{SPACES} \ar@<1ex>[d] \ar@<1ex>[u]\\
 & & (\hat{\sC},\hat{\sG})-\text{SPACES} \ar@<1ex>[d] \ar@<1ex>[u]\\
 \sC-\text{SPACES} \ar@{=}[d] \ar@<1ex>[uu]& & (\sC,\sG)-\text{SPACES} 
\ar@{=}[d] \ar@<1ex>[u]\\
 E_{\infty}\ \text{SPACES} & & E_{\infty}\  \text{RING SPACES}   \\}
\end{equation}

}

The intermediate pairs of downwards pointing arrows are accompanied by
upwards pointing arrows, as we will explain, but our focus
is on the downwards arrows, whose bottom targets are the inputs
of the additive and multiplicative black box of the prequel \cite{Prequel}.
We shall work mainly from the bottom of the diagram upwards. 

We review the input of additive infinite loop space theory in 
\S\S\ref{MT1}-\ref{MT3}, which largely follow May and Thomason \cite{MT}.  The central
concept is that of the category of operators $\hat{\sC}$ constructed
from an operad $\sC$. This gives a conceptual intermediary between 
Segal's $\sF$-spaces, or $\GA$-spaces, and $E_{\infty}$ spaces. We
recall this notion in \S\ref{MT1}, and we discuss monads associated
to categories of operators in \S\ref{MT2}.  A key point is to compare
monads on the categories of based and unbased spaces. We give based
and unbased versions of the parallel pair of arrows relating 
$\sF$-spaces and $\hat{\sC}$-spaces in \S\ref{MT1} and \S\ref{MT2}.  
Departing from \cite{MT}, we give an unbased version of the 
parallel pair of arrows relating $\sC$-spaces and $\hat{\sC}$-spaces 
in \S\ref{MT3}. The comparison uses the two-sided monadic bar construction 
that was advertised in \cite[\S8]{Prequel} and used in \cite{MT}, but with 
simplifying changes of ground categories as compared with those used in 
\cite{MT}. 

We then give a parallel review of the input of multiplicative
infinite loop space theory, largely following May \cite{Mult}. Here
we have three pairs of parallel arrows, rather than just two, and 
we need the intermediate category of $(\hat{\sC},\hat{\sG})$-spaces 
that is displayed in (\ref{DIAG}).   This category has two 
equivalent conceptual descriptions,
one suitable for the comparison given by the middle right pair of parallel 
arrows and the other suitable for the comparison given by the bottom right 
pair of parallel arrows.  The equivalence of the two descriptions is 
perhaps the lynchpin of the theory.

We recall the precise definition of an action of one operad on another 
and of one category of operators on another in \S\ref{Mult0}.  We introduce 
categories of ring operators and show that there is a category of ring 
operators $\sJ = \hat{\sG}\int\hat{\sC}$ associated to an operad pair 
$(\sC,\sG)$ in \S\ref{Mult1}.  Actions of $\sJ$ specify the
$(\hat{\sG}\int\hat{\sG})$-spaces of (\ref{DIAG}).  We also elaborate 
the comparison of $\sF$-spaces with $\hat{C}$-spaces given in \S\ref{MT1}
to a comparison of $(\sF\int\sF)$-spaces with $\sJ$-spaces in \S\ref{Mult1}. 
That gives the top right pair of parallel arrows in (\ref{DIAG}).  

We define  $(\hat{\sC},\hat{\sG})$-spaces in \S\ref{MT2too}.  They are intermediate 
between $\sJ$-spaces and $(\sC,\sG)$-spaces, being less 
general than the former and more general than the latter.  To compare
these three notions, we work out the structure of a monad $\bar{J}$ 
whose algebras are the $\sJ$-spaces in \S\ref{barJstruc}.  This 
is where the theory diverges most fundamentally from that of \cite{Mult}. 
We define $\bar{J}$ on a ground category that uses only unbased spaces, 
thus eliminating the need for all of the hard work in \cite{Mult}. This
change also leads to considerable clarification of the conceptual structure 
of the theory.

We use this analysis to construct the middle right pair of parallel arrows 
of (\ref{DIAG}), comparing  $(\hat{\sC},\hat{\sG})$-spaces to $\sJ$-spaces, 
in \S\ref{Bonzai}.  We use it to compare monads on the ground category for
$\sJ$-spaces and on the ground category for $(\hat{\sC},\hat{\sG})$-spaces in
\S\ref{addenda}.  This comparison implies the promised equivalence of our 
two descriptions of the category of $(\hat{\sC},\hat{\sG})$-spaces.  Using
our second description, we construct the bottom right pair of parallel arrows 
of (\ref{DIAG}), comparing  $(\sC,\sG)$-spaces to $(\hat{\sC},\hat{\sG})$-spaces, 
in \S\ref{Mult02}.  This comparison is just a multiplicative 
elaboration of the comparison of $\sC$-spaces and $\hat{\sC}$-spaces in 
\S\ref{MT3}.  

The theory described so far makes considerable use of general categorical 
results about monads and, following Beck \cite{Beck}, about how monads are used 
to encode distributivity phenomena.  These topics are treated in Appendices A and B.

With this theory in place, we recall what permutative and 
bipermutative categories are in \S\ref{Per} and \S\ref{Biper}.  Some examples of 
bipermutative categories will be recalled in the sequel \cite{Sequel}.   There are several variants
of the definition. We shall focus on the original precise definition in order to relate bipermutative categories to $E_{\infty}$ ring spaces most simply, but that is not too important. It is more important that we include topological bipermutative categories, since some of the nicest applications involve the comparison of discrete and topological examples. In line with this, all categories throughout the paper are understood to be topologically enriched and all functors and natural transformations are understood to be continuous. We sometimes repeat this for emphasis, but it is always assumed.  

We explain how to construct $E_{\infty}$ spaces from permutative categories
in \S\ref{Per} and how not to construct $E_{\infty}$ ring spaces from 
bipermutative categories in \S\ref{Biper}.  We recall one of the two correct 
passages from bipermutative categories to $(\sF\int\sF)$-categories in 
\S\ref{Doperad}.  Applying the classifying space functor $B = |N(-)|$,
we obtain $(\sF\int\sF)$-spaces,
from which we can construct $E_{\infty}$ ring spaces.  

There is a more recent foundational theory analogous to that reworked here, 
which is due to Elmendorf and Mandell \cite{EM}.  As recalled in the prequel 
\cite{Prequel}, their work produces (naive) $E_{\infty}$ symmetric spectra, and 
therefore commutative symmetric ring spectra, from (a weakened version of) 
bipermutative categories. Most importantly, they show how to construct algebra 
and module spectra as well as ring spectra from categorical data.\footnote{In
work in progress with Vigleik Angeltveit, we define algebras and modules on
the $E_{\infty}$ space level, which is completely new, and we elaborate the theory 
of this paper to give
a comparison between $E_{\infty}$ rings, modules, and algebras of spaces and
of spectra.} However, their introduction misstates the relationship between their 
work and the 1970's work.  The 1970's applications all depend
on $E_{\infty}$ ring {\em spaces} and not just on commutative ring {\em spectra}.  That is,
they depend on the passage from 
bipermutative categories to $E_{\infty}$ ring spaces, and from there to $E_{\infty}$ ring
spectra. 
Such applications, some of which are summarized in the sequel \cite{Sequel}, are 
not accessible to foundations based on diagram ring 
spectra. We reiterate that a comparison is needed.

\vspace{2mm} 

It is a pleasure to thank Vigleik Angeltveik, John Lind, and
an anonymous referee
for catching errors and suggesting improvements.

\section{Operads, categories of operators, and $\sF$-spaces}\label{MT1}

We review the input data of additive infinite loop space theory, 
since we must build on that to describe the input data for multiplicative
infinite loop space theory.  We first recall the definition of a
category of operators $\sD$ and the construction of a category of 
operators $\hat{\sC}$ from an operad $\sC$.  We then recall the notion 
of a $\sD$-space for a category of operators $\sD$, and finally we
show how to compare categories of $\sD$-spaces as $\sD$ varies.
Aside from the correction of a small but illuminating mistake, 
this material is taken from \cite{MT}, to which we refer the reader 
for further details.

Recall that $\sF$ denotes the category of finite based sets 
$\mathbf{n} =\{0,1,\cdots,n\}$, with $0$ as basepoint,
and based functions.  The category $\sF$ is opposite to 
Segal's category $\GA$ \cite{Seg2}, and $\sF$-spaces are
just $\GA$-spaces by another name.  Let $\PI\subset \sF$ 
be the subcategory whose morphisms are the based functions 
$\ph\colon \mathbf{m} \rtarr \mathbf{n}$ such 
that $|\ph^{-1}(j)| \leq 1$ for $1\leq j\leq n$, 
where $|S|$ denotes the cardinality of a finite set $S$. 
Such maps are composites of injections ($\ph^{-1}(0)=0$) and 
projections ($|\ph^{-1}(j)| = 1$ for $1\leq j\leq n$). The
permutations are the maps that are both injections and
projections.  For an injection 
$\ph\colon \mathbf{m}\rtarr \mathbf{n}$, define
$\SI_{\ph}\subset\SI_n$ to be the subgroup of 
permutations such that\footnote{This is a slight correction of \cite[1.2]{MT}, 
the need for which was observed in \cite[p. 11]{Mult}.}
$\si(\text{Im}\,\ph) = \text{Im}\,\ph$. We shall later
make much use of the subcategory $\UP\subset \PI$ whose morphisms
are the projections.\footnote{$\UP$ is Greek Upsilon and stands for ``unbased'';
we have discarded the injections from $\PI$, keeping only the surjections.
The injections correspond to basepoint insertions in $\PI$-spaces $\{X^n\}$.} 
Note that $\mathbf{0}$ is an initial and terminal object of $\PI$ and of 
$\sF$, giving a map $0$ between any two objects, but it is only a terminal
object of $\UP$. 

\begin{defn}\label{catoper} A category of operators is a 
topological category $\sD$ with objects $\mathbf{n} =\{0,1,\cdots,n\}$, 
$n\geq 0$, such that the inclusion $\PI\rtarr \sF$ factors as the
composite of an inclusion $\PI\subset \sD$ and a surjection 
$\epz\colon \sD\rtarr \sF$, both of which are the identity on objects.  
We require 
the maps
$\sD(\mathbf{q},\mathbf{m}) \rtarr \sD(\mathbf{q},\mathbf{n})$
induced by an injection $\ph\colon \mathbf{m}\rtarr \mathbf{n}$ 
to be $\SI_{\ph}$-cofibrations.  A map $\nu\colon \sD\rtarr \sE$ of 
categories of operators is a continuous functor $\nu$ over $\sF$ and 
under $\PI$. It is an equivalence if each map
$\nu\colon \sD(\mathbf{m},\mathbf{n}) \rtarr \sE(\mathbf{m},\mathbf{n})$
is an equivalence.
\end{defn}

Recall that we understand equivalences to mean weak homotopy equivalences.
More details of the following elementary definition are given in 
\cite[4.1]{MT}; see also Notations \ref{details} below. The cofibration condition 
of the previous definition is automatically satisfied since the maps in 
question are inclusions of components in disjoint unions.  As in \cite{Prequel}, 
we require the $0^{th}$ space of an operad to be a point.

\begin{defn}\label{opercat}  Let $\sC$ be an operad.  Define a category 
$\hat{\sC}$ by letting its objects be the sets $\mathbf{n}$ for $n\geq 0$ 
and letting its space of morphisms $\mathbf{m}\rtarr\mathbf{n}$ be
\[ \hat{\sC}(\mathbf{m},\mathbf{n}) = 
\coprod_{\ph\in\sF(\mathbf{m},\mathbf{n})}\ \prod_{1\leq j\leq n} 
\sC(|\ph^{-1}(j)|); \]
When $n=0$, this is to be interpreted as a point indexed on the unique
map $\mathbf{m}\rtarr \mathbf{0}$ in $\sF$.  Units and composition are 
induced from the unit $\id\in \sC(1)$ and the operad structure maps $\ga$.
If the $\sC(j)$ are all non-empty, $\hat{\sC}$ is a category of operators.
The inclusion of $\PI$ is obtained by using the points $\ast = \sC(0)$ and 
$\id\in \sC(1)$. The surjection to $\sF$ is induced by the projections 
$\sC(j)\rtarr \ast$. 
\end{defn}

\begin{rem}\label{PN}  There is a unique operad $\sP$ such that 
$\sP(0)$ and $\sP(1)$ are each a point and $\sP(j)$ is empty for $j>1$.  
The category $\hat{\sP}$ is $\PI$.  There is also a unique 
operad $\sN$ such that $\sN(j)$ is a point for all $j\geq 0$.  Its 
algebras are the commutative monoids, and $\hat{\sN} = \sF$.  
\end{rem}

\begin{rem}\label{Q} There is a trivial operad $\sQ\subset \sP$ such 
that $\sQ(0)$ is empty (violating our usual assumption), $\sQ(1)$ is a point,
and $\sQ(j)$ is empty for $j>1$.  The category $\hat{\sQ}$ is $\UP$.  Some of
our definitions and constructions will be described in terms of categories of 
operators, although they also apply to more general categories which 
contain $\UP$ but not $\PI$, or which map to $\sF$ but not surjectively.
\end{rem}

\begin{defn}\label{DTSpec} Let $\sD$ be a category of operators.  A 
$\sD$-space $Y$ in $\sT$
is a continuous functor $\sD\rtarr \sT$, written $\mathbf{n}\mapsto Y_n$. 
It is reduced if $Y_0$ is a point.  It is special if the following three
conditions are satisfied.
\begin{enumerate}[(i)]
\item $Y_0$ is aspherical (equivalent to a point).
\item The maps $\de\colon Y_n\rtarr Y_1^n$ induced by 
the $n$ projections $\de_i\colon \mathbf{n}\rtarr \mathbf{1}$,
$\de_i(j) = \de_{i,j}$, in $\UP$ are equivalences.
\item If $\ph\colon \mathbf{m}\rtarr \mathbf{n}$ is an injection,
then $\ph\colon X_m\rtarr X_n$ is a $\SI_{\ph}$-cofibration.
\end{enumerate}
It is very special if, further, the monoid $\pi_0(Y_1)$ is a group.
A map $f\colon Y\rtarr Z$ of $\sD$-spaces is a continuous natural
transformation. It is an equivalence if each $f_n\colon Y_n\rtarr Z_n$
is an equivalence.
\end{defn}

Except for the ``very special'' notion, the definition applies equally
well if we only require $\UP\subset \sD$ and do not require the map to $\sF$ 
to be a surjection. 

\begin{defn}\label{DTsp} Let $\sD[\sT]$ denote the category of 
$\sD$-spaces in $\sT$.
\end{defn}

An $\sF$-space structure on a $\PI$-space $Y$ encodes products.  The 
canonical map $\ph_n\colon \mathbf{n}\rtarr \mathbf{1}$ that sends 
$j$ to $1$ for $1\leq j\leq n$ prescribes a map $Y_0\rtarr Y_1$ when
$n=0$ and a canonical $n$-fold product 
$Y_n\rtarr Y_1$ when $n>0$.  When $Y$ is special, which is the case of interest, 
this product induces a monoid structure on $\pi_0(Y_1)$, and similarly 
with $\sF$ replaced by a general category of operators.  The cofibration
condition (iii) is minor, and a whiskering construction given in 
\cite[App. B]{MT} shows that it results in no loss of generality: 
given a $Y$ for which the condition fails, we can replace it by an
equivalent $Y'$ for which the condition holds.  In fact, the
need for this condition and for the more complicated analogues used 
in \cite{Mult} will disappear from the picture in the next section. 

For a based space $X$, there is a $\PI$-space
$RX$ that sends $\mathbf{n}$ to the cartesian power $X^n$
and in particular sends $\mathbf{0}$ to a point; $RX$ satisfies
the cofibration condition if the basepoint of $X$ is nondegenerate. 
The category $\PI$ encodes the operations that relate the powers of 
a based space.  The specialness conditions on $Y$ state that its 
underlying $\PI$-space behaves homotopically like $RY_1$.
 
There is an evident functor $L'$ from $\PI$-spaces to based spaces
that sends $Y_n$ to $Y_1$.  It was claimed in \cite[1.3]{MT} that $L'$ 
is left adjoint to $R$, but that is false.  There is a unique map 
$\mathbf{0}\rtarr \mathbf{1}$ in $\PI$, and, since $(RX)_0$ is a point,  naturality with respect 
to this map shows that for any map of $\PI$-spaces $Y\rtarr RX$, 
the map $Y_1\rtarr (RX)_1 = X$ must factor through the quotient $Y_1/Y_0$.  
The left adjoint $L$ to $R$ is rather the functor that sends $Y$ to 
$Y_1/Y_0$. In the applications, $Y$ is often reduced, and we could 
restrict attention to reduced $\sD$-spaces at the price of quotienting
out by $Y_0$ whenever necessary.  

Defining $LY = Y_1/Y_0$, we have the adjunction
\begin{equation}\label{LR1}
\PI[\sT](LY, X)\iso \sT(Y,RX).
\end{equation}
This remains true for special $\PI$-spaces and non-degenerately based spaces. 

There is a two-sided categorical bar construction
\[  B(Y,\sD,X) = |B_*(Y,\sD,X)|, \]
where $\sD$ is a small topological category, $X\colon \sD\rtarr \sT$
is a covariant functor, and $Y\colon \sD\rtarr \sT$ is a contravariant
functor \cite[\S12]{Class}.  If $\sO$ is the set of objects of $\sD$,
then the space of $q$-simplices is
\[ Y\times_{\sO}\sD\times_{\sO} \cdots \times_{\sO}\sD\times_{\sO} X \]
or, more explicitly, the disjoint union over tuples of objects $n_i$ in $\sO$ of
\[ Y_{n_q}\times \sD(n_{q-1},n_q)\times \cdots \times \sD(n_0,n_1)\times X_{n_0}.\]
The faces are given by the evaluation maps of $Y$, composition in $\sD$, and
the evaluation maps of $X$.  The degeneracies are given by insertion of 
identity maps. This behaves just like the analogous two-sided bar 
constructions of \cite[\S\S8-9]{Prequel}, and has the same rationale.  
As there, we prefer to ignore model categorical considerations and use
various bar constructions to deal with change of homotopy categories in
this paper.  The following result is \cite[1.8]{MT}.  When specialized
to $\epz\colon \hat{\sC}\rtarr \sF$, it gives the upper left pair of
parallel arrows in (\ref{DIAG}).

\begin{thm}\label{Dchange}  Let $\nu\colon \sD\rtarr \sE$ be an
equivalence of categories of operators.  When restricted to the full
subcategories of special objects, the pullback of action
functor $\nu^*\colon \sE[\sT]\rtarr \sD[\sT]$ induces an 
equivalence of homotopy categories.
\end{thm}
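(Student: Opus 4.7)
The plan is to construct an essential inverse $\nu_*\colon \sD[\sT]\rtarr \sE[\sT]$ using the two-sided categorical bar construction, and to check that on special objects the resulting natural zigzags consist of equivalences. Define
\[
(\nu_* Y)_{\mathbf{n}}\;=\;B\bigl(\sE(-,\mathbf{n}),\sD,Y\bigr),
\]
where $\sE(-,\mathbf{n})$ is viewed as a contravariant functor on $\sD$ via $\nu$. Functoriality in $\mathbf{n}$ through the first slot (by composition in $\sE$) makes $\nu_* Y$ into an $\sE$-space, and the construction is evidently functorial in $Y$ and preserves levelwise equivalences, so it descends to the homotopy categories.

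Next I would produce two homotopical comparisons. For a $\sD$-space $Y$, the natural zigzag of $\sD$-space maps
\[
Y \;\overfrom{\epz_Y}\; B(\sD,\sD,Y)\;\rtarr\; \nu^*\nu_* Y
\]
has $\epz_Y$ the standard augmentation, a levelwise homotopy equivalence by the usual extra-degeneracy argument, while the right-hand map is obtained by applying $\nu$ to the leftmost factor in each simplicial degree. Since $\nu$ is an equivalence on every hom space and is the identity on objects, the right-hand map is a levelwise equivalence on $q$-simplices, and the $\SI_\ph$-cofibration hypothesis in Definition~\ref{catoper} together with the cofibration condition on $Y$ makes the relevant simplicial spaces proper, so geometric realization preserves these equivalences. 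Dually, for an $\sE$-space $Z$ I would factor the counit as
\[
\nu_*\nu^* Z \;=\; B(\sE,\sD,\nu^* Z)\;\rtarr\;B(\sE,\sE,Z)\;\rtarr\;Z,
\]
where the first map applies $\nu$ to every middle factor and the second is the augmentation; each is an equivalence of $\sE$-spaces by the same pair of arguments.

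Finally I would verify that $\nu_*$ preserves specialness. Since $\nu$ is the identity on $\PI$, and hence on the subcategory $\UP$ that carries the projections $\de_i\colon \mathbf{n}\rtarr\mathbf{1}$ of Definition~\ref{DTSpec}(ii), the projection map $\de$ for $\nu_* Y$ corresponds under the pointwise equivalences $(\nu_* Y)_{\mathbf{m}}\htp Y_{\mathbf{m}}$ from the preceding paragraph to the projection map $\de$ for $Y$; conditions (i) and (iii) transfer by the same mechanism, using the cofibration axiom on $\sE$ for (iii). Combining these steps, $\nu^*$ and $\nu_*$ restrict to functors on the full subcategories of special objects that are linked to the identities by natural zigzags of levelwise equivalences, which gives the claimed equivalence of homotopy categories. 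The main obstacle is that every step depends on the geometric realization of the simplicial bar construction being homotopically well-behaved, which requires each $B_*(-,\sD,-)$ to be a proper simplicial space; this is exactly what the $\SI_\ph$-cofibration axioms on categories of operators and on special $\sD$-spaces are tuned to guarantee, in the spirit of the two-sided bar constructions of \cite[\S\S8-9]{Prequel} and \cite[\S12]{Class}.
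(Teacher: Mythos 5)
Your proposal is correct and follows essentially the same route as the paper's own (sketch) proof: the extension-of-scalars functor $\nu_*Y = B(\sE(-,\mathbf{n}),\sD,Y)$ together with the zigzag $Y \leftarrow B(\sD,\sD,Y) \rightarrow \nu^*\nu_*Y$ and the composite $\nu_*\nu^*Z \rightarrow B(\sE,\sE,Z)\rightarrow Z$, viewed as a bar-resolved unit and counit. The additional details you supply --- levelwise equivalences on $q$-simplices, properness from the cofibration axioms, and preservation of specialness via the identity on $\PI$ --- are exactly the points the paper delegates to \cite[1.8]{MT}.
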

\begin{proof}[Sketch proof] Via $\nu$ and the composition in $\sE$,
each $\sE(-,\mathbf{n})$ is a contravariant functor $\sD\rtarr \sT$; via
the composition of $\sE$, each $\sE(\mathbf{m},-)$ is a covariant 
functor $\sE\rtarr \sT$.  For $Y\in \sD[\sT]$, define 
\[  (\nu_*Y)_n = B(\sE(-,\mathbf{n}),\sD, Y). \]
This gives an ``extension of scalars'' functor $\nu_*\colon \sD\rtarr \sE$.
Notice that $\nu_*Y$ is not reduced even when $Y$ is reduced.
The following diagram displays a natural weak equivalence between 
$Y$ and $\nu^*\nu_* Y$. 
\[ \xymatrix@1{
Y  &  B(\sD,\sD,Y) \ar[rr]^-{B(\nu,\id\id)} 
\ar[l]_-{\epz} & & \nu^* B(\sE,\sD,Y) = \nu^*\nu_*Y. \\} \]
Its left arrow has a natural homotopy inverse $\et$.
Similarly, for $Z\in\sE[\sT]$, the following composite displays a natural
weak equivalence between $\nu_*\nu^* Z$ and $Z$. 
\[ \xymatrix@1{
\nu_*\nu^*Z =B(\sE,\sD,\nu^*Z) \ar[rr]^-{B(\id,\nu,\id)} & &  
B(\sE,\sE,Z) \ar[r]^-{\epz} & Z. \\} \]
The categorically minded reader will notice that these maps should be
viewed as the unit and counit of an adjunction fattened up by the
bar construction.
\end{proof}

While the functor $\nu_*$ takes us out of the subcategory of reduced
objects, we could recover reduced objects by quotienting out $(\nu_*Y)_0$. 
For our present emphasis, all we really care about is the mere existence of 
the functor $\nu^*$, since our goal is to create input for the infinite 
loop space machine that we described in \cite[\S9]{Prequel}.  Thus the distinction is of no great importance.  However, it is thought provoking,
and we show how to eliminate it conceptually in the next section.

\section{Monads associated to categories of operators}\label{MT2}

We are going to change our point of view now, since the change here
in the one operad case will illuminate the more substantial change in the 
two operad case.  We recall the following general and well-known result 
in the form that we gave it in \cite[5.7]{Mult}.  It works in greater 
generality, but the form given there is still our focus here. Since
this by now should be standard category theory known by all algebraic
topologists, we shall not elaborate the details.  We usually write 
$\mu$ and $\et$ generically for the product and unit of monads.

\begin{con}\label{construct}  Let $\sD$ be a topological category 
and let $\XI$ be 
a topologically discrete subcategory with the same objects.  Let $\XI[\sU]$ denote
the category of $\XI$-spaces (functors $\XI\rtarr \sU$) and let 
$\sD[\sU]$ denote the category of $\sD$-spaces (continuous 
functors $\sD\rtarr \sU$). We construct a monad $D$ in 
$\XI[\sU]$ such that $\sD[\sU]$ is isomorphic to the category of 
$D$-algebras in $\XI[\sU]$.  For an object $\mathbf{n}\in \XI$ and a
$\XI$-space $Y$, $(DY)_n$ is the categorical tensor product (or left
Kan extension)
\[  \sD(-,\mathbf{n})\otimes_{\XI} Y. \] 
More explicitly, it is the coequalizer displayed in the diagram
\[\xymatrix@1{
\coprod_{\ph\colon \mathbf{q}\to \mathbf{m}}\sD(\mathbf{m},\mathbf{n})\times Y_q \ar@<.5ex>[r] \ar@<-.5ex>[r] & 
\coprod_{\mathbf{m}} \sD(\mathbf{m},\mathbf{n})\times Y_m 
\ar[r] & \sD(-,\mathbf{n})\otimes_{\XI} Y,\\} \]
where the parallel arrows are given by action maps 
$\XI(\mathbf{q},\mathbf{m})\times Y_q\rtarr Y_m$ and composition maps 
$\sD(\mathbf{m},\mathbf{n})\times \XI(\mathbf{q},\mathbf{m})\rtarr 
\sD(\mathbf{q},\mathbf{n})$. Then
$DY$ is a $\sD$-space (and in particular a $\XI$-space) that extends the 
$\XI$-space $Y$. 
If $Y$ is a $\sD$-space, the inclusion of $\XI$ in $\sD$ induces a map
$DY\rtarr \sD\otimes_{\sD} Y\iso Y$ that gives $Y$ a structure of $D$-algebra,
and conversely. 
\end{con}

The point to be emphasized is that we can use varying subcategories $\XI$
of the same category $\sD$, giving monads on different categories that have
isomorphic categories of algebras.  In the previous section, we considered
a category of operators $\sD$ and focused on $\XI = \PI$.  Then 
Construction \ref{construct} gives the monad $D$ on the category $\PI[\sT]$ that was 
used in \cite{MT}.  In particular, when $\sD = \hat{\sC}$, it gives the monad 
denoted $\hat{C}$ there.  We think of these as reduced monads. Their construction involves the injections in $\PI$, which encode basepoint identifications. 

However, it greatly simplifies the theory here if, when 
constructing a monad associated to a category of operators $\sD$, we 
switch from $\PI$ to its subcategory $\UP$ of projections and so eliminate 
the need for basepoint identifications
corresponding to injections. We emphasize that we do not change $\sD$, 
so that we still insist that it contains $\PI$.  With this switch, 
Construction \ref{construct} specializes to give an ``augmented''
monad $D_+$ on the category $\UP[\sU]$.  In particular, when 
$\sD = \hat{\sC}$, it gives a monad $\hat{C}_+$.  The following definitions
and results show that we are free to use $D_+$ instead of $D$ for our present
purposes; compare Remark \ref{ptofview} below.

\begin{defn} Let $\sD$ be a category of operators.  A $\sD$-space $Y$ in 
$\sU$ is a continuous functor $\sD\rtarr \sU$, written $\mathbf{n}\mapsto Y_n$. 
It is reduced if $Y_0$ is a point.  It is special if the following two
conditions are satisfied.
\begin{enumerate}[(i)]
\item $Y_0$ is aspherical (equivalent to a point).
\item The maps $\de\colon Y_n\rtarr Y_1^n$ induced by 
the $n$ projections $\de_i\colon \mathbf{n}\rtarr \mathbf{1}$,
$\de_i(j) = \de_{i,j}$, are equivalences.
\end{enumerate}
It is very special if, further, the monoid $\pi_0(Y_1)$ is a group.
A map $f\colon Y\rtarr Z$ of $\sD$-spaces is a continuous natural
transformation. It is an equivalence if each $f_n\colon Y_n\rtarr Z_n$
is an equivalence.
\end{defn} 

\begin{defn}\label{DUsp} Let $\sD[\sU]$ 
denote the category of $\sD$-spaces in $\sU$.
\end{defn}

For purposes of comparison, we temporarily adopt the following notations 
for the categories of algebras over the two monads that are obtained from 
$\sD$ by use of Construction \ref{construct}. 

\begin{defn}\label{DUsp2} Let $\sD$ be a category of operators.
\begin{enumerate}[(i)]
\item Let $D_+[\UP,\sU]$ denote the category of algebras
over the monad on $\UP[\sU]$ associated to $\sD$.
\item Let $D[\PI,\sT]$ denote the category of algebras over the monad 
on $\PI[\sT]$ associated to $\sD$.
\end{enumerate}
\end{defn}

In fact, we have two other such categories of algebras over monads in sight.  
One is $D_+[\UP,\sT]$, which is isomorphic to $D[\PI,\sT]$ and to
the category of $\sD$-algebras in $\sT$.  The other is
$D[\PI,\sU]$, which is isomorphic to $D_+[\UP,\sU]$ and to
the category of $\sD$-algebras in $\sU$.   

The situation here is very much like that discussed in 
\cite[\S4]{Prequel}.  If we have an action of $\sD$ on a $\UP$-space
$Y$, then the maps $\mathbf{0}\rtarr \mathbf{n}$ of $\PI\subset \sD$,
together with a choice of basepoint in $Y_0$, give the spaces $Y_n$ 
basepoints.  The injections in $\PI$ also give the unit properties 
of the products on a $\sD$-space $Y$.  Using $\UP$ and $\sU$  
rather than $\PI$ and $\sT$ means that we are not taking the basepoints 
of the $Y_n$ and the analogues of insertion of basepoints induced by the 
injections in $\PI$ as preassigned.  

The following result is analogous to \cite[4.4]{Prequel}. 

\begin{prop}\label{isocatsToo} Let $\sD$ be a category of operators,
such as $\hat{\sC}$ for an operad $\sC$.  Consider the following four categories. 
\begin{enumerate}[(i)]
\item The category $\sD[\sU]$ of $\sD$-spaces in $\sU$.
\vspace{1mm}
\item The category $D_+[\UP,\sU]$ of $D_+$-algebras in $\UP[\sU]$.
\vspace{1mm}
\item The category $\sD[\sT]$ of $\sD$-spaces in $\sT$.
\vspace{1mm}
\item The category $D[\PI,\sT]$ of $D$-algebras in $\PI[\sT]$.
\end{enumerate}
The first two are isomorphic and the last two are isomorphic.  When 
restricted to reduced objects ($Y_0=\ast$), all four are isomorphic.  
In general, the forgetful functor sends $\sD[\sT]$ isomorphically 
onto the subcategory of $\sD[\sU]$ that is obtained by preassigning 
basepoints to $0^{th}$ spaces $Y_0$ and therefore to all spaces $Y_n$.
\end{prop}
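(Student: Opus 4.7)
The proof will proceed by unwinding definitions in three stages, tracking how basepoints are propagated through the inclusions $\UP\subset \PI\subset \sD$.

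First, to establish the isomorphism (i)$\iso$(ii), I invoke Construction \ref{construct} with $\XI = \UP$. By the universal property of the coequalizer defining $(D_+Y)_n$, together with the unit and associativity axioms for the monad $D_+$, a $D_+$-algebra structure on a $\UP$-space $Y$ is the same data as a continuous extension of the $\UP$-diagram $Y$ to a continuous functor $\sD\rtarr \sU$, and a morphism of $D_+$-algebras is then precisely a natural transformation of such extensions. This gives the desired isomorphism of categories on the nose. The isomorphism (iii)$\iso$(iv) is established by applying the same argument to $\XI = \PI$ with ground category $\sT$ in place of $\sU$.

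Second, I compare the two pairs (i)/(ii) and (iii)/(iv) at the reduced level. A reduced $\sD$-space in $\sU$ has $Y_0 = \ast$, so since $\mathbf{0}$ is initial in $\PI\subset \sD$, the structure map $\mathbf{0}\rtarr \mathbf{n}$ produces a canonical distinguished point in each $Y_n$, and all morphisms in $\sD$ preserve these distinguished points by continuity and functoriality. Since a reduced $\sD$-space in $\sT$ has its basepoints determined in exactly the same manner, the data carried by a reduced object in $\sU$ and by a reduced object in $\sT$ coincide, and morphisms in either ambient category are forced to preserve the canonical distinguished points. Hence all four categories agree when restricted to reduced objects.

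Third, for the general (non-reduced) comparison via the forgetful functor, I observe that an object of $\sD[\sT]$ is the same as an object $Y$ of $\sD[\sU]$ equipped with a chosen basepoint $y_0 \in Y_0$ such that every structure map preserves the induced basepoints, which on $Y_n$ are forced to be the image of $y_0$ under the map $\mathbf{0}\rtarr \mathbf{n}$ in $\PI\subset \sD$. Conversely, any such choice of $y_0\in Y_0$ upgrades an object of $\sD[\sU]$ to one of $\sD[\sT]$ by transporting $y_0$ along the initial maps out of $\mathbf{0}$, and morphisms in $\sD[\sT]$ are exactly the morphisms of $\sD[\sU]$ that carry the chosen basepoint of $Y_0$ to that of $Z_0$. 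This identifies $\sD[\sT]$ with the indicated subcategory of $\sD[\sU]$. The only subtlety, and the step requiring the most care, is the verification that once $y_0$ is chosen, no further basepoint data need be specified on the $Y_n$ for $n\geq 1$, but this is immediate from the fact that $\mathbf{0}$ is initial in $\PI$ (see Definition \ref{catoper}) and the analysis in \cite[4.4]{Prequel}, which is exactly parallel.
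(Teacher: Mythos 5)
Your proposal is correct and follows essentially the same route as the paper, which establishes the isomorphisms (i)$\iso$(ii) and (iii)$\iso$(iv) by direct appeal to Construction \ref{construct} and handles the reduced and general comparisons by the same basepoint-chasing along the maps $\mathbf{0}\rtarr \mathbf{n}$ of $\PI\subset\sD$ described in the surrounding discussion. The one point glossed over (by you and by the paper alike) is that ``functoriality'' forces an arbitrary morphism of $\sD$ to preserve the induced basepoints only because $\mathbf{0}$ is in fact initial in $\sD$ itself, i.e.\ $\sD(\mathbf{0},\mathbf{n})$ is a single point --- which holds for $\hat{\sC}$ since $\sC(0)=\ast$.
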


We have the analogue of Theorem \ref{Dchange}, with the same proof.

\begin{thm}\label{Dchange2}  Let $\nu\colon \sD\rtarr \sE$ be an
equivalence of categories of operators.  When restricted to the full
subcategories of special objects, the pullback of action
functor $\nu^*\colon \sE[\sU]\rtarr \sD[\sU]$ induces an equivalence
of homotopy categories.
\end{thm}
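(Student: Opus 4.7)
The plan is to follow the sketch proof of Theorem \ref{Dchange} essentially verbatim, transposing it to the unbased world. First I would construct an extension of scalars functor $\nu_*\colon \sD[\sU]\rtarr \sE[\sU]$ by setting
\[ (\nu_*Y)_n \;=\; B\bigl(\sE(-,\mathbf{n}),\,\sD,\,Y\bigr), \]
where $\sE(-,\mathbf{n})$ is viewed as a contravariant functor $\sD\rtarr \sU$ via $\nu$ and composition in $\sE$, and the covariant dependence on $\mathbf{n}$ through composition in $\sE$ is what equips $\nu_*Y$ with its $\sE$-space structure. Unlike in Theorem \ref{Dchange}, there is now no normalization or quotienting needed to keep track of basepoints, since $\sU$ is the category of unbased spaces and the $\UP$-structure replaces $\PI$.

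Next I would exhibit the natural weak equivalences on the two sides. For $Y\in\sD[\sU]$, use the zigzag
\[ \xymatrix@1{ Y & B(\sD,\sD,Y)\ar[l]_-{\epz} \ar[rr]^-{B(\nu,\id,\id)} & & B(\sE,\sD,Y) \,=\, \nu^*\nu_*Y; \\} \]
the augmentation $\epz$ has the usual simplicial homotopy inverse $\et$ coming from the extra degeneracy, and the right-hand map is a levelwise weak equivalence because $\nu$ is such and the geometric realization of simplicial spaces preserves levelwise equivalences (here the requisite cofibration hypotheses are automatic, as I note below). For $Z\in\sE[\sU]$, the composite
\[ \xymatrix@1{ \nu_*\nu^*Z \,=\, B(\sE,\sD,\nu^*Z) \ar[rr]^-{B(\id,\nu,\id)} & & B(\sE,\sE,Z) \ar[r]^-{\epz} & Z \\} \]
does the same job in the opposite direction. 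These zigzags are the unit and counit of an adjunction fattened up by the bar construction, and they suffice to induce the claimed equivalence of homotopy categories once restricted to special objects, since the specialness conditions of (i) asphericity of $Y_0$ and (ii) the Segal maps $Y_n\rtarr Y_1^n$ being equivalences are invariant under levelwise weak equivalence.

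I expect the only point requiring attention, rather than an obstacle, to be verifying that the bar construction is homotopically well-behaved in this ground setting. In the framework of Theorem \ref{Dchange} this was encoded by the $\SI_\ph$-cofibration condition on injections in Definition \ref{catoper} and the parallel condition on $\sD$-spaces. Here, however, we have replaced $\PI$ by $\UP$, and the morphism spaces $\sD(\mathbf{m},\mathbf{n})$ still split as disjoint unions indexed by $\sF(\mathbf{m},\mathbf{n})$, so inclusions of summands are automatically cofibrations; and working in $\sU$ removes the awkward basepoint-insertion cofibrations on the $Y_n$ altogether. Consequently the proof of \cite[1.8]{MT} transcribes directly, and if anything becomes slightly cleaner.
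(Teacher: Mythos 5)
Your proposal is exactly the paper's intended argument: the paper proves Theorem \ref{Dchange2} simply by invoking the proof of Theorem \ref{Dchange} verbatim, i.e.\ the bar-construction extension of scalars $(\nu_*Y)_n = B(\sE(-,\mathbf{n}),\sD,Y)$ together with the two zigzags of natural weak equivalences, transposed to $\sU$. Your added remark that the cofibration and basepoint issues become automatic in the unbased setting is consistent with the paper's own discussion, so the proposal is correct and follows the same route.
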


\section{The comparison between $\sC$-spaces and $\hat{\sC}$-spaces}\label{MT3}

To begin with, let us abbreviate notations from the previous section. 
Let us write $\sV = \UP[\sU]$ for the category of $\UP$-spaces.  This category 
plays a role analogous to $\sU$.  We then write $D[\sV]=D_+[\UP,\sU]$ 
for a category of operators $\sD$.  From here on out, we shall always use augmented monads
rather than reduced ones, and we therefore
drop the $+$ from the notations.  This conflicts with usage in the prequel 
\cite{Prequel} and in all previous work in this area, but hopefully will 
not cause confusion here.  
{\em We will never work in a based context in the rest of this paper.}

Now specialize to $\sD = \hat{\sC}$. Our change of perspective simplifies the passage from $\hat{C}$-spaces to $\sC$-spaces of \cite[\S5]{MT}.  
For an unbased space $X$, define $(RX)_n = X^n$, with the evident 
projections.  For an $\UP$-space $Y$, define $LY = Y_1$.  
Since we have discarded the injection $\mathbf{0}\rtarr \mathbf{1}$ in $\PI$, 
there is no need to worry about the distinction between reduced and 
unreduced $\UP$-spaces, and we have the adjunction
\begin{equation}\label{LR2}
\sV(LY, X)\iso \sU(Y,RX).
\end{equation}
Here the counit of the adjunction is the identity transformation 
$LR \rtarr \Id$, and the unit $\de\colon Y\rtarr RLY$ is given by the maps
$\de\colon Y_n\rtarr Y_1^n$. 
The first of the following observations is repeated from \cite[5.2--5.4]{MT}, and 
the second follows by inspection.  The reader may wish to
compare the second with the analogous but more complicated result 
\cite[5.5]{MT}, which used $\PI$ and $\sT$ instead of $\UP$ and $\sU$. 

\begin{notns}\label{effect}  A morphism $\ps$ in $\sF$ is effective if 
$\ps^{-1}(0) = 0$; thus the effective morphisms in $\PI$ are the injections,
including the injections $0\colon \mathbf{0}\rtarr \mathbf{n}$ for $n\geq 0$.  
An effective morphism $\ps$ is ordered if $\ps(i) < \ps(j)$ implies $i < j$. Let 
$\sE\subset \sF$ denote the subcategory of objects $\{\mathbf{n}\}$ and ordered 
effective morphisms $\ps$.  
\end{notns}

\begin{lem}\label{describe}  Any morphism $\ph$ in $\sF$ factors as a composite $\ps\com \pi$, where $\pi$ is a projection and $\ps$ is effective, uniquely up to a permutation of the source of $\psi$. If 
$\ps\colon \mathbf{m}\rtarr \mathbf{n}$ is effective, there 
is a permutation $\ta\in \SI_m$ such that $\ps\com\ta$ is ordered. If $\ps$ is ordered, then $\ps\com\ta$ is also ordered if and only if 
$\ta\in \SI(\ps)\subset \SI_m$, where 
$\SI(\ps) = \SI_{r_1}\times\cdots \times \SI_{r_n}$, $r_j = |\ps^{-1}(j)|$.
\end{lem}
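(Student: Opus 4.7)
The plan is to prove all three assertions by direct canonical constructions from the structure of a based function in $\sF$.

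For the factorization statement, given $\ph\colon \mathbf{m}\rtarr \mathbf{n}$ in $\sF$, I would let $S = \ph^{-1}(\{1,\dots,n\}) \subseteq \{1,\dots,m\}$ and set $k = |S|$. Writing the elements of $S$ in their natural order as $s_1 < s_2 < \cdots < s_k$, define $\pi\colon \mathbf{m}\rtarr\mathbf{k}$ by $\pi(s_i) = i$ and $\pi(j)=0$ for $j \in \mathbf{m}\setminus S$, and define $\ps\colon\mathbf{k}\rtarr\mathbf{n}$ by $\ps(i) = \ph(s_i)$. Then $|\pi^{-1}(i)|=1$ for $1 \leq i \leq k$, so $\pi$ is a projection; $\ps^{-1}(0)=\{0\}$, so $\ps$ is effective; and by construction $\ph = \ps\com\pi$. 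For uniqueness, any such factorization $\ph = \ps'\com\pi'$ with $\ps'$ effective forces $(\pi')^{-1}(\{1,\dots,k'\}) = S$, since $\ps'$ kills only $0$, so $k' = k$. The two bijections $S \iso \{1,\dots,k\}$ induced by $\pi$ and $\pi'$ then differ by a unique permutation $\si\in \SI_k$, yielding $\pi' = \si\com\pi$ and $\ps' = \ps\com\si^{-1}$, which is the required uniqueness up to permutation of the source of $\ps$.

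For the second assertion, I would simply sort the domain by image values. Since $\ps^{-1}(0) = \{0\}$, the restriction of $\ps$ to $\{1,\dots,m\}$ takes values in $\{1,\dots,n\}$, and one chooses any $\ta\in\SI_m$ such that the sequence $\ps(\ta(1)), \ps(\ta(2)), \dots, \ps(\ta(m))$ is nondecreasing (breaking ties arbitrarily among elements of a common preimage). Such a $\ta$ exists by the standard sorting of a finite sequence of integers. By construction, $(\ps\com\ta)(i) < (\ps\com\ta)(j)$ forces $i < j$, so $\ps\com\ta$ is ordered.

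For the third assertion, if $\ps$ is ordered and $r_j = |\ps^{-1}(j)|$, then $\ps^{-1}(j)$ is the contiguous block $B_j = \{r_1 + \cdots + r_{j-1} + 1, \dots, r_1 + \cdots + r_j\}$ in $\{1,\dots,m\}$. The condition that $\ps\com\ta$ is ordered is equivalent to the sequence $\ps(\ta(1)), \dots, \ps(\ta(m))$ being nondecreasing; since this sequence is a permutation of the multiset in which $j$ appears with multiplicity $r_j$, it is nondecreasing if and only if $\ta$ carries each $B_j$ to itself, that is, $\ta \in \SI_{r_1}\times\cdots\times\SI_{r_n} = \SI(\ps)$. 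The converse is immediate since then $\ps\com\ta = \ps$, trivially ordered. There is no real obstacle; the main care needed is bookkeeping of the basepoint $0$ when verifying the claimed properties of $\pi$ and $\ps$ in $\sF$, together with recognizing that ``ordered'' allows equality (ties are permitted), which lets the stabilizer argument for the last assertion go through cleanly.
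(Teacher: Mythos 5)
Your proof is correct, and it is the expected argument: the paper itself gives no proof of this lemma (it is quoted from May--Thomason \cite[5.2--5.4]{MT} as an elementary observation), and your direct construction of $\pi$ and $\ps$ from $S=\ph^{-1}(\{1,\dots,n\})$, the sorting argument for existence of $\ta$, and the contiguous-block/stabilizer argument identifying $\SI(\ps)$ are precisely the standard verifications. All the delicate points (effectiveness forcing $(\pi')^{-1}(\{1,\dots,k'\})=S$, surjectivity of $\pi$ giving $\ps'=\ps\com\si^{-1}$, and the fact that ``ordered'' means nondecreasing so ties within a fiber are allowed) are handled correctly.
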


\begin{lem}\label{5.5} For an $\UP$-space $Y$, $(\hat{C}Y)_0 = Y_0$ and, for
$n\geq 1$,
\[ (\hat{C}Y)_n = \coprod_{\ps\in\sE(\mathbf{m},\mathbf{n})}\ 
(\prod_{1\leq j\leq n}\sC(|\ps^{-1}(j)|) )\times_{\SI(\ps)}Y_m. \]
\end{lem}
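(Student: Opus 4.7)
The plan is to unravel the coequalizer definition of $\hat{C}Y = \hat{\sC}(-,\mathbf{n})\otimes_{\UP}Y$ supplied by Construction~\ref{construct} and then to use the factorization result of Lemma~\ref{describe} to index the result by ordered effective morphisms.

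First I would dispose of the case $n=0$: by definition $\hat{\sC}(\mathbf{m},\mathbf{0})$ is a point for every $m$, and the projections $\mathbf{q}\rtarr\mathbf{m}$ in $\UP$ identify all the summands $\{*\}\times Y_m$ in the coequalizer with their image in $Y_0$ via the action $Y_m\rtarr Y_0$ given by the terminal map $\mathbf{m}\rtarr\mathbf{0}$ of $\UP$. (Note that this really uses $\UP$ rather than $\PI$: there is no map $\mathbf{0}\rtarr\mathbf{m}$ to identify along.) This yields $(\hat{C}Y)_0\iso Y_0$.

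For $n\geq 1$, the goal is to pick canonical representatives in the coequalizer. Using Lemma~\ref{describe}, any $\ph\in\sF(\mathbf{q},\mathbf{n})$ factors as $\ps\com\pi$ where $\pi\colon\mathbf{q}\rtarr\mathbf{m}$ is a projection (hence a morphism of $\UP$) and $\ps\colon\mathbf{m}\rtarr\mathbf{n}$ is ordered effective. Since $\pi$ is a bijection above $0$ and $\ps$ is effective, $|\ph^{-1}(j)|=|\ps^{-1}(j)|$ for $1\leq j\leq n$, so the operad data indexed on $\ph$ coincides with that indexed on $\ps$. Inside $\hat{\sC}$ the element $\pi$ carries the trivial operad data $(\id,\ldots,\id)$, and composition in $\hat{\sC}$ therefore identifies $((\ps,\mathbf{c}))\com\pi$ with $(\ph,\mathbf{c})$. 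The coequalizer relation $(d\com\pi,y)\sim (d,\pi_*y)$ with $d=(\ps,\mathbf{c})$ thus rewrites every generator $((\ph,\mathbf{c}),y)$ in $\hat{\sC}(\mathbf{q},\mathbf{n})\times Y_q$ as $((\ps,\mathbf{c}),\pi_*y)\in \hat{\sC}(\mathbf{m},\mathbf{n})\times Y_m$ indexed by an ordered effective $\ps$. This shows the stated summands surject onto $(\hat{C}Y)_n$.

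It remains to identify the remaining identifications. Any two factorizations $\ph=\ps\com\pi=\ps'\com\pi'$ with $\ps,\ps'$ ordered effective differ by a permutation $\ta\in\SI_m$ with $\ps'=\ps\com\ta^{-1}$ and $\pi'=\ta\com\pi$; by the second clause of Lemma~\ref{describe}, $\ps\com\ta^{-1}$ is ordered iff $\ta\in\SI(\ps)$, so such $\ta$ runs over $\SI(\ps)$. Applying the coequalizer relation to the permutation $\ta\in\UP(\mathbf{m},\mathbf{m})$ (with its trivial operad data) identifies $((\ps,\mathbf{c}),\ta_*y)$ with $((\ps\com\ta,\ta^*\mathbf{c}),y)$, where the $\SI(\ps)$-action on $\mathbf{c}$ is the block-permutation action of $\SI_{r_1}\times\cdots\times\SI_{r_n}$ on $\prod_j\sC(r_j)$. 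This is exactly the Borel construction $\times_{\SI(\ps)}$ appearing in the statement, and one checks no further identifications occur because the coequalizer relation, once reduced to ordered effective representatives, involves only the subgroup $\SI(\ps)$ of $\UP$-automorphisms preserving $\ps$. Combining these steps gives the claimed formula.

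The main obstacle is the bookkeeping in the third paragraph: one must verify that the operad composition $\ga$ governing composition in $\hat{\sC}$ against the permutation $\ta$ exactly reproduces the standard $\SI(\ps)$-action on $\prod_{j}\sC(r_j)$, and that different choices of $\ps\com\pi$ factorizations give rise to no identifications beyond this action. Both are straightforward from the definition of $\hat{\sC}$ and the equivariance axioms of the operad $\sC$, but they are where the argument has content; everything else is formal reindexing of the coequalizer.
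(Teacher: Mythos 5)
Your argument is correct and is essentially the paper's own: the paper gives no proof beyond "follows by inspection," and the intended inspection is exactly your unwinding of the coequalizer from Construction \ref{construct} over $\UP$ together with the factorization and $\SI(\ps)$-statements of Lemma \ref{describe}, with the absence of injections making the $n=0$ case and the absence of basepoint identifications automatic. The only imprecision is that composing $(\ps;\mathbf{c})$ with a projection $(\pi;\id)$ in $\hat{\sC}$ returns $(\ph;\mathbf{c}\cdot\si)$ for a block permutation $\si$ coming from Notations \ref{details}, not literally $(\ph;\mathbf{c})$, but since this twist is a homeomorphism it is harmless and is absorbed into the $\SI(\ps)$-bookkeeping you already describe.
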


The following analogues of \cite[5.6 -- 5.8]{MT} are now easy.  Since we have
performed no gluings along injections, at the price of retaining factors 
$\sC(0)$ in the description of $\hat{C}Y$, no cofibration conditions are required.

\begin{lem}\label{5.6}  
Assume that $\sC$ is $\SI$-free, in the sense that each $\sC(j)$ is
$\SI_j$-free.  If $f\colon Y\rtarr Y'$ is an equivalence of $\UP$-spaces, then
so is $\hat{C}f$.  
\end{lem}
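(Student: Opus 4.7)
The plan is to reduce to the explicit description in Lemma \ref{5.5} and then apply the standard fact that free actions by discrete groups send equivariant equivalences to equivalences after passing to orbits.

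First I would unpack the formula of Lemma \ref{5.5}. The map $\hat{C}f$ decomposes, in each degree $n\geq 1$, as a coproduct over $\ps\in \sE(\mathbf{m},\mathbf{n})$ of the maps
\[ \bigl(\prod_{1\leq j\leq n}\sC(|\ps^{-1}(j)|)\bigr)\times_{\SI(\ps)} f_m \colon
\bigl(\prod_{j}\sC(r_j)\bigr)\times_{\SI(\ps)} Y_m
\rtarr
\bigl(\prod_{j}\sC(r_j)\bigr)\times_{\SI(\ps)} Y'_m, \]
with $r_j=|\ps^{-1}(j)|$, while in degree $0$ the map is just $f_0$. Since a disjoint union of equivalences is an equivalence, it suffices to show that each of these summand maps is an equivalence. (In degree $0$ this is immediate.)

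Next I would exploit the $\SI$-freeness hypothesis. By assumption each $\sC(r_j)$ is $\SI_{r_j}$-free, hence $\prod_{j}\sC(r_j)$ is a free $\SI(\ps) = \SI_{r_1}\times\cdots\times\SI_{r_n}$-space. The action of $\SI(\ps)\subset \SI_m \subset \UP(\mathbf{m},\mathbf{m})$ on $Y_m$ and $Y'_m$ is induced via the $\UP$-structure, and $f_m$ is $\SI_m$-equivariant because $f$ is a map of $\UP$-spaces. Thus the map
\[ \bigl(\prod_{j}\sC(r_j)\bigr)\times f_m\colon \bigl(\prod_{j}\sC(r_j)\bigr)\times Y_m \rtarr \bigl(\prod_{j}\sC(r_j)\bigr)\times Y'_m \]
is a $\SI(\ps)$-equivariant equivalence (product with a fixed space preserves weak equivalences).

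Finally I would invoke the standard fact that if $G$ is a discrete group acting freely on a space $E$ and acting on spaces $A$ and $B$, then a $G$-equivariant equivalence $A\to B$ induces an equivalence $E\times_G A \to E\times_G B$: the quotient maps $E\times A \to E\times_G A$ and $E\times B \to E\times_G B$ are covering projections (free discrete quotients), hence fibrations with discrete fibers, and one applies the long exact sequence of homotopy groups together with the five-lemma, or equivalently observes that $E\times_G(-)$ is a homotopy colimit whenever $E$ is $G$-free. Applying this with $G=\SI(\ps)$ and $E=\prod_j\sC(r_j)$ finishes each summand, and hence the lemma.

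The main (mild) obstacle is the step asserting that free discrete quotients preserve equivalences; this is where $\SI$-freeness of $\sC$ is used in an essential way, and without it the result fails. Everything else is bookkeeping from the explicit coproduct decomposition of $\hat{C}Y$.
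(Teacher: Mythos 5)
Your proof is correct and is exactly the argument the paper intends: it states Lemma \ref{5.6} without proof, remarking only that it is ``now easy'' from the explicit coproduct description in Lemma \ref{5.5}, and the intended details are precisely your reduction to the summands $(\prod_j\sC(r_j))\times_{\SI(\ps)}f_m$ followed by the standard covering-space (free discrete quotient) argument. The only point worth making explicit is that the finite group $\SI(\ps)$ acts freely on the product $\prod_j\sC(r_j)$ and hence diagonally freely on $(\prod_j\sC(r_j))\times Y_m$, so the orbit projections are genuine covering maps; you have this.
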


Recall the monad $C^{\sU}_+$ on $\sU$ from \cite[4.1]{Prequel}. In line with
the conventions at the beginning of this section, we abbreviate notation to $C$ in
this paper, so that
\begin{equation}\label{CX} 
CX = \coprod_{m\geq 0} \sC(m)\times_{\SI_m} X^m.
\end{equation}
Here and below, we must remember that the empty product of spaces is
a point.  For $m\geq 0$, $\ph_m$ is the unique effective morphism 
$\mathbf{m}\rtarr \mathbf{1}$ (which is automatically ordered), and 
the following result is clear.

\begin{lem}\label{5.7}  
Let $X\in\sU$.  Then $L\hat{C} RX \equiv (\hat{C}RX)_1 = CX$,
and the natural map $\de\colon \hat{C}RX \rtarr RL\hat{C}RX =RCX$ 
is an isomorphism.
\end{lem}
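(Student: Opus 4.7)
The plan is to unfold the formula from Lemma \ref{5.5} and observe that when $Y = RX$ the $\SI(\ps)$-action on $Y_m$ respects the block decomposition prescribed by $\ps$, so that the coproduct over ordered effective morphisms $\ps\colon \mathbf{m}\rtarr \mathbf{n}$ splits as the $n$-fold cartesian product of the coproduct over $\ps\colon \mathbf{m}\rtarr \mathbf{1}$.

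First I would handle the case $n=1$. An ordered effective morphism $\ps\colon \mathbf{m}\rtarr\mathbf{1}$ is forced to be the unique such map, with $|\ps^{-1}(1)|=m$ and $\SI(\ps)=\SI_m$. Since $(RX)_m = X^m$, Lemma \ref{5.5} immediately gives
\[ (\hat{C}RX)_1 = \coprod_{m\geq 0} \sC(m)\times_{\SI_m} X^m = CX, \]
matching (\ref{CX}). This establishes $L\hat{C}RX = CX$.

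For general $n\geq 1$, I would use Lemma \ref{describe}: an ordered effective morphism $\ps\colon \mathbf{m}\rtarr \mathbf{n}$ is determined by the tuple $(r_1,\ldots,r_n)$ with $r_j = |\ps^{-1}(j)|$ and $m = r_1+\cdots+r_n$, and $\SI(\ps) = \SI_{r_1}\times\cdots\times \SI_{r_n}$. Then $(RX)_m = X^m = X^{r_1}\times\cdots\times X^{r_n}$ with $\SI(\ps)$ acting factorwise, so
\[ \Bigl(\prod_{j=1}^n \sC(r_j)\Bigr) \times_{\SI(\ps)} X^m \iso \prod_{j=1}^n \bigl(\sC(r_j)\times_{\SI_{r_j}} X^{r_j}\bigr). \]
Plugging this into Lemma \ref{5.5} and using the distributivity of finite products over coproducts in $\sU$, the coproduct over $(r_1,\ldots,r_n)$ rearranges as $\prod_{j=1}^n \coprod_{r_j\geq 0}\sC(r_j)\times_{\SI_{r_j}} X^{r_j} = (CX)^n = (RCX)_n$.

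Finally I would verify that the isomorphism just constructed is indeed the unit $\de\colon \hat{C}RX \rtarr RL\hat{C}RX$, by tracing how the $i$-th projection $\de_i\colon \mathbf{n}\rtarr\mathbf{1}$ acts on the $(r_1,\ldots,r_n)$-summand: under functoriality in $\hat{\sC}$, $\de_i$ collapses all factors $\sC(r_j)$ with $j\neq i$ to the point $\sC(0)\cdot\ast$ (projecting away the corresponding $X^{r_j}$'s) and retains the $i$-th factor $\sC(r_i)\times_{\SI_{r_i}} X^{r_i}$. This is exactly the $i$-th projection in $(CX)^n$, so the assembled map $\de$ agrees with the isomorphism above. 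The only potential obstacle is the distributivity rearrangement, which is unambiguous in $\sU$, and the bookkeeping matching $\de$ to projection onto the factor indexed by $i$; both are routine.
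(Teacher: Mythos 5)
Your proposal is correct and is essentially the paper's own argument: the paper simply notes that $\ph_m$ is the unique (automatically ordered) effective morphism $\mathbf{m}\rtarr \mathbf{1}$ and declares the result clear from Lemma \ref{5.5}, and your unfolding of the general-$n$ case via the block decomposition $\SI(\ps)=\SI_{r_1}\times\cdots\times\SI_{r_n}$ and distributivity of products over coproducts is exactly the intended verification. The only minor imprecision is in your description of the action of $\de_i$: the factors $\sC(r_j)$ for $j\neq i$ are not collapsed to $\sC(0)$ but simply drop out because $\de_i^{-1}(1)=\{i\}$, while the composite $\de_i\com\ps$ factors through the projection $\mathbf{m}\rtarr \mathbf{r_i}$, which in the coequalizer over $\UP$ becomes the projection $X^m\rtarr X^{r_i}$ — this is the bookkeeping you correctly flag as routine.
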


\begin{lem}\label{5.8} Assume that $\sC$ is $\SI$-free.  If $Y$ is a special 
$\UP$-space, then so is $\hat{C}Y$, hence $\hat{C}$ restricts to a monad 
on the category of special $\UP$-spaces.
\end{lem}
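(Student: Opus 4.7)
The plan is to reduce to the trivial case by comparing $\hat{C}Y$ with the $\UP$-space $RCY_1$, which is visibly special since every projection map $\de_n\colon (RCY_1)_n = (CY_1)^n \rtarr (CY_1)^n$ is an identity (or a reordering). I would build the zigzag
\[
\hat{C}Y \xrightarrow{\hat{C}\de} \hat{C}RY_1 \xrightarrow{\iso} RCY_1,
\]
where $\de\colon Y\rtarr RY_1$ is the unit of the adjunction $(L,R)$ from (\ref{LR2}) and the second arrow is the isomorphism of Lemma \ref{5.7}. The central observation is that $Y$ being special is exactly the statement that each $\de_n\colon Y_n\rtarr Y_1^n$ is an equivalence, i.e.\ that $\de$ is a levelwise equivalence of $\UP$-spaces. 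The $\SI$-freeness hypothesis on $\sC$ then lets me invoke Lemma \ref{5.6} to upgrade this to an equivalence $\hat{C}\de\colon \hat{C}Y\rtarr \hat{C}RY_1$.

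For condition (i) in the definition of special, Lemma \ref{5.5} gives $(\hat{C}Y)_0 = Y_0$, which is aspherical by hypothesis. For condition (ii), I would argue that specialness is preserved under levelwise equivalence of $\UP$-spaces: given an equivalence $f\colon Z\rtarr Z'$, the naturality of $\de_n$ yields a square whose vertical maps are equivalences, so a two-out-of-three argument shows that the top horizontal $\de_n\colon Z_n\rtarr Z_1^n$ is an equivalence whenever the bottom one is. Applied along the zigzag starting from the visibly special object $RCY_1$, this forces both $\hat{C}RY_1$ and $\hat{C}Y$ to be special.

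For the final ``restricts to a monad'' clause, I would observe that special $\UP$-spaces form a full subcategory of $\sV$, so once $\hat{C}$ is shown to preserve this subcategory, the unit $\et$ and multiplication $\mu$ of the monad restrict automatically. I do not anticipate any real obstacle in this argument: the only conceptual step is the identification of specialness of $Y$ with the statement that $\de\colon Y\rtarr RY_1$ is a levelwise equivalence, after which Lemmas \ref{5.5}, \ref{5.6}, and \ref{5.7} assemble to give the result. If anything subtle does arise, it will be in verifying that $\de$ is a morphism of $\UP$-spaces (a routine check using $\de_j\com\ph = \de_{\ph^{-1}(j)}$ for a projection $\ph$), but this is purely formal.
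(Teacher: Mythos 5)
Your argument is correct and is essentially the paper's proof: unwinding your "specialness is invariant under levelwise equivalence" step, the naturality square for $\de$ applied to $\hat{C}\de\colon \hat{C}Y\rtarr \hat{C}RLY$ is exactly the commutative square the paper uses, with Lemma \ref{5.6} giving the horizontal equivalences, Lemma \ref{5.7} the isomorphism on the right, and two-out-of-three yielding specialness of $\hat{C}Y$.
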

\begin{proof}  Applying Lemma \ref{5.6} to the horizontal arrows in the 
commutative diagram
\[ \xymatrix{
\hat{C}Y \ar[r]^-{\hat{C}\de} \ar[d]_{\de} & \hat{C}R L Y
\ar[d]_{\iso}^{\de} \\
RL\hat{C}Y \ar[r]_-{RL\hat{C}\de} & RL\hat{C}RL Y,\\} \]
we see that its left vertical arrow is an equivalence.
\end{proof}

We can now compare $\hat{\sC}$-spaces in $\sV$ to $\sC$-spaces in $\sU$ in 
the same way that we compared the analogous categories of based spaces in 
\cite[p.\, 219]{MT}.  We use the two-sided monadic bar construction of 
\cite{Geo}, the properties of which are recalled in \cite[\S8]{Prequel}.
We recall relevant generalities relating monads to adjunctions in 
Appendix A.  We use properties of geometric realization proven in 
\cite{Geo} and the following unbased analogue of \cite[12.2]{Geo}, which has
essentially the same proof.

\begin{lem}\label{geohat} For simplicial objects $Y$ in the category $\sV$, there is a natural 
isomorphism $\nu\colon |\hat{C}Y|\rtarr \hat{C}|Y|$ such that the
following diagrams commute.
\[\xymatrix{
|Y| \ar[r]^-{|\et|} \ar[dr]_{\et} & |\hat{C}Y| \ar[d]^{\nu}\\
 & \hat{C}|Y| \\ }
\ \ \  \text{and}\ \  \
\xymatrix{
|\hat{C}\hat{C}Y| \ar[r]^-{|\mu|} \ar[d]_{\hat{C}\nu\com\nu} 
& |\hat{C}Y | \ar[d]^{\nu} \\
\hat{C}\hat{C}|Y| \ar[r]_{\mu} &  \hat{C}Y \\ }  \]
If $(Y,\xi)$ is a simplicial $\hat{C}$-algebra, then $(|Y|,|\xi|\com\nu^{-1})$
is a $\hat{C}$-algebra.
\end{lem}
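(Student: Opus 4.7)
My plan is to build the isomorphism $\nu$ objectwise in $\UP$, using the explicit formula for $\hat{C}$ from Lemma \ref{5.5} together with the fact that geometric realization of simplicial spaces commutes with the operations out of which $\hat{C}$ is assembled. For simplicial $\UP$-spaces, $|{-}|$ is computed objectwise, so it suffices to produce compatible isomorphisms $\nu_n\colon |(\hat{C}Y)_n|\iso (\hat{C}|Y|)_n$ for each $\mathbf{n}\in \UP$. Recall from \cite{Geo} that realization of simplicial (compactly generated) spaces is a left adjoint, hence commutes with coproducts; it also commutes with finite products and with quotients by actions of discrete groups (the latter being a coequalizer built from finite products).

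Applied to the formula
\[ (\hat{C}Y)_n = \coprod_{\ps\in\sE(\mathbf{m},\mathbf{n})} \left(\prod_{1\leq j\leq n}\sC(|\ps^{-1}(j)|)\right) \times_{\SI(\ps)} Y_m, \]
in which each $\sC(|\ps^{-1}(j)|)$ is a constant simplicial space and each $\SI(\ps)$ is discrete, these facts deliver the natural isomorphism $\nu_n$. Naturality in $\mathbf{n}\in\UP$ holds because the structure maps of $\hat{C}Y$ as a $\UP$-space come from composition with projections in $\sF$, which act on the indexing set $\sE(\mathbf{m},\mathbf{n})$ and on the factors $\sC(|\ps^{-1}(j)|)$ and $Y_m$ in ways that are preserved by the coproduct/product/quotient decomposition. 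Thus $\nu$ is a natural isomorphism in $\sV$.

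Next I would verify the two diagrams. The unit diagram is immediate: $\et\colon Y\rtarr \hat{C}Y$ is the inclusion of the summand indexed by the ordered identity morphism $\mathbf{m}\rtarr\mathbf{m}$ (decorated by identity operations $\id\in\sC(1)$), and $\nu$ is by construction the identity on this summand. The multiplication diagram is a bookkeeping check: using Construction \ref{construct}, $\mu$ is assembled from composition in $\hat{\sC}$, which in turn is built from the operadic structure maps $\ga$ of $\sC$ and the combinatorics of composition in $\sE$. Since all of these operations are among those that realization commutes with, tracing through the formula shows that the square commutes summand by summand.

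For the last assertion, if $(Y,\xi)$ is a simplicial $\hat{C}$-algebra, then $|\xi|\colon |\hat{C}Y|\rtarr |Y|$ is defined, and the composite $|\xi|\com\nu^{-1}\colon \hat{C}|Y|\rtarr |Y|$ inherits the unit and associativity axioms from those of $\xi$ via the two compatibility diagrams just established. The only mildly subtle point is ensuring that realization genuinely commutes with the quotient $\times_{\SI(\ps)}$; this is the step I expect to be the main technical obstacle, but it is handled by the standard argument that a left adjoint preserves the coequalizer defining such a quotient, given that $\SI(\ps)$ is discrete and that realization preserves the products that appear. Beyond that, everything is the objectwise extension of the proof of \cite[12.2]{Geo}, simplified here by working in $\sU$ rather than $\sT$ so that no basepoint identifications need to be tracked.
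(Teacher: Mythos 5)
Your proposal is correct and is essentially the argument the paper has in mind: the paper simply asserts that Lemma \ref{geohat} ``has essentially the same proof'' as \cite[12.2]{Geo}, and that proof is exactly what you carry out --- apply the explicit coproduct/product/quotient description of $(\hat{C}Y)_n$ from Lemma \ref{5.5} objectwise and use that geometric realization of simplicial (compactly generated) spaces preserves coproducts, finite products, and quotients by discrete group actions, with the two diagrams then checked summand by summand. Your closing observation that working in $\sU$ rather than $\sT$ removes the basepoint identifications is precisely the simplification the paper is exploiting.
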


\begin{thm}\label{ChatC}  If $\sC$ is $\SI$-free, then the functor $R$ induces 
an equivalence from the homotopy category of $\sC$-spaces to the homotopy category of special $\hat{\sC}$-spaces.
\end{thm}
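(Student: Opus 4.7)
The plan is to construct a functor $V$ from special $\hat{\sC}$-spaces to $\sC$-spaces and exhibit natural equivalences $VR\simeq\Id$ and $RV\simeq\Id$, making $R$ and $V$ mutually inverse equivalences of homotopy categories. To begin, $R$ preserves equivalences since $(Rf)_n=f^n$, and Lemma~\ref{5.7} gives an isomorphism $\hat{C}RX\iso RCX$; in particular the $\de$ map for $RX$ is the identity, so $RX$ is always special, and $R$ factors through the special $\hat{\sC}$-spaces.

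To define $V$, I first build a natural transformation $\al\colon L\hat{C}\rtarr CL$ by setting
\[\al_Y = L\hat{C}\de_Y\colon L\hat{C}Y \rtarr L\hat{C}RLY = CLY,\]
where the equality uses Lemma~\ref{5.7} applied to $LY$. This $\al$ equips $CL\colon \sV\rtarr \sU$ with a right $\hat{C}$-action $\la$, namely the composite $CL\hat{C}\xrightarrow{C\al} C^2L\xrightarrow{\mu^C L} CL$. I would verify the unit and associativity axioms for a right $\hat{C}$-functor; the unit axiom reduces to the identity $\al\com L\eta^{\hat{C}} = \eta^C L$, which follows from the triangle identity $L\de=\id$, naturality of $\eta^{\hat{C}}$, and the way $\eta^C$ arises from the identification $C = L\hat{C}R$, while associativity is a similar diagram chase. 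With $\la$ in hand, set $VY = B(CL,\hat{C},Y) = |B_*(CL,\hat{C},Y)|$; the left $C$-action on the leading $CL$-factor descends through realization (using the $\sU$-analogue of Lemma~\ref{geohat}) to endow $VY$ with a $\sC$-space structure, and the combination of Lemma~\ref{5.6} with standard properties of the two-sided bar construction shows that $V$ preserves equivalences.

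The computation $VR\simeq \Id$ is clean. The $q$-simplices of $B_*(CL,\hat{C},RX)$ are $CL\hat{C}^q RX$; iterating the isomorphism $\hat{C}R\iso RC$ of Lemma~\ref{5.7} gives $L\hat{C}^q RX\iso C^qX$, and hence $CL\hat{C}^q RX\iso C^{q+1}X$ compatibly with the face and degeneracy operators, so $V(RX)\iso B(C,C,X)$, which is equivalent to $X$ as a $\sC$-space via the standard augmentation.

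The computation $RV\simeq \Id$ is where specialness enters. Since $R$ is built from finite products it commutes with geometric realization, and $RCL=\hat{C}RL$ by Lemma~\ref{5.7}, so
\[RVY \iso |B_*(\hat{C}RL,\hat{C},Y)|.\]
The natural transformation $\hat{C}\de\colon \hat{C}\rtarr \hat{C}RL$ induces a simplicial map $B_*(\hat{C},\hat{C},Y)\rtarr B_*(\hat{C}RL,\hat{C},Y)$. When $Y$ is special, $\de_Y$ is an equivalence, hence by Lemma~\ref{5.6} so is $\hat{C}\de_Y$, so this map is a levelwise equivalence and remains one after realization. Combined with the standard equivalence $B(\hat{C},\hat{C},Y)\simeq Y$, this yields $Y\simeq RVY$. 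The main obstacle I anticipate is the fiddly verification that $\al$ makes $CL$ a right $\hat{C}$-functor satisfying the monad associativity axiom; once that is in place, everything else is a formal manipulation of the two-sided bar construction together with Lemmas~\ref{5.6} and~\ref{5.7}.
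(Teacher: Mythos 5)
Your proposal is correct and follows essentially the same route as the paper: the functor $V Y = B(CL,\hat{C},Y)$ is exactly the paper's $\LA$, your zigzag $Y \leftarrow B(\hat{C},\hat{C},Y) \rightarrow B(RCL,\hat{C},Y)\iso RVY$ (using Lemma~\ref{5.6} and specialness for the forward map) is the paper's comparison, and the identification $V(RX)\iso B(C,C,X)$ is its final step. The only difference is packaging: the right $\hat{C}$-action on $CL$ that you build by hand via $\al = L\hat{C}\de$ is precisely what the paper extracts formally from Proposition~\ref{omniold} (with $L\bar{\de}=L\hat{C}\de$ by the triangle identity), so the content is the same.
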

\begin{proof} Lemma  \ref{5.7} puts us into one of the two 
contexts discussed in general categorical terms in Proposition \ref{omniold}.  Let $X$ be
a $\sC$-space and $Y$ be a $\hat{\sC}$-space.  By (iii) and (iv) of 
Proposition \ref{omniold}, $R$ embeds the category of $\sC$-spaces as the full 
subcategory of the category of $\hat{\sC}$-spaces consisting of those
$\hat{\sC}$-spaces with underlying $\UP$-space of
the form $RX$.  By (i) and (ii) of Proposition \ref{omniold}, $CL$ is a 
$\hat{C}$-functor and we can define a functor 
$\LA\colon \hat{\sC}[\sU]\rtarr \sC[\sU]$ by
\[ \LA Y = B(CL, \hat{C},Y). \]
By Corollaries \ref{coromni1} and \ref{coromni2}, together with general 
properties of the geometric realization of simplicial spaces proven in 
\cite{Geo}, we have a diagram
\[ \xymatrix@1{
Y & B(\hat{C},\hat{C},Y)\ar[r]^-{\de} \ar[l]_-{\epz} 
& B(RCL,\hat{C},Y)\iso R\LA Y\\} \]
of $\hat{\sC}$-spaces in which the map $\epz$ is a homotopy equivalence with natural homotopy inverse $\et$ and the map $\de = B(\de,\id,\id)$ is an equivalence when $Y$ is special.  Thus the diagram displays a natural weak equivalence between 
$Y$ and $R\LA Y$.  When $Y=RX$, the displayed diagram is obtained by applying 
$R$ to the analogous diagram
\[ \xymatrix@1{
X & B(C,C,X)\ar[r]^-{\iso} \ar[l]_-{\epz} & B(CL,\hat{C},RX) = \LA RX\\} \]
of $C$-algebras, in which $\epz$ is a homotopy equivalence with natural
inverse $\et$. 
\end{proof}

\begin{rem}\label{ptofview} In \cite{MT}, the focus was on the 
generalization of the infinite loop space machine of \cite{Geo} from 
$\sC$-spaces in $\sT$ to $\hat{\sC}$-spaces in $\sT$. For that purpose, 
it was essential 
to use the approximation theorem and therefore essential to use the monads 
in $\sT$ and $\PI[\sT]$ that
are constructed using basepoint type identifications.  It is that 
theory that forced the use of the cofibration condition Definition  \ref{DTSpec}(iii).
However, we are here only concerned with the conversion of 
$\hat{\sC}$-spaces to $\sC$-spaces, and for that purpose we are
free to work with the simpler monads on $\sU$ and $\sV=\UP[\sU]$ whose
algebras are the $\sC$-spaces and $\hat{\sC}$-spaces in $\sU$.
From the point of view of infinite loop space machines, we prefer
to convert input data to $\sC$-spaces and then apply the original
machine of \cite{Geo} rather than to generalize 
the machine to $\hat{\sC}$-spaces.
\end{rem}

\section{Pairs of operads and pairs of categories of operators}\label{Mult0}

With this understanding of the additive theory, we now turn to the
multiplicative theory.  We first recall some basic definitions from 
\cite[\S1]{Mult} since they are 
essential to understanding the details. However, the reader should not
let the notation obscure the essential simplicity of the ideas. We are
just parametrizing the structure of a ring space, or more accurately
rig space since their are no negatives, and then generalizing from 
operations on products $X^n$ to operations on $Y_n$, where, when $Y$ is 
special, $Y_n$ looks homotopically like $Y_1^n$.

The category $\sF$ is symmetric monoidal (indeed, bipermutative) under the 
wedge and product.  On 
objects, the operations are sum and product interpreted by 
ordering elements in blocks and lexicographically.  That is, the set
$\mathbf{m}\wed \mathbf{n}$ is identified with $\mathbf{m+n}$ 
by identifying $i$ with $i$ for $1\leq i\leq m$ and $j$ with
$j+m$ for $1\leq j\leq n$, and the set $\mathbf{m}\sma \mathbf{n}$
is identified with $\mathbf{mn}$ by identifying $(i,j)$, $1\leq i\leq m$
and $1\leq j\leq n$ with $ij$, with the ordering $ij < i'j'$ if $i < i'$ 
or $i=i'$ and $j< j'$. The wedge and smash product of morphisms are forced 
by these identifications. We fix notations for standard permutations.

\begin{notns}\label{perms1} Fix non-negative integers $k$, $j_r$ for $1\leq r\leq k$, 
and $i_{r,q}$ for $1\leq r\leq k$ and $1\leq q\leq j_r$.
\begin{enumerate}[(i)]
\item Let $\si\in \SI_k$.  Define $\si\langle j_1,\dots, j_k \rangle$
to be that permutation of $j_1\cdots j_k$ elements which corresponds under lexicographic identification to the permutation of smash products
\[ \si\colon \mathbf{j_1}\sma \cdots \sma \mathbf{j_k}
\rtarr \mathbf{j_{\si^{-1}(1)}}\sma \cdots \sma \mathbf{j_{\si^{-1}(k)}}. \]
\item Let $\ta_r\in \SI_{j_r}$, $1\leq r\leq k$.  Define 
$\ta_1\otimes \cdots \otimes \ta_k$ 
to be that permutation of $j_1\cdots j_k$ elements which corresponds under lexicographic identification to the smash product of permutations
\[ \ta_1\sma\cdots \sma \ta_k\colon \mathbf{j_1}\sma \cdots \sma \mathbf{j_k}
\rtarr \mathbf{j_1}\sma \cdots \sma \mathbf{j_k}. \]
\item Let $Q$ run over the set of sequences $(q_1,\cdots, q_k)$ such that 
$1\leq q_r \leq j_r$, ordered lexicographically.  Define 
$\nu = \nu(\{k,j_r,i_{r,q}\})$ to be that permutation of 
\[ \Sigma_{Q}\, (\times_{1\leq r\leq k}\, i_{r,q_r}) 
= \times_{1\leq r\leq k}\, (\Sigma_{1\leq q\leq j_r}\, i_{r,q}) \]
elements which corresponds under block sum and lexicographic 
identifications on the left and right to the natural distributivity
isomorphism
\[ \bigvee_{Q}\, (\bigwedge_{1\leq r\leq k}\, \mathbf{i_{r,q_r}}) \iso 
\bigwedge_{1\leq r\leq k}\, (\bigvee_{1\leq q\leq j_r}\, \mathbf{i_{r,q}}). \]
\end{enumerate}
\end{notns}

\begin{defn}\label{CLact} Let $\sC$ and $\sG$ be operads with
$\sC(0)=\{0\}$ and $\sG(0) =\{1\}$. Write $\ga$ for the structure
maps of both operads and $\id$ for the unit elements in both $\sC(1)$ 
and $\sG(1)$.   An action of $\sG$ on $\sC$ consists of maps 
\[\la \colon \sG(k)\times \sC(j_1)\times \cdots  \times \sC(j_k)
\rtarr \sC(j_1\cdots j_k) \]
for $k\geq 0$ and $j_r\geq 0$ which satisfy the following distributivity,
unity, equivariance, and nullity properties.  Let 
\[ g\in \sG(k)\ \ \  \text{and} \ \ \  g_r\in \sG(j_r) \ \ \text{for}\ \
1\leq r\leq k \]
\[ c\in \sC(j)\ \ \  \text{and} \ \ \  c_r\in \sC(j_r) \ \ \text{for}\ \,
1\leq r\leq k \]
\[ c_{r,q} \in \sC(i_{r,q}) \ \ \text{for}\ \ 1\leq r\leq k \ \ \text{and}
\ \ 1\leq q\leq j_r. \]
Further, let
\[ c_{J_r} = (c_{r,1},\cdots, c_{r,j_r}) 
\in \sC(i_{r,1})\times \cdots \times \sC(i_{r,j_r})\]
and
\[ c_Q = (c_{1,q_1},\cdots, c_{k,q_k})\in \sC(i_{1,q_1})\times\cdots \times 
\sC(i_{k,q_k}). \]

\begin{enumerate}[(i)]
\item $\la(\ga(g;g_1,\cdots,g_k); c_{J_1},\cdots, c_{J_k})
= \la(g;\la(g_1; c_{J_1}), \cdots, \la(g_k; c_{J_k}))$.
\vspace{.5mm}
\item $\ga(\la(g;c_1,\cdots,c_k);\times_{Q}\la(g;c_Q))\ \nu
= \la(g;\ga(c_1;c_{J_1}),\cdots, \ga(c_k;c_{J_k}))$.
\vspace{.5mm}
\item $\la(\id;c) = c$.
\vspace{.5mm}
\item $\la(g;\id^k) = \id$.
\vspace{.5mm}
\item $\la(g\si; c_1,\cdots,c_k) = 
\la(g;c_{\si^{-1}(1)}, \cdots, c_{\si^{-1}(k)})\
\si\langle j_1,\dots, j_k \rangle$.
\vspace{.5mm}
\item $\la(g; c_1\ta_1,\cdots,c_k\ta_k) =
\la(g;c_1,\cdots,c_k)\ \ta_1\otimes \cdots \otimes \ta_k$.
\vspace{.5mm}
\item $\la(1) = \id\in \sC(1)$ when $k=0$.
\vspace{.5mm}
\item $\la(g;c_1,\cdots,c_k) = 0$ when any $j_r = 0$. 
\end{enumerate}
\end{defn}

Here (i), (iii), (v), and (vii) relate the $\la$ to the internal structure 
of $\sG$, while (ii), (iv), (vi), and (viii) relate the $\la$ to the internal
structure of $\sC$. 

We have an analogous notion of an action of a category of operators
$\sK$ on a category of operators $\sD$.  Again, we fix notations
for some standard permutations.  

\begin{notns}\label{perms2} Let $\ph\colon \mathbf{m} \rtarr \mathbf{n}$ and 
$\ps\colon \mathbf{n} \rtarr \mathbf{p}$ be morphisms in $\sF$. For
nonnegative integers $r_i$, $1\leq i\leq m$, define 
$s_k = \times_{\ps \ph(i) = k} r_i$.  Define $\si_k(\ps,\ph)$ to be that
permutation of $s_k$ letters which corresponds under lexicographic
ordering to the bijection
\[ \bigwedge_{\ps\ph(i) = k}\, \mathbf{r_i} \rtarr 
\bigwedge_{\ps(j) = k} \, \bigwedge_{\ph(i) = j} \mathbf{r_i} \]
that permutes the factors $\mathbf{r_i}$ from their order on the left
($i$ increasing) to their order on the right ($j$ increasing and, for
fixed $j$, $i$ increasing).  Here $s_k = 1$ and 
$\si_k(\ps,\ph)\colon \mathbf{1}\rtarr \mathbf{1}$ is the identity
if there are no $i$ such that $\ps\ph(i) = k$.  Define $\si(\ps,\ph)$
to be the isomorphism in $\PI^p$ with coordinates the $\si_k(\ps,\ph)$. 
For morphisms $f\colon \mathbf{m} \rtarr \mathbf{n}$ and 
$g\colon \mathbf{n} \rtarr \mathbf{p}$ in a category of operators
$\sD$, write $\si_k(g,f) = \si_k(\epz(g),\epz(f))$, and write $\si(g,f)$
for their product in $\sD^p$. 
\end{notns}

Let $\sD^0$ be the trivial category, which has one object $\ast$ and its identity morphism. 

\begin{defn}\label{CLhatact} Let $\sD$ and $\sK$ be categories of
operators.  An action $\la$ of $\sK$ on $\sD$ consists of functors
$\la(f)\colon \sD^m\rtarr \sD^n$ for $f\in\sK(\mathbf{m},\mathbf{n})$
which satisfy the following properties. Let $\epz(f) = \ph\colon \mathbf{m}\rtarr \mathbf{n}$.
\begin{enumerate}[(i)]
\item On objects, $\la(f)$ is specified by
\[\la(f)(\mathbf{r_1},\cdots,\mathbf{r_m}) 
= (\mathbf{s_1},\cdots,\mathbf{s_n}), \ \ \ \text{where} \ \ \
\mathbf{s_j} = \sma_{\ph(i) = j}\, \mathbf{r_i}. \]
\item On morphisms $(\ch_1,\cdots,\ch_m)$ of $\PI^m\subset \sD^m$,
$\la(f)$ is specified by
\[ \la(f)(\ch_1,\cdots,\ch_m) = (\om_1,\cdots,\om_n), \ \ \ 
\text{where}\ \ \ \om_j = \sma_{\ph(i)=j}\, \ch_i.  \]
\item On general morphisms $(d_1,\cdots, d_m)$ of $\sD^m$, $\la(f)$ 
satisfies
\[ \epz(\la(f)(d_1,\cdots,d_m)) = (\om_1,\cdots,\om_n), \ \ \ 
\text{where}\ \ \ \om_j = \sma_{\ph(i)=j}\, \epz(d_i).  \]
\item For morphisms $\ph\colon \mathbf{m}\rtarr \mathbf{n}$ of 
$\PI\subset \sK$, $\la(\ph)$ is specified by
 \[ \la(\ph)(d_1,\cdots,d_m) = (d_{\ph^{-1}(1)},\cdots,d_{\ph^{-1}(n)}) \]
\item For morphisms $f\colon \mathbf{m}\rtarr \mathbf{n}$ and
$g\colon \mathbf{n}\rtarr \mathbf{p}$ in $\sK$, the 
isomorphisms $\si(g,f)$ in $\PI^p\subset \sC^p$ specify a natural
isomorphism $\la(g\com f)\rtarr \la(g)\com \la(f)$.
\end{enumerate}
If $\ph^{-1}(j)$ is empty, then the $j^{th}$ coordinate of $\la(f)$ is 
$\mathbf{1}$ in (i) and the $j^{th}$ coordinate is $\text{id}\in\sC(1)$
in (ii)--(iv). (Compare Definition \ref{CLact}(vii)).
\end{defn} 

In what should by now be standard bicategorical language, the $\la(\mathbf{n})$, 
$\la(f)$, and $\si(g,f)$ specify a pseudofunctor $\la\colon \sK\rtarr \sC\!at$.
We do not assume familiarity with this, but it shows that the definition is
sensible formally. The
definition itself specifies an action of $\PI$ on any category  of 
operators $\sD$ and an action of any category of operators $\sK$
on both $\PI$ and $\sF$.  However, our interest is in $(\hat{\sC},\hat{\sG})$,
where $(\sC,\sG)$ is an operad pair with $\sG$ acting on $\sC$. To connect up
definitions, we first use Notations \ref{perms2} to recall how composition is defined
in the category of operators $\hat{\sC}$ associated to an operad $\sC$.

\begin{notns}\label{details} For an operad $\sC$, write 
$(\ph;c_1,\cdots,c_k)$, or $(\ph;c)$ for short, for morphisms in 
$\hat{\sC}(\mathbf{m},\mathbf{n})$. Here 
$\ph\colon \mathbf{m} \rtarr \mathbf{n}$ is a morphism in $\sF$
and $c_j\in \sC(|\ph^{-1}(j)|)$, with $c_j = 0\in \sC(0)$ if
$\ph^{-1}(j)$ is empty.  For $(\ps;d)\in \hat{\sC}(\mathbf{n},\mathbf{p})$, composition in $\hat{\sC}$ is specified by
\[ (\ps;d)\com (\ph;c) = (\ps\com\ph;\times_{1\leq k\leq p}\
\ga(d_k;\times_{\ps(j) = k} c_j)\, \si_k(\ps,\ph)). \]
\end{notns}

\begin{notns}\label{moredetails}  Recall that we have canonical morphisms 
$\ph_n\colon \mathbf{n} \rtarr \mathbf{1}$ in $\sF$ that send $j$ to $1$ 
for $1\leq j\leq n$.  Together with the morphisms of $\PI$, they generate
$\sF$ under the wedge sum.  Notice that 
$\sma_{1\leq r\leq k} \ph_{j_r} = \ph_{j_1\cdots j_r}$.  We define an
embedding $\io$ of the operad $\sC$ in the category of operators $\hat{\sC}$
by mapping $c\in \sC(n)$ to the morphism
$(\ph_n;c)\colon \mathbf{n}\rtarr \mathbf{1}$.  Using wedges in $\sF$ and
cartesian products of spaces $\sC(j)$, we define maps
\[ \hat{\sC}(\mathbf{j_1},1)\times \cdots \times \hat{\sC}(\mathbf{j_k},1)
\rtarr \hat{\sC}(\mathbf{j_1 +\cdots + j_k},\mathbf{k}). \]
The operadic structure maps $\ga$ are recovered from these maps and composition
\[\hat{\sC}(\mathbf{k},\mathbf{1})\times 
\hat{\sC}(\mathbf{j_1 +\cdots + j_k},\mathbf{k})
\rtarr \hat{\sC}(\mathbf{j_1 +\cdots + j_k},\mathbf{1}). \]
\end{notns}

The following result is \cite[1.9]{Mult}, and more details may be found there.

\begin{prop} An action $\la$ of an operad $\sG$ on an operad $\sC$
determines and is determined by an action of $\hat{\sG}$ on $\hat{\sC}$.
\end{prop}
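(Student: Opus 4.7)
The plan is to give explicit mutually inverse constructions, using the embedding $\io\colon \sC(n)\hookrightarrow \hat{\sC}(\mathbf{n},\mathbf{1})$, $c\mapsto(\ph_n;c)$ of Notations \ref{moredetails}, and the analogous embedding for $\sG$. The key observation is that every morphism of $\hat{\sC}(\mathbf{m},\mathbf{n})$ factors, up to wedges and morphisms of $\PI$, as a wedge of morphisms in the image of $\io$. Consequently an action of $\hat{\sG}$ on $\hat{\sC}$ is determined by its values on elementary morphisms of the form $\io(g)\in\hat{\sG}(\mathbf{k},\mathbf{1})$ applied to tuples $(\io(c_1),\ldots,\io(c_k))$, combined with the forced behavior on $\PI$-morphisms coming from Definition \ref{CLhatact}(ii) and (iv).

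For the forward passage, given an operadic $\la$ as in Definition \ref{CLact}, I would define for each $f=(\ph;g_1,\ldots,g_n)\in\hat{\sG}(\mathbf{m},\mathbf{n})$ a functor $\la(f)\colon \hat{\sC}^m\rtarr \hat{\sC}^n$ whose action on objects is prescribed by Definition \ref{CLhatact}(i), and whose value on a tuple of morphisms $(d_1,\ldots,d_m)$ with $d_i=(\ch_i;c_i)$ has $j$-th coordinate with underlying $\sF$-map $\sma_{\ph(i)=j}\epz(\ch_i)$ and operadic coordinates assembled by applying $\la(g_j;-)$ to the operadic coordinates of the $d_i$ with $\ph(i)=j$, arranged in lexicographic order. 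The eight axioms of Definition \ref{CLact} then match up with the requirements of Definition \ref{CLhatact} plus functoriality: (iii), (iv), (vii), (viii) handle identities, units, and the empty-preimage convention; (v) and (vi) give $\PI$-equivariance in the two variables; axiom (i) produces the natural isomorphism $\si(g,f)$ of Definition \ref{CLhatact}(v) once combined with the permutations $\si_k(\ps,\ph)$ in the composition formula of Notations \ref{details}; and axiom (ii) yields the functoriality of each $\la(f)$ on $\hat{\sC}^m$. For the reverse passage, given a categorical action I set
\[ \la(g;c_1,\ldots,c_k) := \text{the operadic coordinate of } \la(\io(g))(\io(c_1),\ldots,\io(c_k)), \]
which lies in $\sC(j_1\cdots j_k)$ since the codomain $\mathbf{1}$ carries a single operadic slot $\sC(j_1\cdots j_k)$ by Definition \ref{CLhatact}(i). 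The operadic axioms follow by dualizing the preceding checks, and the two passages are visibly mutually inverse.

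The main obstacle is the purely combinatorial verification that the distributivity permutation $\nu = \nu(\{k,j_r,i_{r,q}\})$ of Notations \ref{perms1}, which compares a wedge of smash products with a smash of wedge products, coincides with the relevant product of smash-reordering permutations $\si_k(\ps,\ph)$ of Notations \ref{perms2} that appear in the composition formula of Notations \ref{details} when applied to the pair of $\hat{\sC}$-morphisms under consideration. Once each permutation is unwound as a bijection of indexed sets of basic tuples, one sees they perform the same reordering, and the distributivity axiom (ii) of Definition \ref{CLact} translates exactly to functoriality of each $\la(f)$ under composition in $\hat{\sC}^m$. With this combinatorial identity in hand, the remaining bookkeeping between the two axiom lists is mechanical.
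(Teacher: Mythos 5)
Your proposal is correct and follows essentially the same route as the paper's sketch: both use the embeddings $\io$, define the categorical action coordinatewise from the operadic $\la$ (so that $\io\la(g;c_1,\cdots,c_k)=\la(\io g;\io c_1,\cdots,\io c_k)$ determines everything uniquely), and defer the same combinatorial verification identifying $\nu$ with the permutations $\si_k(\ps,\ph)$ appearing in the composition formula. The only divergence is bookkeeping: the paper's sketch assigns (i), (iii), (v) to the functoriality of the $\la(f)$ and (ii), (iv), (vi) to the naturality constraint of Definition \ref{CLhatact}(v), whereas you assign them the other way around; your matching is the one consistent with the paper's own remark that (i), (iii), (v), (vii) concern the internal structure of $\sG$ (hence composition in $\hat{\sG}$ and the comparison $\la(g\com f)\rtarr\la(g)\com\la(f)$), while (ii), (iv), (vi), (viii) concern the internal structure of $\sC$ (hence composition in $\hat{\sC}^m$ and the functoriality of each $\la(f)$).
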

\begin{proof}[Sketch proof]  We have the embeddings $\io$ of $\sC$ in
$\hat{\sC}$ and $\sG$ in $\hat{\sG}$.  An action $\la$ of $\sG$ on $\sC$ 
is related to the corresponding action $\la$ of $\hat{\sG}$ on $\hat{\sC}$ by
\begin{equation}\label{lambda2}
\io\la(g;c_1,\cdot,c_k) = \la(\io g;\io c_1,\cdots, \io c_k).
\end{equation}
Given $\la$ on the categories, this clearly determines $\la$
on the operads.  Conversely, given the combinatorics of how $\hat{\sG}$ 
and $\hat{\sC}$ are constructed from $\sG$ and $\sC$, there is a unique way 
to extend (\ref{lambda2}) from the operads to the categories.  Looking at 
Definition \ref{CLhatact}, we see that if $f\colon \mathbf{m}\rtarr \mathbf{n}$ is 
a morphism of $\hat{\sG}$ with $\epz(f) = \ph$ and $c_i$ is a morphism of 
$\hat{\sC}$,
$1\leq i\leq m$, then the $j^{th}$ coordinate of $\la(f;c_1,\cdots,c_m)$
depends only on those $c_i$ with $\ph(i) = j$, and $f$ has coordinates
$f_j\in \sG(|\ph^{-1}(j)|)$ that allow use of the operadic $\la$ to
specify the categorical $\la$. Details are in \cite[1.9]{Mult}.  
Formulas (i), (iii), and (v) of Definition \ref{CLact} correspond to the requirement 
that the $\la(f)$ be functors. Formulas (ii), (iv), and (vi) correspond to 
the naturality requirement of Definition \ref{CLhatact}(v).  Formulas (vii) and (viii)
are needed for compatible treatment of $1\in\sG(0)$ and $0\in\sC(0)$. 
\end{proof}

\section{Categories of ring operators and their actions}\label{Mult1}

We can coalesce a pair of operator categories $(\sD,\sK)$ into a
single wreath product category $\DK$.  The construction actually 
applies to any pseudofunctor $\la$ from any category $\sG$ to
$\sC\!at$, but we prefer to specialize in order to fix notations.

\begin{defn}\label{wreath} 
Let $\la$ be an action of $\sK$ on $\sD$, where $\sK$ and $\sD$
are categories of operators.   The objects of $\DK$ are the $n$-tuples 
of finite based sets (objects of $\sF$) for $n\geq 0$.  We write objects as 
$(n;S)$, where $S = (\mathbf{s_1},\cdots,\mathbf{s_n})$. There is a single object, denoted $(0;\ast)$, when $n=0$; we think of $\ast$ as the
empty sequence.  The space of morphisms 
$(m;R)\rtarr (n;S)$ in $\DK$ is
\[ \coprod_{\ph \in \sF (\mathbf{m},\mathbf{n})} \epz^{-1}(\ph)\times \prod_{1 \leq j\leq n}\sD(\bigwedge_{\ph (i)=j}\mathbf{r_i}, \mathbf{s_j}),  \ \ \ \epz\colon \sK\rtarr \sF, \]
where the empty smash product is $\mathbf{1}$. Typical morphisms are written 
$(f;d)$, where $f\in\sK(\mathbf{m},\mathbf{n})$
and $d=(d_1,\cdots,d_n)$. If $\epz(f) = \ph$, then 
$d_j\in \sD(\sma_{\ph(i)=j}\mathbf{r_i},\mathbf{s_j})$.  
For a morphism $(g;e)\colon (n;S)\rtarr (p;T)$,
composition is specified by
\[ (g;e)\com (f;d) = (g\com f;e\com \la(g)(d)\com \si(g,f)). \]   
More explicitly, with $\epz(g) = \ps$, the $k^{th}$ coordinate of 
$e\com \la(g)(d)\com \si(g,f)$
is the composite
\[ \xymatrix@1{
\bigwedge_{\ps\ph(i)=k}\, \mathbf{r_i} \ar[r]^-{\si_k(\ps,\ph)}
& \bigwedge_{\ps(j)=k}\, \bigwedge_{\ph(i) = j}\, \mathbf{r_i} 
\ar[rrr]^-{\la_k(g)(\times_{\ps(j) = k}d_j)}
& & & \bigwedge_{\ps(j)=k} \mathbf{s_j} \ar[r]^-{e_k} & \mathbf{t_k}. \\} \]
The object $(0;\ast)$ is terminal, with unique morphism $(m;R)\rtarr (0;\ast)$
denoted $(0;\ast)$;  the morphisms $(0;\ast)\rtarr 
(n;S)$ are of the form 
$(0;d)=(\text{id};d)\com (0;\text{id}^n)$, where 
$0\colon \mathbf{0}\rtarr \mathbf{n}$, $\id\colon \mathbf{n}\rtarr \mathbf{n}$
in $\sF$ on the left, and $\id^n\in\sC(1)^n$ on the right.
\end{defn}

We write the morphisms of $\PI\int\PI$ in the form $(\ph;\ch)$, where 
\[ \ch = (\ch_1,\cdots,\ch_n)
\colon (\mathbf{r}_{\ph^{-1}(1)},\cdots,\mathbf{r}_{\ph^{-1}(n)})\rtarr 
(\mathbf{s_1},\cdots,\mathbf{s_n}).  \]
Here either $\ph^{-1}(j)$ is a single element $i$ or it is empty, in
which case $\mathbf{r}_{\ph^{-1}(n)} = \mathbf{1}$.  We interpolate an
analogous definition that is a follow--up to Remarks \ref{PN} and \ref{Q}.  
It will play an important role in our theory.

\begin{defn}  Let $\UP\int\UP$ denote the subcategory of $\PI\int\PI$ obtained 
by restricting all morphisms to be in $\UP$, thus using only projections.  Similarly, 
define $\UP\int \sD$ and $\sK\int\UP$ exactly as in the previous definition, 
but starting from the actions of $\UP$ on $\sD$ 
and $\sK$ on $\UP$ that are obtained by restricting the specifications
of Definition \ref{CLhatact} from $\PI$ to $\UP$. 
\end{defn}

The following observation helps analyze the structure of $\DK$.

\begin{lem}\label{contain} There are inclusions of categories
\begin{center}
$\sD\subset \UP\int \sD \subset \PI\int \sD \subset \DK \supset \sK\int\PI \supset \sK\int\UP\supset \sK.$\\
\end{center}
For maps
$(g;\ch)\colon (n;S)\rtarr (p;T)$ in $\sK\int\PI$ and 
$(\ph;d)\colon (m;R)\rtarr (n;S)$ in $\PI\int \sC$,
\[ (g;\ch)\com (\ph;d) = (1;\ch\com \la(g)(d))\com (g\ph;\si(g,\ph)). \]
The subcategories $\UP\int\sD$ and $\sK\int \UP$ generate $\DK$ under composition.
\end{lem}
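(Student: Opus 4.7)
The plan is to dispatch the three assertions in turn, each reducing to unwinding Definitions \ref{CLhatact} and \ref{wreath} together with the fact that $\la$ and the reordering permutations $\si$ become trivial whenever one of the morphisms in sight is an identity. First, for the chain of inclusions: the middle inclusions $\UP\int\sD\subset\PI\int\sD\subset\DK$ and $\sK\int\UP\subset\sK\int\PI\subset\DK$ come directly from the subcategory inclusions $\UP\subset\PI\subset\sK$ and the analogous ones in the $\sD$ slot, since the wreath product construction is functorial in its input categories. For the outer inclusions I would exhibit explicit embeddings: $\sD\hookrightarrow\UP\int\sD$ sends $d\colon\mathbf{r}\rtarr\mathbf{s}$ to $(\id_1;d)\colon(1;(\mathbf{r}))\rtarr(1;(\mathbf{s}))$, and $\sK\hookrightarrow\sK\int\UP$ sends $f\colon\mathbf{m}\rtarr\mathbf{n}$ to $(f;\id)\colon(m;(\mathbf{1})^m)\rtarr(n;(\mathbf{1})^n)$; checking functoriality of each is immediate.

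Second, the composition formula is a direct computation. Unwinding Definition \ref{wreath}, the left-hand side evaluates to $(g\com\ph;\ch\com\la(g)(d)\com\si(g,\ph))$. On the right-hand side, the identity $1\in\sK(\mathbf{n},\mathbf{n})$ has $\epz(1)=\id$, so $\la(1)$ acts as the identity functor on $\sD^n$ and $\si(1,g\ph)$ is the identity permutation; substituting these collapses the right-hand side to the same expression.

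Third, for the generation claim, I would factor a general morphism $(f;d)\colon(m;R)\rtarr(n;S)$ in $\DK$ with $\epz(f)=\ph$ as the length-two composite
\[ (m;R)\ \overto{(f;\id)}\ (n;S')\ \overto{(\id_n;d)}\ (n;S), \qquad S'_j = \bigwedge_{\ph(i)=j}\mathbf{r_i}. \]
The first factor lies in $\sK\int\UP$ since its second component consists of identities (and identities are projections), while the second factor lies in $\UP\int\sD$ since $\id_n\in\UP(\mathbf{n},\mathbf{n})$; composing them recovers $(f;d)$ by the same triviality of $\la$ and $\si$ on identities used above. The only obstacle is bookkeeping: tracking sources and targets through the wreath-product composition formula and invoking the fact, noted in \S\ref{MT1}, that permutations are projections and so belong to $\UP$, which is what makes expressions like $(g\ph;\si(g,\ph))$ legitimately live in $\sK\int\UP$.
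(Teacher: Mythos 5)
Your proposal is correct and follows essentially the same route as the paper: the same explicit embeddings for the outer inclusions, the same direct unwinding of Definition \ref{wreath} for the displayed formula (which the paper packages as composing the legs of a commutative diagram), and the identical factorization $(f;d)=(\id;d)\com(f;\id^n)$ through the intermediate object $(n;R')$ with $\mathbf{r'_j}=\sma_{\ph(i)=j}\mathbf{r_i}$ for the generation claim. The only slip is notational: the ``$1$'' in the middle formula is the identity of $\mathbf{p}$ in $\sK$, so $\la(1)$ acts on $\sD^p$ rather than $\sD^n$, but this does not affect the argument.
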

\begin{proof} All but the first and last inclusions are obvious. The first inclusion sends an 
object $\mathbf{n}$ to $(1;\mathbf{n})$ and a morphism $d$ to $(\id;d)$.  The last sends an object 
$\mathbf{n}$ to $(n;\mathbf{1}^n)$ and a morphism 
$f\colon \mathbf{m}\rtarr \mathbf{n}$ to $(f;\id^n)$.  As noted in 
\cite[1.6]{Mult}, the displayed formula is obtained by composing the
legs of the following commutative diagram, where, for $1\leq j\leq n$ and 
$1\leq k\leq p$,
\[  \mathbf{r'_j} = \mathbf{r_{\ph^{-1}(j)}}, \ \ \ 
\mathbf{r''_k} = \sma_{\ps(j) = k}\mathbf{r_{\ph^{-1}(j)}}, \ \ \
\mathbf{s'_k} = \sma_{\ps(j) = k}\mathbf{s_j}.  \]
\[\xymatrix{
& & (p;R'') \ar[dr]^{(\text{id};\la(g)(c))} & &  \\
& (n;R') \ar[ru]^{(g;\text{id})} \ar[rd]^{(\text{id};c)} & & (p;S') 
\ar[rd]^{(\text{id};\ch)} & \\
(m;R) \ar[rr]_-{(\ph;c)} \ar[ur]^{(\ph;\text{id})}& &(n;S) \ar[rr]_-{(g,\ch)} \ar[ur]^{(g;\text{id})} & & (p;T) \\}  \]
Any morphism $(f;d)\colon (m;R)\rtarr (n;S)$ 
factors as the composite
\[\xymatrix@1{
(m;R)\ar[rr]^-{(f;id^n)} & & (n;R') \ar[rr]^-{(\id;d)} & & (n;S) \\} \]
where, with $\ph = \epz(f)$, $\mathbf{r'_j} = \sma_{\ph(i)=j}\mathbf{r_i}$.
This proves the last statement.
\end{proof}

With these constructions on hand, we define a category of ring operators
in analogy with our definition of a category of operators.  While our 
interest is in the case $\sJ=\sK\int\sD$, the general concept is convenient
conceptually.  For an injection $(\ph;\ch)\colon (m;R)\rtarr (n;S)$ in 
$\PI\int\PI$, define $\SI(\ph,\ch)$ to be the group of automorphisms 
$(\si;\ta)\colon (n;S)\rtarr (n;S)$ such that
$(\si;\ta)\text{Im}(\ph;\ta)\subset \text{Im}(\ph;\ta)$.

\begin{defn}\label{ringcatoper} A category of ring operators is a 
topological category $\sJ$ with objects those of $\PI\int\PI$
such that the inclusion $\PI\int\PI \rtarr \sF\int\sF$ factors as the
composite of an inclusion $\PI\int\PI\subset \sJ$ and a surjection
$\epz\colon \sJ\rtarr \sF\int\sF$, both of which are the identity on 
objects.  We require the maps 
$\sJ((\ell;Q),(m;R))\rtarr \sJ((\ell;Q),(n,S))$ induced by an 
injection $(\ph,\ch)\colon (m;R)\rtarr (n;S)$ in $\PI\int \PI$ to be
$\SI(\ph;\ch)$-cofibrations.  A map $\nu\colon \sI\rtarr \sJ$ of 
categories of operators is a continuous functor $\nu$ over $\sF\int\sF$ and 
under $\PI\int\PI$. It is an equivalence if each map
$\nu\colon \sI((m;R),(n;S)) \rtarr \sJ((m;R),(n;S))$
is an equivalence.
\end{defn}

When $\sJ =\hat{\sG}\int\hat{\sC}$ for an operad pair $(\sC,\sG)$, 
the cofibration condition is automatically satisfied since the maps 
in question are inclusions of components in disjoint unions. In fact,
with our new choice of details, the cofibration condition is not actually 
needed for the theory here.  So far, we have been following \cite{Mult}, 
but we now diverge and things begin to simplify.  We define $\sJ$-spaces 
without cofibration conditions and we ignore basepoints, which take care 
of themselves.  

\begin{defn}\label{Jspac} Let $\sJ$ be a category of ring operators.  
A $\sJ$-space
in $\sU$ is a continuous functor $Z\colon \sJ\rtarr \sU$, written 
$(n;S)\mapsto Z(n;S)$.  It is reduced if $Z(0;\ast)$ and 
$Z(1;0)$ are single points.  It is semi-special if the first two of
the following four conditions hold, and it is special if all four
conditions hold. 
\begin{enumerate}[(i)]
\item $Z(0;\ast)$ is aspherical.
\item The maps $\de''\colon 
Z(n;S)\rtarr \prod_{1\leq j\leq n}\, Z(1;\mathbf{s_j})$ with
coordinates induced by $(\de_j;\id)$ are equivalences.
\item $Z(1;0)$ is aspherical.
\item The maps $\de'\colon Z(1;\mathbf{s})\rtarr 
Z(1,\mathbf{1})^s$ 
with coordinates induced by $(1;\de_j)$ are equivalences. 
\end{enumerate}
It is very special if, further, the rig $\pi_0(Z(1;\mathbf{1}))$ is 
a ring.  A map $Z\rtarr W$ of $\sJ$-spaces is an equivalence if 
each $Z(n;S)\rtarr W(n;S)$ is an equivalence.
\end{defn}

\begin{rem}\label{subres} When $\sJ = \hat{\sG}\int \hat{\sC}$ for an 
operad pair $(\sC,\sG)$, the restriction of a $\sJ$-space $Z$ to the subcategory 
$\hat{\sC}$ of $\sJ$ is a $\hat{\sC}$-space $Z_{\oplus}$ and the restriction 
of $Z$ to the subcategory $\hat{\sG}$ is a $\hat{\sG}$-space $Z_{\otimes}$.
\end{rem}

\begin{defn}\label{Jsp2} Let $\sJ[\sU]$ denote the category of $\sJ$-spaces 
in $\sU$.
\end{defn}

Except for the ``very special'' notion,  Definitions \ref{Jspac} 
and \ref{Jsp2} apply
equally well if we relax our requirements on $\sJ$ to only require 
$\UP\int\UP$, rather than $\PI\int\PI$, to be contained in $\sJ$ and 
do not require the map from $\sJ$ to $\sF\int\sF$ to be a surjection.  
This leads us to our new choice of ground category for the multiplicative theory.

\begin{defn}  A $(\UP\int\UP)$-space is a functor $\UP\int\UP\rtarr \sU$, 
and we write $\sW$ for the category of $(\UP\int\UP)$-spaces. Changing 
notations from the additive theory, for a space $X$ we let $RX$ denote 
the $(\UP\int\UP)$-space that sends $(0;\ast)$ to a point and sends 
$(n;S)$ to $X^{s_1 \cdots s_n}$ for $n\geq 1$.
\end{defn}

Now comparisons of definitions give the following basic results, which
are \cite[2.4 and 2.6]{Mult}, where more details may be found. 

\begin{prop}\label{FFX} Let $\sJ = \sF\int\sF$.  Then the functor
$R\colon \sU\rtarr \sW$ embeds the category of commutative rig spaces
$X$ in the category of $\sJ$-spaces as the full subcategory of objects
of the form $RX$. 
\end{prop}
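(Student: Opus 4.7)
The strategy is two parallel applications of the classical correspondence between commutative monoids and those $\sF$-spaces whose underlying $\UP$-space has the form $\mathbf{n}\mapsto X^n$, coupled together via the distributivity relations built into the wreath product $\sF\int\sF$. There are three things to verify: that $RX$ extends from a $(\UP\int\UP)$-space to an $(\sF\int\sF)$-space when $X$ carries a commutative rig structure, that the resulting $R$ is fully faithful, and that every $(\sF\int\sF)$-space whose underlying $(\UP\int\UP)$-space has the form $RX$ arises this way.

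For existence of the extension, I would use Lemma \ref{contain}: $\sF\int\sF$ is generated under composition by its subcategories $\UP\int\sF$ and $\sF\int\UP$. The inclusion $\sF\hookrightarrow\UP\int\sF$ via $\mathbf{n}\mapsto(1;\mathbf{n})$ identifies the first with (essentially) a copy of $\sF$ encoding the additive structure, so the morphism $(\id;\ph_n)\colon(1;\mathbf{n})\to(1;\mathbf{1})$ should act on $RX$ as the $n$-fold sum coming from the additive commutative monoid structure on $X$. Dually, $\sF\hookrightarrow\sF\int\UP$ via $\mathbf{n}\mapsto(n;\mathbf{1}^n)$ encodes the multiplicative structure, and $(\ph_n;\id)$ acts as the $n$-fold product. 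A general morphism of $\sF\int\sF$ factors as a composite of such pieces by Lemma \ref{contain}, which determines the action on $RX$. Consistency under the composition formula of Definition \ref{wreath} --- including the permutations $\si(g,f)$ of Notations \ref{perms2} and the nullity clauses for $0$-factors --- is the statement that $X$ satisfies commutativity, associativity, distributivity, and the absorption $0\cdot x=0$, i.e., the rig axioms.

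The converse is dual. Given an $(\sF\int\sF)$-space $Z$ whose underlying $(\UP\int\UP)$-space is $RX$, restriction along the additive $\sF\hookrightarrow\sF\int\sF$ makes $X$ a commutative monoid (by the one-variable correspondence used in \S\ref{MT3}), and restriction along the multiplicative $\sF\hookrightarrow\sF\int\sF$ gives a second such structure. Functoriality of $Z$ with respect to the mixed morphisms then forces distributivity and absorption, so $X$ becomes a commutative rig and $Z=RX$ as $(\sF\int\sF)$-spaces. For fullness, a natural transformation $\al\colon RX\to RY$ is determined by $f=\al_{(1;\mathbf{1})}\colon X\to Y$, since $\UP\int\UP$-naturality forces $\al_{(n;S)}=f^{s_1\cdots s_n}$; naturality with respect to $(\id;\ph_n)$, $(\ph_n;\id)$, and the unique morphism $(0;\ast)\to(1;\mathbf{1})$ then says precisely that $f$ preserves sums, products, zero, and unit, i.e., that $f$ is a rig homomorphism.

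The principal bookkeeping obstacle is matching each composition relation in $\sF\int\sF$ to a single rig axiom on $X$, with nothing left over and no hidden inconsistencies. This is where the explicit distributivity isomorphism (and its counterpart $\nu$ of Notations \ref{perms1}(iii)) together with the nullity clauses of Definitions \ref{CLact} and \ref{CLhatact} do their work. The verification is carried out in \cite[2.4, 2.6]{Mult}; with the $(\UP\int\UP)$-based ground category adopted here, no basepoint identifications intervene, so what remains is essentially formal.
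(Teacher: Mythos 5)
Your proposal is correct and follows essentially the same route as the paper's sketch: read off addition, multiplication, and the units from the morphisms $(\id;\ph_n)$, $(\ph_n;\id)$, and the morphisms $(0;\ast)\rtarr(1;\mathbf{1})$, and use the factorization of Lemma \ref{contain} to see that the rig structure on $X$ determines the whole $(\sF\int\sF)$-action on $RX$ uniquely, deferring the detailed verification to \cite[2.4]{Mult} exactly as the paper does. One pedantic correction: there are two morphisms $(0;\ast)\rtarr(1;\mathbf{1})$ in $\PI\int\PI$, namely $(0;0)$ and $(0;\id)$, yielding the elements $0$ and $1$ respectively, so ``the unique morphism'' in your fullness argument should read ``the two morphisms'' (though your $n=0$ cases of $(\id;\ph_n)$ and $(\ph_n;\id)$ already cover both units).
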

\begin{proof}[Sketch proof]  For a $\sJ$-space $RX$, the maps 
induced by $(\id;\ph_2)\colon (1;\mathbf{2})\rtarr (1;\mathbf{1})$ and 
$(\ph_2;\id)\colon (2;\mathbf{1}^2)\rtarr (1;\mathbf{1})$ give the 
addition and multiplication $X\times X\rtarr X$. The elements $0\in X$ 
and $1\in X$ are induced by the injections 
$(0;\ast)\colon (0;\ast)\rtarr (1;\mathbf{1})$ and 
$(0;\id)\colon (0;\ast)\rtarr (1;\mathbf{1})$ 
in $\PI\int\PI$. There is a unique way to extend a given
$(\sN,\sN)$-structure on $X$ to an action of $\sJ$ on $RX$.
\end{proof}

This result means that an $(\sF\int\sF)$-space structure on $RX$ is determined 
by its restriction to a commutative rig space structure on $X$.

\begin{prop} \label{JRX} Let $\sJ = \hat{\sG}\int\hat{\sC}$
for an operad pair $(\sC,\sG)$. Then the functor $R\colon \sU\rtarr \sW$ 
embeds the category of $(\sC,\sG)$-spaces $X$ in the category of 
$\sJ$-spaces as the full subcategory of objects of the form $RX$. 
\end{prop}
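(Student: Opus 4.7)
The strategy mirrors that of Proposition \ref{FFX}, which we have just handled for the universal example $\sJ = \sF\int\sF$. In our setting we must leverage the operad pair action $\la$ of $\sG$ on $\sC$ to produce the $\sJ$-action on $RX$, and conversely to show that any $\sJ$-action on an object of the form $RX$ is determined by, and determines, a $(\sC,\sG)$-space structure on $X$.

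First I would invoke Lemma \ref{contain} with $(\sD,\sK) = (\hat{\sC},\hat{\sG})$ to obtain that $\sJ$ is generated, as a topological category, by its two subcategories $\UP\int\hat{\sC}$ and $\hat{\sG}\int\UP$. These contain respectively $\hat{\sC}$ and $\hat{\sG}$, via the inclusions displayed in that lemma. Now applying Theorem \ref{ChatC} and Proposition \ref{isocatsToo} separately to the $\hat{\sC}$ and $\hat{\sG}$ restrictions of a prospective $\sJ$-action on $RX$, and using Lemma \ref{5.7} to identify $(RX)_{(1;\mathbf{s})}$ with the $\UP$-space $RX$ at $\mathbf{s}$, we see that the restriction of a $\sJ$-action on $RX$ to $\hat{\sC}$ is the same datum as a $\sC$-action on $X$, and its restriction to $\hat{\sG}$ is the same datum as a $\sG$-action on $X$. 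The extensions along $\UP$ in each factor of $\UP\int\hat{\sC}$ and $\hat{\sG}\int\UP$ are forced by the fact that $(RX)(n;S) = X^{s_1\cdots s_n}$ and the projections $\de_i$ act by projection on powers.

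Second, since these two subcategories generate $\sJ$, the only thing left is to identify the mixed relations. By the composition formula of Lemma \ref{contain},
\[ (g;\ch)\com (\ph;d) = (1;\ch\com \la(g)(d))\com (g\ph;\si(g,\ph)), \]
so the compatibility between the $\hat{\sC}$- and $\hat{\sG}$-actions on $RX$ reduces to evaluating both sides on powers of $X$. Unwinding via Notations \ref{perms1} and the embedding $\io$ of Notations \ref{moredetails}, the shuffles $\si(g,\ph)$ become the distributivity permutations $\nu$ of Notations \ref{perms1}(iii), and the equality of the two sides on $X^{\bullet}$ becomes precisely the distributivity identity \ref{CLact}(ii). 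The remaining identities \ref{CLact}(i), (iii)--(viii) correspond, in the same translation, to functoriality of $\la(g)$, to the unit axioms for the two operads, to the equivariance axioms transcribed from \ref{CLhatact}, and to the basepoint/nullity axioms encoded by the morphisms through $(0;\ast)$ and $(1;\mathbf{0})$. Hence a $\sJ$-action on $RX$ exists, uniquely, exactly when $X$ has a $(\sC,\sG)$-space structure.

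Third, for fullness, a morphism $f\colon RX\rtarr RX'$ of $\sJ$-spaces restricts under $R$ to a $(\UP\int\UP)$-map, which by naturality with respect to the projections is determined by its value $f_{(1;\mathbf{1})}\colon X\rtarr X'$; then commutation with the $\hat{\sC}$ and $\hat{\sG}$ subcategories says that $f_{(1;\mathbf{1})}$ is a map of $(\sC,\sG)$-spaces. Conversely any such map induces a $\sJ$-map by functoriality of $R$ and the uniqueness established above.

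The main obstacle, as always in this material, is the bookkeeping in the second step: one must verify that the distributivity shuffle $\si(g,\ph)$ appearing in the composition law of $\sJ$ is the same permutation $\nu$ that appears in Definition \ref{CLact}(ii), so that the two types of relations match on the nose. This is a purely combinatorial verification done once and for all in \cite[1.9 and \S 2]{Mult}, and once it is in hand the rest of the proof is formal.
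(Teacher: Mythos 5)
Your overall strategy is the one the paper itself uses (its proof is a two--sentence sketch deferring to \cite[2.6]{Mult}): restrict the $\sJ$-action on $RX$ along the embeddings of $\hat{\sC}$ and $\hat{\sG}$ to obtain the two operad actions on $X$, then argue that the extension back to all of $\sJ$ is unique because $\UP\int\hat{\sC}$ and $\hat{\sG}\int\UP$ generate $\sJ$ and the interchange formula of Lemma \ref{contain} pins down the mixed composites. One minor point first: Theorem \ref{ChatC} and Proposition \ref{isocatsToo} are the wrong tools for your first step. Theorem \ref{ChatC} is a homotopy-category statement requiring $\SI$-freeness, whereas what you need is the strict identification of $\hat{\sC}$-actions on objects of the form $R'X$ with $\sC$-actions on $X$, which comes from Lemma \ref{5.7} together with Proposition \ref{omniold}(iii)--(iv); since you also cite Lemma \ref{5.7}, this is a citation misfire rather than a logical hole.

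The substantive gap is in your second step. You claim that, evaluated on powers of $X$, the relation $(g;\ch)\com(\ph;d) = (1;\ch\com\la(g)(d))\com(g\ph;\si(g,\ph))$ ``becomes precisely the distributivity identity \ref{CLact}(ii)'', and you account for all remaining consistency checks by the axioms (i)--(viii) of Definition \ref{CLact}. But those axioms are properties of the operad pair $(\sC,\sG)$ alone: they hold by hypothesis and are already subsumed in the existence of the action of $\hat{\sG}$ on $\hat{\sC}$ and hence of $\sJ$ itself. If functoriality of the prospective action on $RX$ reduced only to them, your argument would show that \emph{any} space carrying unrelated $\sC$- and $\sG$-actions yields a $\sJ$-action on $RX$, which is false. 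What the interchange relation actually requires on $RX$ (take $\ph=\id$ and $\ch=\id$ to see it) is that acting multiplicatively by $g$ after acting additively by the $d_j$ agrees with acting additively by the coordinates of $\la(g)(d)$ after the shuffle $\si(g,\ph)$; that is, the additive action maps must intertwine the multiplicative action through $\la$. This is exactly the compatibility condition in the definition of a $(\sC,\sG)$-space (in the monadic language of Proposition \ref{CGmon} and Theorem \ref{Beckthm}, the condition that $X$ is a $C$-algebra in $G[\sU]$), and it is where the hypothesis on $X$ enters; your bookkeeping omits it entirely. Once the interchange is identified with this compatibility (and the morphisms through $(0;\ast)$ with the unit and zero conditions), the rest of your argument --- uniqueness of the extension from generation, and fullness by restricting a map of $\sJ$-spaces to $(1;\mathbf{1})$ --- goes through as in \cite[2.6]{Mult}.
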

\begin{proof}[Sketch proof]  The restriction of an action of $\sJ$ on 
$RX$ to the operads $\sC$ and $\sG$ embedded in the subcategories 
$\hat{\sC}$ and $\hat{\sG}$ give the additive and multiplicative operad 
actions on $X$.   There is a unique way to extend a $(\sC,\sG)$-structure 
on $X$ to an action of $\sJ$ on $RX$. 
\end{proof}

Again, this means that a $(\hat{\sG}\int\hat{\sC})$-space structure on $RX$ 
is determined 
by its restriction to a $(\sC,\sG)$-space structure on $X$. 

By the same proof as those of Theorems \ref{Dchange} and \ref{Dchange2}, 
we have the following result.  It shows in particular that the homotopy
category of special $(\sF\int\sF)$-spaces is equivalent to the homotopy
category of special $(\hat{\sG}\int\hat{\sC})$-spaces, where $(\sC,\sG)$ is 
the canonical $E_{\infty}$ operad pair of \cite[\S3]{Prequel}.

\begin{thm}\label{Dchange3}  Let $\nu\colon \sI\rtarr \sJ$ be an
equivalence of categories of operators.  When restricted to the full
subcategories of special objects, the pullback of action
functor $\nu^*\colon \sJ[\sU]\rtarr \sI[\sU]$ induces an equivalence
of homotopy categories.
\end{thm}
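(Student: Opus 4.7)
The plan is to adapt the proof sketch of Theorem \ref{Dchange} verbatim to the ring operator setting, treating $\sI$ and $\sJ$ exactly as $\sD$ and $\sE$ were treated there; the extra combinatorial complexity of the object set $\PI\int\PI$ does not intrude on the homotopical argument. Concretely, I construct a fattened left adjoint $\nu_*\colon \sI[\sU]\rtarr \sJ[\sU]$ to $\nu^*$ by means of the two-sided categorical bar construction recalled in \S\ref{MT1}: for $Z\in \sI[\sU]$ and an object $(n;S)$ of $\sJ$, set
\[ (\nu_* Z)(n;S) = B\bigl(\sJ(-,(n;S)),\, \sI,\, Z\bigr), \]
where $\sJ(-,(n;S))$ is a contravariant functor on $\sI$ via pullback along $\nu$ and composition in $\sJ$, and $Z$ is a covariant functor on $\sI$.

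Next, I produce natural zigzags exhibiting $\nu^*$ and $\nu_*$ as mutually inverse on homotopy categories. For $Z\in \sI[\sU]$, the zigzag
\[ \xymatrix@1{ Z & B(\sI,\sI,Z) \ar[l]_-{\epz} \ar[rr]^-{B(\nu,\id,\id)} & & \nu^* B(\sJ,\sI,Z) = \nu^*\nu_* Z \\} \]
has left arrow a homotopy equivalence with explicit inverse coming from the simplicial contraction carried by the bar construction on a free algebra, and right arrow a levelwise weak equivalence by the levelwise equivalence hypothesis on $\nu$ combined with the standard preservation of weak equivalences by geometric realization of Reedy-good simplicial spaces. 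Symmetrically, for $W\in \sJ[\sU]$ the composite
\[ \xymatrix@1{ \nu_*\nu^*W = B(\sJ,\sI,\nu^* W) \ar[rr]^-{B(\id,\nu,\id)} & & B(\sJ,\sJ,W) \ar[r]^-{\epz} & W \\} \]
is a natural weak equivalence by the same two principles. These are the unit and counit of the $(\nu_*,\nu^*)$ adjunction, fattened up by the bar construction precisely so that the equivalence hypothesis on $\nu$ converts into a genuine weak equivalence rather than merely an isomorphism after passage to adjoint.

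The step that I expect to require the most care is the verification that the equivalences restrict correctly to the full subcategories of special objects, which is where the hypothesis is actually used. Namely, I must check that if $Z$ is a special $\sI$-space then $\nu_* Z$ is a special $\sJ$-space, and dually, so that the constructed homotopy inverse lands where it is supposed to. This amounts to verifying that the projection-induced maps $\de''\colon (\nu_* Z)(n;S) \rtarr \prod_j (\nu_* Z)(1;\mathbf{s_j})$ and $\de'\colon (\nu_* Z)(1;\mathbf{s}) \rtarr (\nu_* Z)(1;\mathbf{1})^s$ of Definition \ref{Jspac} are weak equivalences. Since finite products commute with geometric realization of proper simplicial spaces and the specialness of $Z$ gives the required equivalences on each simplicial level, these conditions propagate through the bar construction. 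Aspherical $0^{th}$ spaces behave analogously. No new homotopical input beyond what already appears in the proofs of Theorems \ref{Dchange} and \ref{Dchange2} is needed; only Lemma \ref{contain}-style bookkeeping on $\PI\int\PI$ and the standard good properties of $B(-,-,-)$ from \cite{Geo} are required.
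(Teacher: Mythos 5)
Your proposal is correct and is essentially the paper's own proof: the paper establishes Theorem \ref{Dchange3} simply by asserting that the argument of Theorems \ref{Dchange} and \ref{Dchange2} carries over verbatim, which is exactly the bar-construction zigzag you write out. Your extra verification that specialness propagates through $\nu_*$ is sound, though it already follows from the levelwise weak equivalence $Z\simeq \nu^*\nu_*Z$ together with the fact that $\nu$ is the identity on objects.
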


\section{The definition of $(\hat{\sC},\hat{\sG})$-spaces}\label{MT2too}

Recall that we are writing $\sV$ for the category of $\UP$-spaces and $\sW$
for the category of $(\UP\int\UP)$-spaces. We need a pair of adjunctions analogous 
to the adjunction relating $\sU$ and $\sV$ (originally denoted $(L,R)$) that was used to compare monads in the additive theory.  Recall that we are now writing $R$ for the evident functor $\sU\rtarr \sW$. We can factor $R$ through $\sV$.

\begin{defn}\label{RLRL} 
For a space $X$, write $R'X = \{X^n\}$ for the associated
$\UP$-space.  For a $\UP$-space $Y$, let $R''Y$ be the $(\UP\int\UP)$-space 
that sends $(0;\ast)$ to a point (the empty product) and sends $(n;S)$ to 
$Y_{s_1}\times \cdots \times Y_{s_n}$ for $n>0$. Note that $RX = R''R'X$.  
Let $L'Y$ be the space $Y_1$ (previously denoted $LY$).  For an 
$(\UP\int\UP)$-space $Z$, let $L''Z$ be the $\UP$-space given by the spaces 
$Z(1;s)$, $s\geq 0$, and let $LZ = L'L''Z = Z(1;1)$.
\end{defn} 

It is easy to see what these functors must do on morphisms. Some
details are given in \cite[4.1]{Mult}, but the ``adjunctions'' 
claimed in that result are in fact not adjunctions because of 
basepoint and injection problems analogous to the mistake pointed 
out in \S1.  The following result is an elementary unbased substitute.
Its proof relies only on the universal property of cartesian products.

\begin{lem}\label{RLRL2} The following diagram displays two adjoint 
pairs of 
functors and their composite.
\[ \xymatrix{
\sU \ar@<1ex>[r]^-{R'} \ar@/^2.2pc/[rr]^-{R} 
& \sV \ar@<1ex>[r]^-{R''} \ar@<1ex>[l]^-{L'}
& \sW  \ar@<1ex>[l]^-{L''} \ar@/^2.2pc/[ll]^-{L} \\} \]
\end{lem}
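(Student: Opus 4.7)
The plan is to verify the two adjunctions $(L', R')$ between $\sU$ and $\sV$ and $(L'', R'')$ between $\sV$ and $\sW$ separately, and then invoke the standard fact that the composite of two adjoint pairs is an adjunction. Since Definition \ref{RLRL} stipulates $R = R''R'$ and, by inspection, $L = L'L''$, the composite adjunction coincides with the outer pair in the diagram. Throughout, the key input is that $\UP$ contains only projections, so every structural map in sight is determined by coordinate projections of cartesian products.

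For the first adjunction I would produce explicit unit and counit. The counit $L'R'\rtarr \Id$ is the identity, since $L'R'X = (R'X)_1 = X^1 = X$. The unit $\et\colon Y\rtarr R'L'Y$ has $n$-th component $Y_n\rtarr Y_1^n$ given by the tuple of structural maps $Y(\de_i)$ for the projections $\de_i\colon \mathbf{n}\rtarr \mathbf{1}$ in $\UP$. By the universal property of $X^n$, any $\UP$-map $g\colon Y\rtarr R'X$ satisfies $g_n = (g_1)^n \com \et_n$, so the assignments $g\mapsto g_1$ and $f\mapsto R'f\com\et$ are mutually inverse bijections between $\sV(Y,R'X)$ and $\sU(L'Y,X)$; naturality is immediate.

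For the second adjunction I would run the same argument one level of structure up. The counit $L''R''\rtarr \Id$ is again the identity, since $L''R''Y(1;\mathbf{s}) = Y_s$ by definition of $R''$ and $L''$. The unit $\et\colon Z\rtarr R''L''Z$ has $(n;S)$-component
\[ Z(n;S) \rtarr \prod_{1\leq j\leq n} Z(1;\mathbf{s_j}) \]
with $j$-th coordinate $Z(\de_j;\id_{\mathbf{s_j}})$, using the projection morphisms $(\de_j;\id)\colon (n;S)\rtarr (1;\mathbf{s_j})$ in $\UP\int\UP$. Again the universal property of the cartesian product forces any $(\UP\int\UP)$-map $g\colon Z\rtarr R''Y$ to equal $R''h\com\et$ for $h$ its restriction to objects $(1;\mathbf{s})$, yielding the bijection $\sW(Z,R''Y)\iso \sV(L''Z,Y)$. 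Composing the two adjunctions then gives the outer adjunction with left adjoint $L'L''=L$ and right adjoint $R''R'=R$.

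The main obstacle---really the only content---is checking that the projection morphisms $(\de_j;\id)$ in $\UP\int\UP$ suffice to reduce any map into $R''Y$ to its restriction to objects $(1;\mathbf{s})$. This is exactly where the absence of injections in $\UP$ is essential: it removes the obstruction that defeated the analogous old claim \cite[1.3]{MT} over $\PI$, where the map $\mathbf{0}\rtarr \mathbf{1}$ would force factorization through $Y_1/Y_0$ and break the adjunction.
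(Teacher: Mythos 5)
Your proposal is correct and matches the paper's own (very brief) argument: the paper simply observes that the lemma "relies only on the universal property of cartesian products," with the counits being identities and the units given by the projection-induced maps $Y_n\rtarr Y_1^n$ and $Z(n;S)\rtarr \prod_j Z(1;\mathbf{s_j})$, exactly as you spell out. Your closing remark about why discarding the injections of $\PI$ rescues the adjunction is also precisely the point the paper makes when correcting \cite[1.3]{MT}.
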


Now let $(\sC,\sG)$ be an operad pair and abbreviate 
$\sJ=\hat{\sG}\int\hat{\sC}$.  
Proposition \ref{JRX} suggests the following definition of the intermediate category mentioned
in the introduction.\footnote{In \cite{Mult},
$(\hat{\sC},\hat{\sG})$-spaces were called $(\hat{\sC},\sG)$-spaces 
to emphasize the partial use of actual products implicit in 
their definition. I now feel that the earlier notation gives a 
misleading perspective.}

\begin{defn}\label{hatChatG} Let $\sJ = \hat{\sG}\int\hat{\sC}$
for an operad pair $(\sC,\sG)$. A $(\hat{\sC},\hat{\sG})$-space 
is an object $Y\in\sV$ together with a $\sJ$-space structure on
$R''Y$. It is special if $Y$ is special.  A map $f\colon Y\rtarr Y'$ 
of $(\hat{\sC},\hat{\sG})$-spaces is a map in $\sV$ such that $R''f$ 
is a map of $\sJ$-spaces. Thus, by definition, the functor $R''\colon \sV\rtarr \sW$ 
embeds the category of $(\hat{\sC},\hat{\sG})$-spaces as the full subcategory of
$\sJ$-spaces of the form $R''Y$. 
\end{defn}

\section{The monad $\bar{J}$ associated to the category $\sJ$}\label{barJstruc}

To compare $(\hat{\sC},\hat{\sG})$-spaces to $\sJ$-spaces on the one hand
and to $(\sC,\sG)$-spaces on the other, we must first analyze the monad associated to a category of ring operators.

\begin{defn}\label{Jss} Let $\bar{J}$ denote the monad on the category $\sW$ 
such that the category of ${\sJ}$-spaces is isomorphic to the category of 
$\bar{J}$-algebras in $\sW$. Define functors
$\tilde{J}\colon \sV\rtarr \sV$ and $J\colon \sU\rtarr \sU$ by
$\tilde{J} = L''\bar{J} R''$ and
$J = L'\tilde{J}R' = L\bar{J} R$.
\end{defn}

The construction of $\bar{J}$ is a special case of Construction \ref{construct}. 
Ignoring the monadic structure maps, we must find an explicit 
description of the functor $\bar{J}$ in order to relate it to the 
adjunctions of Lemma  \ref{RLRL2}.  This is where the main simplification 
of \cite{Mult} occurs.  We need some notations.\footnote{The details to follow come from \cite[\S7]{Mult}, but the combinatorial mistakes related to injections 
in $\PI\int\PI$ that begin in \cite[7.1(ii)]{Mult} have been circumvented
by avoiding basepoint identifications.}  Recall the description of 
$\hat{\sC}$ from Lemmas \ref{describe} and \ref{5.5}.

\begin{rems}
Observe that an ordered effective morphism 
$\ph\colon \mathbf{m}\rtarr \mathbf{n}$ 
in $\sF$ decomposes uniquely as $\ph=\ph_{m_1}\wed\cdots \wed \ph_{m_n}$, where
$m_j = |\ph^{-1}(j)|$ and $m_1+\cdots + m_j = m$.  Such $\ph$ determine and
are determined by partitions $M=(m_1,\cdots,m_n)$ of $m$.  In turn, for an object $(m;R)$ of 
$\UP\int\UP$, such a partition $M$ determines a partition of
$R = (\mathbf{r_1},\cdots, \mathbf{r_m})$ into  $n$ blocks, 
$R = (R_1,\cdots,R_n)$, where $R_j$ is the $j^{th}$ block subsequence of 
$m_j$ entries.  When $m=0$, we have a unique (ordered) effective morphism
$0\colon \mathbf{0}\rtarr \mathbf{n}$, a unique empty partition $M$ of 
$\mathbf{0}$, and a unique empty sequence $R$.   There are no effective
morphisms $\mathbf{m}\rtarr \mathbf{0}$ when $m>0$.
\end{rems} 

\begin{notns}\label{mRnots} Consider an object $(m;R)$ of $\UP\int\UP$, where 
$m\geq 0$ and $R = (\mathbf{r_1},\cdots, \mathbf{r_m})$ with each $r_i\geq 0$. In part (i), we use this notation but think of $(m;R)$ as $(m_j;R_j)$ where 
$1\leq j\leq n$. 
\begin{enumerate}[(i)]
\item  Fix $s\geq 0$. Say that a morphism 
$\ch\colon \sma_{1\leq i\leq m} \mathbf{r_i}\rtarr \mathbf{s}$ in $\sF$
is $R$-effective if for every $h$, $1\leq h\leq m$, and  
every $q$, $1\leq q\leq r_h$, there is a sequence $Q = (q_1,\cdots, q_m)$
in which $1\leq q_i\leq r_i$ for $1\leq i\leq m$ such that $q_h = q$ and 
$\ch(Q) \neq 0$. Let $\sE(R;s)$ denote the set of $R$-effective morphisms 
$\ch$, and define
\[ \sC(R;s) = \coprod_{\ch\in \sE(R;s)}\ 
\prod_{1\leq t\leq s}\sC(|\ch^{-1}(t)|). \]
Further, define $\SI(m;R)$ to be the group of automorphisms of $(m;R)$ in
$\UP\int\UP$.  
\item Fix $S=(\mathbf{s_1},\cdots,\mathbf{s_n})$, where $s_j\geq 0$.  
For a partition $M=(m_1,\cdots,m_n)$ of $m$ with derived partition
$R = (R_1,\cdots,R_n)$ of $R$, define
\[\sJ(M;R,S) = \prod_{1\leq j\leq n} \sG(m_j)\times \sC(R_j;s_j). \]
Further, define $\SI(M;R) = \prod_{1\leq j\leq n} \SI(m_j;R_j)\subset \SI(m;R)$.
\end{enumerate}
\end{notns}

\begin{rem}\label{gristle}  We clarify some special cases.  When $s=0$ in 
(i) and when $n=0$ in (ii), empty products of spaces are interpreted to be a single point. If $m=0$ in (i), the smash product over the empty sequence $R$ is interpreted as $\mathbf{1}$ and we allow $\ch$ to be 
$0\colon \mathbf{1}\rtarr \mathbf{0}$ or any injection 
$\mathbf{1}\rtarr \mathbf{s}$ in $\sF$.  If $m>0$ and any one 
$r_i=0$, then $R$-effectiveness forces all $r_i=0$ and we allow
$\ch = 0\colon \mathbf{0}\rtarr \mathbf{s}$.
\end{rem}

\begin{rem}\label{whistle} For later reference, we record when an
$R$-effective map $\ch$ in (i) can be in $\UP\subset \sF$ in the cases 
$s=0$ and $s=1$.  When $s=0$, we can only have $m=0$ and 
$\ch=0\colon \mathbf{1}\rtarr \mathbf{0}$ or $m>0$, all $r_i=0$, and 
$\ch = \text{id}= 0\colon \mathbf{0}\rtarr \mathbf{0}$.
When $s=1$, we can only have $m=0$ and 
$\ch = \text{id}\colon \mathbf{1}\rtarr \mathbf{1}$ or $m>0$, all $r_i=1$, 
and $\ch=\text{id}\colon \mathbf{1}\rtarr \mathbf{1}$.
\end{rem}

\begin{prop}\label{strucbarJ}  Let $Z\in \sW$.  Then 
$(\bar{J}Z)(0;\ast) = Z(0;\ast)$ and,
for $n>0$ and $S=(\mathbf{s_1},\cdots,\mathbf{s_n})$,
\[ (\bar{J}Z)(n;S) = \coprod_{(M;R)} \sJ(M;R,S)\times_{\SI(M,R)} Z(m;R), \]
where the union runs over all partitions $M=(m_1,\cdots,m_n)$ of all 
$m\geq 0$ and all sequences $R = (\mathbf{r_1},\cdots, \mathbf{r_m})$.
\end{prop}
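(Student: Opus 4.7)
The plan is to invoke the explicit coend description of the monad $\bar{J}$ supplied by Construction \ref{construct}. Specializing to $\sD=\sJ$ and $\XI=\UP\int\UP$, we have
\[ (\bar{J}Z)(n;S)\iso \sJ(-,(n;S))\otimes_{\UP\int\UP} Z, \]
the coequalizer of the two maps built from composition in $\sJ$ and from the action of $\UP\int\UP$ on $Z$. The proof then consists of writing down a canonical representative for each orbit, so that the coequalizer reduces to the displayed disjoint union.

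The case $n=0$ is immediate: $(0;\ast)$ is terminal in $\sJ$, so each $\sJ((m;R),(0;\ast))$ is a single point and the coend collapses to the colimit of $Z$ over $\UP\int\UP$. Since $(0;\ast)$ is also terminal in $\UP\int\UP$, this colimit is $Z(0;\ast)$.

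For $n>0$, unpack $(f;d)\colon (m;R)\rtarr (n;S)$ as $f=(\ph;g_1,\dots,g_n)\in\hat{\sG}(\mathbf{m},\mathbf{n})$ together with $d_j=(\ch_j;c_{j,1},\dots,c_{j,s_j})\in\hat{\sC}(\bigwedge_{\ph(i)=j}\mathbf{r_i},\mathbf{s_j})$. Apply Lemma \ref{describe} to $\ph$ to obtain $\ph=\ps\com\pi$, unique up to the $\SI(\ps)$-action, with $\ps\colon\mathbf{m_0}\rtarr\mathbf{n}$ ordered effective (encoded by the partition $M=(m_j)$ of $m_0$ with $m_j=|\ps^{-1}(j)|$) and $\pi\colon\mathbf{m}\rtarr\mathbf{m_0}$ a projection. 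Next, for each row index $i$ retained by $\pi$, extract the projection $\mathbf{r_i}\rtarr\mathbf{r_{0,\pi(i)}}$ that discards precisely those coordinates $q$ not witnessed by any tuple sent non-trivially by $\ch_{\ph(i)}$. Since $\ch_j$ annihilates any tuple containing a non-witnessed coordinate, the wedge of these row-wise projections factors $\ch_j$ as $\ch_j'\com\pi_j$ with $\ch_j'$ automatically $R_j$-effective in the sense of Notations \ref{mRnots}, where $R_j$ is the block of reduced rows corresponding to the $j^{\text{th}}$ block of $\ps$. Assembling $\pi$ with these row-wise projections produces $\al\colon(m;R)\rtarr(m_0;R_0)$ in $\UP\int\UP$. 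The coequalizer relation $(\beta\com\al,z)\sim(\beta,\al_*z)$ now replaces the representative by one whose $\sJ$-component lies in $\sJ(M;R_0,S)=\prod_j\sG(m_j)\times\sC(R_j;s_j)$ at source $(m_0;R_0)$. The only residual identification comes from the freedom in ordering the blocks of $\ps$ and of each $\ch_j'$, a combined symmetry equal to $\SI(M;R_0)=\prod_j\SI(m_j;R_j)\subset\SI(m_0;R_0)$, which is likewise absorbed through the coequalizer. Summing over all such $(M;R_0)$, relabeled as $(M;R)$, delivers the stated formula.

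The main obstacle is the combinatorial verification in the preceding paragraph: one must check that the row-wise projections extracted from the $\ch_j$'s genuinely exist (i.e.\ that $\ch_j$ factors through $\bigwedge_i\pi_{j,i}$ in $\sF$), that they assemble with $\pi$ into a bona fide $\UP\int\UP$-morphism $\al$, that the residual $\ch_j'$ satisfies the full $R_j$-effectiveness condition, and that the isotropy after normalization is exactly $\SI(M;R)$ and nothing larger. This is precisely where \cite[\S7]{Mult} went wrong: when the ambient category was $\PI\int\PI$, the injections of $\PI$ forced basepoint identifications to intrude into the factorization, entangling it with the combinatorics $\SI_\ph$ of Definition \ref{catoper}. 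Restricting to $\UP\int\UP$ strips out those injections, so each $\ch_j$ sees only genuine projections on its source and the verification becomes a direct inspection.
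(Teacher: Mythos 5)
Your argument is correct and is essentially the paper's own proof: both start from the coequalizer description of Construction \ref{construct}, normalize a morphism of $\sJ$ by factoring off a $(\UP\int\UP)$-morphism so that the underlying $\sF$-map becomes the ordered effective morphism of a partition $M$ and the components $\ch_j$ become $R_j$-effective (your one-step ``witnessed coordinates'' projection is just the paper's inductive peeling of coordinates done all at once), and then absorb the remaining permutation freedom into the passage to $\SI(M;R)$-orbits. The remaining verifications you flag (existence of the factorization, effectiveness of the residual $\ch_j'$, and the identification of the isotropy) are treated at essentially the same level of detail in the paper, so nothing further is missing.
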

\begin{proof}  We prove this by extracting correct details 
from \cite[\S7]{Mult}.  To begin with, observe that if 
$\ch'\colon \sma_{1\leq i\leq m}\mathbf{r'_i}\rtarr \mathbf{s}$ is a
map in $\sF$ that is not $R'$-effective, then it is a composite 
$\ch\com \sma_{1\leq i\leq m}\om_i$ where $\om_i\colon \mathbf{r'_i}
\rtarr \mathbf{r_i}$ is a projection and 
$\ch$ is 
$R$-effective. Indeed, suppose that $\ch'(Q)=0$ for all sequences $Q$ 
with $h^{th}$ term $q$, where $1\leq q\leq r_h$. Then 
$\ch' = (\ch'\com\sma_{1\leq i\leq m}\si_i)\com \sma_{1\leq i\leq m}\nu_i$, where $\nu_i = \si_i = \id\colon \mathbf{r_i'}\rtarr \mathbf{r_i'}$ 
for $i\neq h$, $\nu_h\colon \mathbf{r_h'}\rtarr \mathbf{r_h'-1}$
is the projection that sends $q$ to $0$ and is otherwise ordered, 
and $\si_h\colon \mathbf{r_h'-1}\rtarr \mathbf{r_h'}$ is the
ordered injection that misses $q$.  The required factorization is
obtained by repeating this construction inductively.

By Construction \ref{construct} and Definitions \ref{opercat} and  
\ref{wreath}, $(\bar{J}Z)(n;S)$ is a quotient of 
\[  \coprod_{(m;R)}\ \coprod_{(\ph;\ch)}\  \prod_{1\leq j\leq n}\ 
(\, \sG(|\ph^{-1}(j)|)\times \prod_{1\leq t\leq s_j}\sC(|\ch^{-1}_j(t)|)\, )
\times Z(m;R), \]
where $(\ph;\ch)$ runs over the morphisms $(m;R)\rtarr (n;S)$ in $\sF\int\sF$,
which means that $\ph\in\sF(\mathbf{m},\mathbf{n})$ and 
$\ch = (\ch_1,\cdots,\ch_n)$, where 
$\ch_j\in \sF(\sma_{\ph(i)=j}\mathbf{r_i},\mathbf{s_j})$.
The quotient is obtained using identifications that are induced by the morphisms of 
$\UP\int\UP$, namely the projections, which we think of as composites of proper 
projections and permutations.

In the description just given, we may restrict attention to those 
$(\ph;\ch)$ such that $\ph = \ph_{m_1}\wed\cdots\wed \ph_{m_n}$ 
for some partition $M$
of $m$ and $\ch = (\ch_1,\cdots,\ch_n)$, where $\ch_j$ is $R_j$-effective.
Indeed, if $(\ph';\ch')$ is not of this form, it factors as 
$(\ph;\ch)(\ps;\om)$ where $(\ph;\ch)$ is of this form and $\ps$ and
the coordinates of $\om$ are projections.  To construct $\ps$ and $\om$,
we use the observation above and record which elements other than $0$ of 
the sets $\mathbf{m}$ and the $\mathbf{r_j}$ are sent to $0$ by $\ph'$ and 
the $\ch_j'$.  Then $|\ph^{-1}(j)| = |(\ph')^{-1}(j)|$ for $1\leq j\leq n$,
$|\ch^{-1}(t)| = |(\ch')^{-1}(t)|$ for $1\leq t\leq s_j$, and any
morphism $(g';c')\colon (m';R')\rtarr (n;S)$ in $\sJ$ such that 
$\epz(g';c') = (\ph';\ch')$ factors as $(g;c)(\ps;\om)$ for some
morphism $(g;c)$ such that $\epz(g;c) = (\ph;\ch)$.   Up to permutations,
$(g;c) = (g';c')$ as elements of 
\[  \prod_{1\leq j\leq n}\ \sG(|\ph^{-1}(j)|)
\times \prod_{1\leq t\leq s_j}\sC(|\ch^{-1}(t)|). \]
This reduction takes account of the identifications defined using 
proper projections but ignoring permutations; the identifications defined using 
permutations are taken account of by passage to orbits over the $\SI(M;R)$.    
\end{proof}

Specializing $(n;S)$ to $(1;\mathbf{s})$ and then specializing $(1;\mathbf{s})$ 
to $(1;\mathbf{1})$ we obtain the following 
descriptions of the functors $\tilde{J} = L''\bar{J}R''$ and 
$J = L'\tilde{J}R'$. 

\begin{cor}\label{struchatJ}  Let $Y\in \sV$ and $X\in\sU$.  Then
\[   (\tilde{J}Y)_s = 
\coprod_{(m;R)} (\,\sG(m)\times \sC(R;s)\,)\times_{\SI(m;R)}
Y_{r_1}\times\cdots \times Y_{r_m}\]
and $JX$ is obtained by setting $s=1$ and replacing $Y_{r}$ by $X^{r}$. 
\end{cor}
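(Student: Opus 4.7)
The plan is to derive both formulas as direct specializations of the description of $\bar{J}$ in Proposition \ref{strucbarJ}, using nothing more than the definitions of $L''$, $L'$, $R''$, and $R'$ from Definition \ref{RLRL}.

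First I would handle $\tilde{J}Y$. Since $\tilde{J} = L''\bar{J}R''$ and, by definition of $L''$, we have $(L''Z)_s = Z(1;\mathbf{s})$, the task reduces to evaluating $(\bar{J}R''Y)(1;\mathbf{s})$. Substituting $(n;S) = (1;\mathbf{s})$ into the formula of Proposition \ref{strucbarJ} collapses most of the combinatorics: a partition $M = (m_1,\dots,m_n)$ of $m$ with $n=1$ is forced to be the trivial partition $M=(m)$, with derived decomposition $R=(R_1)=R$, so $\sJ(M;R,S) = \sG(m)\times \sC(R;s)$ and $\SI(M;R) = \SI(m;R)$ by the definitions in Notations \ref{mRnots}. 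Using $R''Y(m;R) = Y_{r_1}\times\cdots \times Y_{r_m}$ from Definition \ref{RLRL}, the coproduct runs over all $m\geq 0$ and all sequences $R$, yielding exactly the stated expression.

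For $JX$ I would then use $J = L'\tilde{J}R'$. The functor $L'$ acts as $(L'Y) = Y_1$, so I only need to specialize the formula for $(\tilde{J}Y)_s$ at $s=1$ with $Y = R'X$. Since $(R'X)_r = X^r$, the factor $Y_{r_1}\times\cdots\times Y_{r_m}$ becomes $X^{r_1}\times\cdots\times X^{r_m}$, which is what the corollary asserts. The identification $J = L\bar{J}R$ then follows from $L = L'L''$ and $R = R''R'$ (Lemma \ref{RLRL2}).

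The only step requiring any real attention is the first one: checking that the $n=1$ case of Proposition \ref{strucbarJ} matches the claimed form term by term. In particular one must verify that every partition of $m$ with a single block contributes exactly one summand and that the equivariance group reduces as claimed. This is more a matter of notational bookkeeping than of any genuine obstacle, so I do not anticipate substantial difficulty; the corollary is essentially just a name for the restriction of the formula of Proposition \ref{strucbarJ} to objects of the form $(1;\mathbf{s})$ and $(1;\mathbf{1})$.
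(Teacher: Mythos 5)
Your proposal is correct and follows essentially the same route as the paper: the corollary is obtained exactly by specializing Proposition \ref{strucbarJ} at $(n;S)=(1;\mathbf{s})$ (where the partition $M$ is forced to be the single block $(m)$, so $\sJ(M;R,S)=\sG(m)\times\sC(R;s)$ and $\SI(M;R)=\SI(m;R)$) and then at $(1;\mathbf{1})$ with $Y=R'X$, using the definitions of $L''$, $R''$, $L'$, $R'$. Nothing further is needed.
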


The passage to orbits in Proposition \ref{strucbarJ} is well-behaved by the 
following observation.  It is \cite[7.4]{Mult}, and the proof is a straightforward inspection.

\begin{lem} Assume that $\sC$ and $\sG$ are $\SI$-free. Then the
action of $\SI(m;R)$ on $\sG(m)\times \sC(R;s)$ is free.  Therefore
the action of $\SI(M;R)$ on $\sJ(M;R,S)$ is free. 
\end{lem}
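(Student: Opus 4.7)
My plan is to reduce freeness to the $\SI$-freeness of the two operads, using $R$-effectiveness of $\ch$ as the combinatorial ingredient on the $\sC$ side. First I would unpack $\SI(m;R)$: since the only automorphisms in $\UP$ are permutations, its elements are pairs $(\si;\ta)$ with $\si\in\SI_m$ and $\ta=(\ta_1,\ldots,\ta_m)$, $\ta_i\in\SI_{r_i}$, subject to the matching condition $r_{\si^{-1}(i)}=r_i$. Using the composition formulas of Definition \ref{wreath} and Notations \ref{details} to compute the action of $(\si;\ta)$ by precomposition on an element $(g;\ch;c_1,\ldots,c_s)\in\sG(m)\times\sC(R;s)$, the $\sG(m)$-coordinate transforms by the standard right action $g\mapsto g\cdot\si$, while the $\sC$-data transforms through the induced map $\la(F)(\ta)$, which at the level of $\sF$ is the appropriate block/within-block permutation of $\sma_i\mathbf{r_i}$ with trivial operadic decorations.

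If $(g;\ch;c)$ is fixed by $(\si;\ta)$, the $\sG$-coordinate gives $g\cdot\si=g$, so $\SI$-freeness of $\sG$ immediately forces $\si=\id$. With $\si=\id$ in hand, I would next analyze the remaining action of $\ta=(\ta_1,\ldots,\ta_m)$ on $(\ch;c_1,\ldots,c_s)\in\sC(R;s)$. Composition in $\hat{\sC}$ sends it to $(\ch\com\ta;\, c_1\cdot\si_1,\ldots,c_s\cdot\si_s)$, where each $\si_t$ is the permutation of the fiber $\ch^{-1}(t)$ induced by $\ta$ in the sense of Notations \ref{perms2}. Stabilization first gives $\ch\com\ta=\ch$, so $\ta$ preserves the fibers of $\ch$, and then $c_t\cdot\si_t=c_t$ for every $t$; $\SI$-freeness of $\sC$ forces each $\si_t=\id$, meaning $\ta$ acts as the identity on every fiber $\ch^{-1}(t)$ for $1\leq t\leq s$.

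The main obstacle, and the crux of the argument, is to promote triviality on every fiber to $\ta=\id^m$, and this is exactly where $R$-effectiveness enters. Suppose for contradiction that $\ta_h(q)\neq q$ for some $1\leq h\leq m$ and some $1\leq q\leq r_h$. By the definition of $R$-effectiveness (Notations \ref{mRnots}(i)), there exists a sequence $Q=(q_1,\ldots,q_m)$ with $q_h=q$, all $q_i\geq 1$, and $\ch(Q)\neq 0$. Setting $t=\ch(Q)$, we have $\ta(Q)=(\ta_1(q_1),\ldots,\ta_m(q_m))\neq Q$ since the $h$-th coordinates differ; yet $\ch(\ta(Q))=\ch(Q)=t$, so $Q$ and $\ta(Q)$ are two distinct elements of the fiber $\ch^{-1}(t)$ interchanged by $\ta$, contradicting triviality of the action on that fiber. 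Hence $\ta=\id^m$, and the action on $\sG(m)\times\sC(R;s)$ is free.

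The second assertion then follows at once from the first: $\SI(M;R)=\prod_{j=1}^{n}\SI(m_j;R_j)$ acts factorwise on $\sJ(M;R,S)=\prod_{j=1}^{n}(\sG(m_j)\times\sC(R_j;s_j))$, so freeness in each block implies freeness of the product action.
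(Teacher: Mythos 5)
Your proof is correct and is precisely the ``straightforward inspection'' that the paper leaves to the reader (it gives no argument beyond citing \cite[7.4]{Mult}): $\SI$-freeness of $\sG$ forces $\si=\id$, $\SI$-freeness of $\sC$ forces $\ta$ to act trivially on each fiber $\ch^{-1}(t)$ with $1\leq t\leq s$, and $R$-effectiveness of $\ch$ is exactly what promotes that fiberwise triviality to $\ta=\id$, with the product case following factorwise. No gaps; this fills in the omitted details faithfully.
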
  

This implies the following analogue of Lemma \ref{5.6}. 

\begin{prop} \label{5.6too} Assume that $\sC$ and $\sG$ are $\SI$-free.  
If $f\colon Z\rtarr Z'$ is an equivalence of $(\UP\int\UP)$-spaces, 
then so is $\bar{J}f$. Therefore, if $f\colon Y\rtarr Y'$ is an
equivalence of $\UP$-spaces, then so is $\tilde{J}f$, and if 
$f\colon X\rtarr X$ is an equivalence of spaces, then so is 
$Jf\colon JX\rtarr JY$. 
\end{prop}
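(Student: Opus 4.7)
The plan is to read the three claims off the explicit formula for $\bar{J}$ in Proposition \ref{strucbarJ}, using the previous lemma to handle the passage to orbits, and then deduce the assertions for $\tilde{J}$ and $J$ via the adjoint-pair decomposition $\tilde{J} = L''\bar{J} R''$ and $J = L'\tilde{J} R'$ of Definition \ref{Jss} together with Lemma \ref{RLRL2}.

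First I would fix an equivalence $f\colon Z\rtarr Z'$ of $(\UP\int\UP)$-spaces and evaluate the map $\bar{J}f$ on a fixed object $(n;S)$ using Proposition \ref{strucbarJ}. On the summand indexed by $(M;R)$, $\bar{J}f$ is induced from
\[ \id \times f(m;R) \colon \sJ(M;R,S) \times Z(m;R) \rtarr \sJ(M;R,S) \times Z'(m;R), \]
which is an $\SI(M;R)$-equivariant map and an equivalence of spaces (a product of an identity and an equivalence). By the lemma preceding this proposition, $\SI(M;R)$ acts freely on $\sJ(M;R,S)$ and hence freely on the product. The key technical step is then to invoke the standard fact that an equivariant equivalence between free $G$-spaces of the homotopy type of $G$-CW complexes descends to a weak equivalence of orbit spaces; concretely, the orbit projection is a principal $G$-bundle, and comparing the Borel fibrations $EG\times_G X \to BG$ before and after quotienting by the free action yields the conclusion via the five lemma applied to the long exact sequences of homotopy groups.

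Having established that each summand of $\bar{J}f$ is an equivalence, the fact that a disjoint union of equivalences is an equivalence completes the claim for $\bar{J}$. For $\tilde{J}$, note that if $f\colon Y\rtarr Y'$ is an equivalence of $\UP$-spaces, then $R''f$ is an equivalence of $(\UP\int\UP)$-spaces, since $(R''Y)(n;S) = Y_{s_1}\times\cdots \times Y_{s_n}$ is a cartesian product of values of $f$, and products of equivalences are equivalences. Applying the result just proved gives that $\bar{J}R''f$ is an equivalence, and $L''$ merely evaluates at $(1;\mathbf{s})$, so $\tilde{J}f = L''\bar{J}R''f$ is an equivalence. An identical argument, using $R'X = \{X^n\}$ and $L' = (-)_1$, deduces the statement for $J$ from the statement for $\tilde{J}$; alternatively, one reads it off directly from the explicit description in Corollary \ref{struchatJ}.

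The main obstacle is the orbit-passage step. One must know that $\SI(M;R)$ acts sufficiently well on both factors for the equivalence to descend; freeness on $\sJ(M;R,S)$ (which is what the preceding lemma supplies) is what makes this harmless, since freeness on one factor of a product suffices for the diagonal action to be free. The $\Sigma$-freeness hypotheses on $\sC$ and $\sG$ enter precisely at this point and nowhere else.
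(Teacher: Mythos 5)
Your argument is correct and is exactly the one the paper intends: Proposition \ref{strucbarJ} exhibits $\bar{J}Z(n;S)$ as a disjoint union of pieces $\sJ(M;R,S)\times_{\SI(M;R)}Z(m;R)$, the preceding freeness lemma makes passage to orbits by the finite group $\SI(M;R)$ preserve equivalences, and the statements for $\tilde{J}=L''\bar{J}R''$ and $J=L'\tilde{J}R'$ follow since $R''$ and $R'$ send equivalences to (product) equivalences while $L''$ and $L'$ just evaluate. The paper leaves precisely this verification implicit (``This implies \dots''), so you have simply supplied the standard details; your parenthetical $G$-CW hypothesis is unnecessary, as the free action of the finite group already gives the covering/Borel comparison you use.
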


\section{The comparison of $(\hat{\sC},\hat{\sG})$-spaces 
and $\sJ$-spaces}\label{Bonzai}

We can now compare $(\hat{\sC},\hat{\sG})$-spaces 
and $\sJ$-spaces by mimicking the comparison of $\sC$-spaces with 
$\hat{\sC}$-spaces given in Lemmas \ref{5.7} and \ref{5.8} and Theorem \ref{ChatC}.  
We need three preliminary results.

\begin{prop}\label{5.7too}  Let $Y\in \sV$. Then the natural map
$$\de''\colon \bar{J}R''Y\rtarr R''L''\bar{J}R''Y \equiv R''\tilde{J} Y$$
is an isomorphism.  Therefore $\tilde{J}$ inherits a structure of monad from 
$\bar{J}$ and the functor $R''$ embeds the category of $\tilde{J}$-algebras as the full 
subcategory of the category of $\bar{J}$-algebras consisting of those 
$\bar{J}$-algebras of the form $R''Y$.  
\end{prop}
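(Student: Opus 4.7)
The plan is to prove the isomorphism by direct combinatorial comparison of the explicit descriptions already in hand, and then to deduce the monadic and embedding statements formally from the general machinery reviewed in Appendix A. First I would evaluate both sides at a fixed object $(n;S)$ with $n>0$, since the case $n=0$ is trivial: both $(\bar{J}R''Y)(0;\ast)$ and $(R''\tilde{J}Y)(0;\ast)$ reduce to a point. By Proposition \ref{strucbarJ},
\[
(\bar{J}R''Y)(n;S) = \coprod_{(M;R)} \sJ(M;R,S)\times_{\SI(M;R)} Y_{r_1}\times\cdots\times Y_{r_m},
\]
using that $(R''Y)(m;R)=Y_{r_1}\times\cdots\times Y_{r_m}$. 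By definition of $R''$ combined with Corollary \ref{struchatJ},
\[
(R''\tilde{J}Y)(n;S)=\prod_{j=1}^{n}\coprod_{(m_j;R_j)}\bigl(\sG(m_j)\times\sC(R_j;s_j)\bigr)\times_{\SI(m_j;R_j)} Y_{r_{j,1}}\times\cdots\times Y_{r_{j,m_j}}.
\]

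The key step is to distribute the outer finite product over the inner coproducts, which is legitimate since products commute with coproducts in $\sU$. This turns the right-hand side into a coproduct indexed by $n$-tuples $(m_j;R_j)_{j=1}^{n}$; such tuples are in bijection with pairs $(M;R)$, where $M=(m_1,\dots,m_n)$ is a partition of $m=m_1+\cdots+m_n$ and $R=(R_1,\dots,R_n)$ is the corresponding block decomposition of a length-$m$ sequence. Under this bijection, $\prod_j\bigl(\sG(m_j)\times\sC(R_j;s_j)\bigr)$ is precisely $\sJ(M;R,S)$ of Notations \ref{mRnots}(ii), the product of isotropy groups $\prod_j\SI(m_j;R_j)$ is $\SI(M;R)$, and the concatenation of factors $Y_{r_{j,i}}$ is $Y_{r_1}\times\cdots\times Y_{r_m}$. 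The map $\de''$, whose $j$th coordinate is induced by the projection $(\de_j;\id)$ from $(n;S)$ to $(1;\mathbf{s_j})$, corresponds exactly to this canonical identification term-by-term, so $\de''$ is an isomorphism.

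For the second statement, once $\de''$ is an isomorphism the adjunction $(L'',R'')$ of Lemma \ref{RLRL2} transports the monad structure on $\bar{J}$ to one on $\tilde{J}=L''\bar{J}R''$: the unit is $L''(\et^{\bar{J}})R''$ and the multiplication is $L''(\mu^{\bar{J}})R''$ conjugated by the isomorphism $\de''$ (and its iterate). The embedding of $\tilde{J}[\sV]$ as the full subcategory of $\bar{J}$-algebras of the form $R''Y$ is then an instance of the general fact recorded in Proposition \ref{omniold}(iii)--(iv) of Appendix A: whenever an adjoint pair $(L'',R'')$ carries a monad $\bar{J}$ to a monad $\tilde{J}=L''\bar{J}R''$ in such a way that the canonical natural transformation $\bar{J}R''\to R''\tilde{J}$ is an isomorphism, $R''$ induces a fully faithful functor $\tilde{J}$-alg $\to\bar{J}$-alg whose essential image consists of those $\bar{J}$-algebras whose underlying object lies in the image of $R''$.

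The main obstacle is to be careful with the combinatorial edge cases that have already caused trouble in \cite{Mult}: when some $s_j=0$, or when some $r_i=0$, $R$-effectiveness (Notations \ref{mRnots}(i), Remarks \ref{gristle}, \ref{whistle}) forces the corresponding coordinates to involve the nullary operadic elements $0\in\sC(0)$ and $1\in\sG(0)$, and one must check that the bookkeeping of these factors is parallel on both sides. Because we have jettisoned basepoints by replacing $\PI$ with $\UP$ and $\sT$ with $\sU$, there are no injection-induced basepoint identifications to reconcile, so the check becomes a routine inspection rather than the delicate argument required in \cite[\S7]{Mult}.
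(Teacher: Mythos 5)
Your proposal is correct and follows essentially the same route as the paper's own proof: evaluate both sides via Proposition \ref{strucbarJ} and Corollary \ref{struchatJ}, identify them by commuting the finite product past the coproducts and assembling the block partitions $(M;R)$ from the $n$-tuples $(m_j;R_j)$, and then invoke Proposition \ref{omniold} for the monad structure on $\tilde{J}$ and the full embedding. Your explicit attention to the isotropy groups $\SI(M;R)=\prod_j\SI(m_j;R_j)$ and to the degenerate cases is a welcome amplification of the paper's terser statement, but it is not a different argument.
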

\begin{proof}
By our description of $\bar{J}$, we see that $(\bar{J}R''Y)(0;\ast)$
is a point and, for $n>0$,
\[ (\bar{J}R''Y)(n;S) 
= \coprod_{(M;R)} \prod_{1\leq j\leq n} 
(\,\sG(m_j)\times \sC(R_j;s_j)\,)\times_{\SI(m;R)}
Y_{r_1}\times\cdots \times Y_{r_m}. \]
On the other hand,
\[ (R''\tilde{J}Y)(n;S) 
= \prod_{1\leq j\leq n} \coprod_{(m;R)} 
(\,\sG(m)\times \sC(R;s_j)\,)\times_{\SI(m;R)}
Y_{r_1}\times\cdots \times Y_{r_m}. \]
The map $\de''$ gives the identification that is obtained
by commuting disjoint unions past cartesian products and assembling block partitions. 
By Proposition \ref{omniold}, the second statement is a formal consequence of the first. 
\end{proof}

We restate the second statement since it is pivotal to our later
comparison of $(\hat{\sC},\hat{\sG})$-spaces and $(\sC,\sG)$-spaces.

\begin{cor}\label{linch}  The categories of $(\hat{\sC},\hat{\sG})$-spaces
and $\tilde{J}$-algebras are isomorphic.
\end{cor}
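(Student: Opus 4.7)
The plan is to observe that this corollary is essentially an immediate consequence of Proposition \ref{5.7too} combined with Definition \ref{hatChatG}, obtained by matching up two full-subcategory characterizations. Since both sides of the claimed isomorphism are defined in terms of the functor $R''\colon \sV\rtarr \sW$ and a monadic structure, almost nothing new needs to be proved; it is a chase of definitions.

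First, I would recall that by Definition \ref{Jss}, the category of $\sJ$-spaces is by construction the category of $\bar{J}$-algebras in $\sW$, so the two terms may be used interchangeably. Next, I would unfold Definition \ref{hatChatG}: a $(\hat{\sC},\hat{\sG})$-space is exactly a pair $(Y,\xi)$ where $Y\in \sV$ and $\xi$ is a $\bar{J}$-algebra structure on $R''Y$, and a morphism $f\colon Y\rtarr Y'$ is a map in $\sV$ such that $R''f$ is a map of $\bar{J}$-algebras. Therefore, by construction, $R''$ identifies $(\hat{\sC},\hat{\sG})$-spaces with the full subcategory of $\bar{J}$-algebras in $\sW$ whose underlying $(\UP\int\UP)$-space has the form $R''Y$ for some $Y\in\sV$.

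The second half of Proposition \ref{5.7too} gives exactly the matching statement on the $\tilde{J}$-side: because the natural map $\de''\colon \bar{J}R''Y\rtarr R''\tilde{J}Y$ is an isomorphism, $\tilde{J}$ inherits a monad structure from $\bar{J}$, and $R''$ embeds the category of $\tilde{J}$-algebras as the full subcategory of $\bar{J}$-algebras whose underlying object has the form $R''Y$. (Formally this is an instance of the general transport-of-structure result Proposition \ref{omniold} applied to the adjunction $L''\dashv R''$ of Lemma \ref{RLRL2}.) Thus both $(\hat{\sC},\hat{\sG})$-spaces and $\tilde{J}$-algebras are identified, via $R''$, with the same full subcategory of the category of $\bar{J}$-algebras, and this common identification yields the desired isomorphism of categories.

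There is really no hard step to single out; the only thing that requires care is the bookkeeping to confirm that the correspondence is an isomorphism at the level of morphisms as well as of objects, i.e. that $\tilde{J}$-algebra maps $Y\rtarr Y'$ correspond precisely to maps $f$ in $\sV$ such that $R''f$ is $\bar{J}$-equivariant. This, however, is immediate from fully-faithfulness of the embedding in Proposition \ref{5.7too} and from the definition of morphisms of $(\hat{\sC},\hat{\sG})$-spaces in Definition \ref{hatChatG}. Hence the proof reduces to a one- or two-sentence citation of Proposition \ref{5.7too} and Definition \ref{hatChatG}.
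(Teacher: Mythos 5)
Your proof is correct and is exactly the paper's argument: the paper states the corollary as an immediate restatement of the second assertion of Proposition \ref{5.7too} (which, via Proposition \ref{omniold}, identifies $\tilde{J}$-algebras with the full subcategory of $\bar{J}$-algebras of the form $R''Y$) combined with Definition \ref{hatChatG} (which defines $(\hat{\sC},\hat{\sG})$-spaces as that same full subcategory). Nothing is missing.
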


There are other comparisons of functors that one might hope to make and
that fail.  We record some of them.  These failures dictate the conceptual 
outline of the theory.  They clarify why we must introduce the notion of a
semi-special $(\UP\int\UP)$-space and why we must use the intermediate 
category of $(\hat{\sC},\hat{\sG})$-spaces rather than compare 
$(\sC,\sG)$-spaces and $\sJ$-spaces directly. 

\begin{rem}\label{fail}  
Observe that $(\bar{J}Z)(1;\mathbf{s})$ depends 
on all $Z(m;R)$ and not just the $Z(1;\mathbf{s})$.  Therefore $L''\bar{J}Z$
is not isomorphic to $\tilde{J}L''Z$, in contrast to Lemma  \ref{5.7}.  Similarly, 
$(\tilde{J}Y)_1$ depends
on all $Y_s$ and not just $Y_1$.  Therefore $L'\tilde{J}Y$ is not
isomorphic to $JL'Y$.  Again, for a space $X$, $\tilde{J}R'X$ is not
isomorphic to $R'JX$. In fact, $(\tilde{J}R'X)_n$ is not even 
equivalent to $(JX)^n$.  Thus $\bar{J}Z$ need not be
special when $Z$ is special and $\tilde{J}Y$ need not be special when $Y$ 
is special. 
\end{rem}

\begin{prop}\label{5.8free} Assume that $\sC$ and $\sG$ are $\SI$-free.  
If $Z$ is a semi-special $(\UP\int\UP)$-space then so is $\bar{J}Z$, 
hence $\bar{J}$ restricts to a monad on the category of semi-special
$(\UP\int\UP)$-spaces.
\end{prop}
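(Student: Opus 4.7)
The plan is to verify the two semi-special conditions for $\bar{J}Z$ using the explicit formula of Proposition \ref{strucbarJ}. The first condition is immediate, since Proposition \ref{strucbarJ} gives $(\bar{J}Z)(0;\ast)=Z(0;\ast)$, which is aspherical by hypothesis on $Z$. The work is all in establishing condition (ii), that the map
\[
\de''\colon (\bar{J}Z)(n;S)\rtarr \prod_{1\leq j\leq n}(\bar{J}Z)(1;\mathbf{s_j})
\]
is an equivalence. The final assertion is formal: the monad unit $\et\colon Z\rtarr \bar{J}Z$ and product $\mu\colon \bar{J}\bar{J}Z\rtarr \bar{J}Z$ are natural transformations, so once the subcategory of semi-special objects is preserved by $\bar{J}$, the monadic structure restricts.

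To handle condition (ii), I would first expand the target using Proposition \ref{strucbarJ}: each factor $(\bar{J}Z)(1;\mathbf{s_j})$ is a coproduct over pairs $(m_j;R_j)$. Commuting the finite product over $j$ past the coproducts reindexes the target by the same data $(M;R)=\{(m_j;R_j)\}_j$ that indexes the source. Next I would distribute the product over the quotients, using that Lemma \ref{5.6too}'s underlying freeness statement gives that $\Sigma(M;R)=\prod_j\Sigma(m_j;R_j)$ acts freely on $\sJ(M;R,S)=\prod_j\sG(m_j)\times \sC(R_j;s_j)$. Thus the target's summand becomes
\[
\sJ(M;R,S)\times_{\Sigma(M;R)}\prod_j Z(m_j;R_j),
\]
and the map $\de''$ on each summand is induced by the natural map $Z(m;R)\rtarr \prod_j Z(m_j;R_j)$ coming from the projections $(\pi_j;\id)\colon (m;R)\rtarr (m_j;R_j)$ in $\UP\int\UP$.

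The heart of the argument is then to show that this map $Z(m;R)\rtarr \prod_j Z(m_j;R_j)$ is an equivalence. I would prove this by iterating condition (ii) of semi-specialness on $Z$. Applied to $(m;R)$, condition (ii) gives $Z(m;R)\simeq \prod_{i=1}^m Z(1;\mathbf{r_i})$; applied to each $(m_j;R_j)$, it gives $Z(m_j;R_j)\simeq \prod_{i\in R_j}Z(1;\mathbf{r_i})$, and hence $\prod_j Z(m_j;R_j)\simeq \prod_i Z(1;\mathbf{r_i})$. Since both equivalences are realized by projections $(\de_i;\id)$ whose composites through $(\pi_j;\id)$ recover the same projections on the nose, the triangle commutes and the map $Z(m;R)\rtarr\prod_j Z(m_j;R_j)$ is an equivalence. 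Passing to the orbits of the free $\Sigma(M;R)$-action (via Lemma \ref{5.6too}'s freeness statement, which gives free actions on the relevant parameter spaces) preserves the equivalence, and taking the disjoint union over $(M;R)$ finishes the proof.

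The main obstacle I anticipate is purely bookkeeping: ensuring that the shuffling of coproducts, products, and orbit spaces lines up with the map $\de''$ as defined by the coordinate projections $(\de_j;\id)$, and that the iterated use of condition (ii) yields a \emph{commuting} triangle rather than just two equivalent target spaces. Once the indexing bijection between $(M;R)$ and $\{(m_j;R_j)\}_j$ is set up carefully, and the distributivity of $\prod_j$ over $(-)\times_{\Sigma(m_j;R_j)}(-)$ is justified by the free action of $\Sigma(M;R)$ established in Lemma \ref{5.6too}, the remaining verifications are formal consequences of the functoriality of $Z$ on $\UP\int\UP$.
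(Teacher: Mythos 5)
Your argument is correct, but it takes a more computational route than the paper. The paper's proof is purely formal: since semi-specialness of $Z$ says exactly that $\de''\colon Z\rtarr R''L''Z$ is a levelwise equivalence, it applies Proposition \ref{5.6too} to the horizontal arrows $\bar{J}\de''$ and $R''L''\bar{J}\de''$ of the naturality square comparing $\de''$ on $\bar{J}Z$ with $\de''$ on $\bar{J}R''L''Z$, notes that the right-hand vertical arrow is the isomorphism of Proposition \ref{5.7too} (applied to $Y=L''Z$), and concludes by two-out-of-three that the left vertical $\de''\colon \bar{J}Z\rtarr R''L''\bar{J}Z$ is an equivalence --- no decomposition of $\bar{J}Z$ is ever revisited. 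You instead unwind Proposition \ref{strucbarJ} on both source and target, check that $\de''$ respects the reindexing of summands by $(M;R)=\{(m_j;R_j)\}$ and is induced on each summand by the block projections $Z(m;R)\rtarr \prod_j Z(m_j;R_j)$, prove that map is an equivalence by iterating condition (ii) (the on-the-nose commuting triangle via functoriality is the right justification), and pass to orbits using freeness of the $\SI(M;R)$-action. This is valid; what it buys is an explicit combinatorial picture of why the statement holds, at the cost of essentially re-deriving, for general $Z$, the identification that Proposition \ref{5.7too} records for $Z=R''Y$, and of redoing the free-orbit argument that underlies Proposition \ref{5.6too}. Two small citation points: the freeness of the $\SI(M;R)$-action on $\sJ(M;R,S)$ is the unlabeled lemma following Corollary \ref{struchatJ}, not Proposition \ref{5.6too} itself; and your final orbit-passage step (an equivariant equivalence over a free action of a finite group descends to orbit spaces) is not literally an instance of Proposition \ref{5.6too}, since $\prod_j Z(m_j;R_j)$ is not of the form occurring in $\bar{J}W$ for any $(\UP\int\UP)$-space $W$, so you should either invoke the covering-space/associated-bundle argument directly or phrase the step as the same argument that proves Proposition \ref{5.6too}. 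Neither point is a gap in substance.
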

\begin{proof} 
Applying Proposition \ref{5.6too} to the horizontal arrows in the diagram
\[ \xymatrix{
(\bar{J}Z)(n;S) \ar[rr]^-{\bar{J}\de''} \ar[d]_{\de''} 
& & (\bar{J}R'' L'' Z)(n;S) \ar[d]_{\iso}^{\de''} \\
(R''L''\bar{J}Z)(n;S) 
\ar[rr]_-{R''L''\bar{J}\de''}
& & (R''L''\bar{J}R''L'' Z)(n;S),\\} \]
we see that its left vertical arrow is an equivalence.
\end{proof}

As promised, we can now compare $(\hat{\sC},\hat{\sG})$-spaces in $\sV$ to 
$\bar{J}$-spaces in $\sW$ by simply repeating the proof of Theorem \ref{ChatC}. We again use the two-sided monadic bar construction of \cite{Geo} together with 
the monadic 
generalities in Appendix A (\S14), general properties of geometric realization, and 
the following analogue of Lemma \ref{geohat}, whose proof is just like
that of \cite[12.2]{Geo}.

\begin{lem}\label{geobar} For simplicial objects $Z$ in the category $\sW$, there is a natural 
isomorphism $\nu\colon |\bar{J}Z|\rtarr \bar{J}|Z|$ such that the
following diagrams commute.
\[\xymatrix{
|Z| \ar[r]^-{|\et|} \ar[dr]_{\et} & |\bar{J}Z| \ar[d]^{\nu}\\
 & \bar{J}|Z| \\ }
\ \ \  \text{and}\ \  \
\xymatrix{
|\bar{J}\bar{J}Z| \ar[r]^-{|\mu|} \ar[d]_{\bar{J}\nu\com\nu} 
& |\bar{J}Z | \ar[d]^{\nu} \\
\bar{J}\bar{J}|Z| \ar[r]_{\mu} &  \bar{J}Z \\ }  \]
If $(Z,\xi)$ is a simplicial $\bar{J}$-algebra, then $(|Z|,|\xi|\com\nu^{-1})$
is a $\bar{J}$-algebra.
\end{lem}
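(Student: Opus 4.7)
The plan is to exploit the explicit description of $\bar{J}Z$ from Proposition \ref{strucbarJ}, which writes $(\bar{J}Z)(n;S)$ as a coproduct over partition data $(M;R)$ of orbits $\sJ(M;R,S)\times_{\SI(M;R)} Z(m;R)$. The functor $\bar{J}$ is thus built from three operations, each applied levelwise in $(n;S)$: coproducts over partitions, cartesian products with the spaces $\sJ(M;R,S)$ of operad data (which are constant in the simplicial direction), and quotients by the finite groups $\SI(M;R)$. By the standard properties of geometric realization developed in \cite{Geo}, realization commutes with coproducts and with quotients by group actions (both being colimits), and it commutes with finite cartesian products in the compactly generated topology used throughout. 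Assembling these three compatibilities componentwise on $(n;S)$ produces the natural isomorphism $\nu\colon |\bar{J}Z|\rtarr \bar{J}|Z|$.

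Next I would verify the two compatibility diagrams. The unit $\et\colon Z\rtarr \bar{J}Z$ is the inclusion of the summand in the coproduct description corresponding to the trivial partition (with $m=n$, each $m_j=1$, each $r_i=s_i$, and with identity operad elements in $\sG$ and $\sC$). This inclusion is natural in $Z$ and constant in the simplicial direction, so $\nu\circ |\et| = \et$ follows from the definition of $\nu$ on the corresponding summand. For the multiplication $\mu\colon \bar{J}\bar{J}Z\rtarr \bar{J}Z$, which combines partitions via operadic $\ga$ in $\sG$ and $\sC$ together with the shuffle permutations of Notations \ref{perms1}, compatibility with $\nu$ is an inspection: $\mu$ is assembled from maps that are natural in $Z$ and constant in the simplicial direction, so applying the coproduct-product-orbit analysis at both levels $\bar{J}\bar{J}Z$ and $\bar{J}Z$ gives $\nu\circ|\mu| = \mu\circ (\bar{J}\nu\circ\nu)$.

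The final claim that $(|Z|, |\xi|\com\nu^{-1})$ is a $\bar{J}$-algebra whenever $(Z,\xi)$ is a simplicial one is then purely formal. The unit axiom $|\xi|\com\nu^{-1}\com\et = \id$ follows by precomposing with the first diagram, reducing to the realization of the simplicial unit axiom for $\xi$. The associativity axiom follows by combining the second diagram with the realization of the simplicial associativity axiom for $\xi$, again inserting $\nu$ and $\nu^{-1}$ as dictated by naturality.

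The main obstacle is really bookkeeping: one must check that the coproduct-product-orbit decomposition of Proposition \ref{strucbarJ} is compatible with the monadic structure maps $\et$ and $\mu$ of $\bar{J}$ in a way transparent enough that the three commutations with realization assemble coherently. Once that decomposition is granted, the argument is essentially identical to the proof of Lemma \ref{geohat} for $\hat{C}$, which in turn follows the pattern of \cite[12.2]{Geo}; no new combinatorial input is required beyond what is already encoded in the description of $\bar{J}$.
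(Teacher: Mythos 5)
Your proposal is correct and follows essentially the same route as the paper, which simply defers to the argument of \cite[12.2]{Geo}: there one uses exactly the decomposition of the monad into coproducts of orbits of products (here supplied by Proposition \ref{strucbarJ}) together with the facts that realization of simplicial spaces commutes with colimits and with finite cartesian products, and checks the unit and product diagrams by inspection of the summands. Your fleshing-out of the unit summand and of the compatibility of $\mu$ with the decomposition is precisely the bookkeeping that citation is meant to encode, so nothing is missing.
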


\begin{thm} If $\sC$ and $\sG$ are $\SI$-free, then the functor 
$R''\colon \sV\rtarr \sW$ induces an equivalence from the homotopy category 
of special $(\hat{\sC},\hat{\sG})$-spaces to the homotopy category of special 
$\sJ$-spaces.
\end{thm}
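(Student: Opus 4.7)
The plan is to mimic the proof of Theorem \ref{ChatC} verbatim, with the pair $(\tilde{J},\bar{J})$ replacing $(C,\hat{C})$ and the composite adjunction $(L'',R'')\colon \sV\rightleftarrows \sW$ playing the role that $(L,R)\colon \sU\rightleftarrows \sV$ played before. First, Corollary \ref{linch} identifies the category of $(\hat{\sC},\hat{\sG})$-spaces with the category of $\tilde{J}$-algebras in $\sV$, so it suffices to compare $\tilde{J}$-algebras to $\bar{J}$-algebras. Next, Proposition \ref{5.7too} supplies the isomorphism $\de''\colon \bar{J}R''\cong R''\tilde{J}$, which places us in exactly the monadic context of Proposition \ref{omniold} in Appendix A.

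By parts (iii) and (iv) of Proposition \ref{omniold}, the functor $R''$ embeds $\tilde{J}$-algebras as the full subcategory of $\bar{J}$-algebras of the form $R''Y$; this is Proposition \ref{JRX}-style content already recorded in Corollary \ref{linch}. By parts (i) and (ii) of the same proposition, $\tilde{J}L''$ carries a canonical structure of $\bar{J}$-functor, so we can define a candidate inverse
\[
\LA\colon \sJ[\sU]\rtarr \sV, \qquad \LA Z = B(\tilde{J}L'',\bar{J},Z),
\]
and interpret $\LA Z$ as a $(\hat{\sC},\hat{\sG})$-space via Corollary \ref{linch}. Invoking Corollaries \ref{coromni1} and \ref{coromni2} together with the properties of geometric realization proven in \cite{Geo} and Lemma \ref{geobar}, we obtain a natural diagram of $\sJ$-spaces
\[
\xymatrix@1{
Z & B(\bar{J},\bar{J},Z)\ar[r]^-{B(\de'',\id,\id)} \ar[l]_-{\epz}
& B(R''\tilde{J}L'',\bar{J},Z)\iso R''\LA Z.\\}
\]
The left arrow has a natural simplicial homotopy inverse $\et$, and when $Z$ is special the right arrow is a levelwise equivalence: at each simplicial degree $q$ it is the map $\de''\colon \bar{J}(\bar{J}^q Z)\rtarr R''L''\bar{J}(\bar{J}^q Z)$, and by Proposition \ref{5.8free} the semi-special property (condition (ii) of Definition \ref{Jspac}) is inherited by $\bar{J}(\bar{J}^q Z)$ from $\bar{J}^q Z$, which is itself semi-special by iterated application of Proposition \ref{5.8free}. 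Thus the diagram displays a natural weak equivalence between $Z$ and $R''\LA Z$ in the category of special $\sJ$-spaces.

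In the other direction, when $Z = R''Y$ for a special $(\hat{\sC},\hat{\sG})$-space $Y$, the isomorphism $\de''\colon \bar{J}R''\cong R''\tilde{J}$ identifies the previous diagram with $R''$ applied to
\[
\xymatrix@1{
Y & B(\tilde{J},\tilde{J},Y)\ar[r]^-{\iso} \ar[l]_-{\epz} & B(\tilde{J}L'',\bar{J},R''Y) = \LA R''Y, \\}
\]
in which the left arrow is a homotopy equivalence with natural inverse $\et$ and the right arrow is an isomorphism (since on $q$-simplices the map $\bar{J}^{q+1}R''Y \rtarr R''\tilde{J}\bar{J}^q R''Y$ becomes an iterated instance of $\de''$, all of which are isomorphisms by Proposition \ref{5.7too}). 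This yields a natural weak equivalence between $Y$ and $\LA R''Y$.

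The main technical obstacle is ensuring that all the relevant equivalences are produced in the category of \emph{special} objects, which forces one to track how specialness propagates through $\bar{J}$, through geometric realization, and through the bar construction. The crucial ingredient is Proposition \ref{5.8free}, which guarantees that the two-sided bar construction $B(\bar{J},\bar{J},Z)$ stays within the realm where condition (ii) of Definition \ref{Jspac} holds, so that the key map $B(\de'',\id,\id)$ really is a levelwise equivalence and then a weak equivalence after realization. Once this is in hand, the two diagrams together exhibit $R''$ and $\LA$ as inverse equivalences of homotopy categories in the standard way.
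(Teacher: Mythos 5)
Your proposal is correct and follows essentially the same route as the paper: the identification of $(\hat{\sC},\hat{\sG})$-spaces with $\tilde{J}$-algebras via Proposition \ref{5.7too}, the bar construction $B(\tilde{J}L'',\bar{J},Z)$ as the inverse functor, and the two comparison diagrams resolved by Corollaries \ref{coromni1}--\ref{coromni2}, Lemma \ref{geobar}, and Proposition \ref{5.8free}. Your extra bookkeeping of how semi-specialness propagates through the iterates $\bar{J}^qZ$ (note the target at level $q$ is really $R''\tilde{J}L''\bar{J}^qZ\iso R''L''\bar{J}R''L''\bar{J}^qZ$, which is equivalent to what you wrote by Proposition \ref{5.7too}) just makes explicit what the paper leaves implicit.
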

\begin{proof} We repeat the proof of Theorem \ref{ChatC}.  Again, Proposition \ref{5.7too} puts us 
into one of the two contexts discussed in general terms in Proposition \ref{omniold}.  
Let $Y$ be
a $(\hat{\sC},\hat{\sG})$-space and $Z$ be a ${\sJ}$-space. By Proposition \ref{omniold}, 
$\tilde{J}L''$ is a $\bar{J}$-functor, and we can define 
a functor $\LA''\colon \bar{J}[\sW]\rtarr \tilde{J}[\sV]$ by sending a 
$\bar{J}$-algebra $Z$ to the $\tilde{J}$-algebra
\[ \LA'' Z = B(\tilde{J}L'', \bar{J}, Z). \]
By Corollaries \ref{coromni1} and \ref{coromni2}, together with general 
properties of the geometric realization of simplicial spaces proven in 
\cite{Geo}, we have a diagram
\[ \xymatrix@1{
Z & B(\bar{J},\bar{J},Z)\ar[r]^-{\de''} \ar[l]_-{\epz} 
& B(R''\tilde{J}L'',\bar{J},Z)\iso R''\LA'' Z\\} \]
of $\bar{\sJ}$-spaces in which the map $\epz$ is a homotopy equivalence with natural 
homotopy inverse $\et$ and the map $\de'' = B(\de'',\id,\id)$ is an equivalence when $Z$ 
is semi-special.  Thus the diagram displays a natural weak equivalence between 
$Z$ and $R''\LA'' Z$.  When $Z=R''Y$, the displayed diagram is obtained by applying 
$R''$ to the analogous diagram
\[ \xymatrix@1{
Y & B(\tilde{J},\tilde{J},X)\ar[r]^-{\iso} \ar[l]_-{\epz} & 
B(\tilde{J}L'',\bar{J}, R''Y) = \LA'' R''Y\\} \]
of $\tilde{J}$-algebras, in which $\epz$ is a homotopy equivalence with natural
inverse $\et$. 
\end{proof}

\section{Some comparisons of monads}\label{addenda}

To clarify ideas and to set up the comparison of $(\sC,\sG)$-spaces and 
$(\hat{\sC},\hat{\sG})$-spaces, we define and compare several other monads 
and functors related to those already specified.  We again fix 
$\sJ =\hat{\sG}\int\hat{\sC}$ with associated monad $\bar{J}$ on $\sW$.  
Recall that $\tilde{J} =L''\bar{J} R''$ and $J = L'\tilde{J}R' = L\bar{J} R$. Taking $\sC$ or $\sG$ to be the operad $\sQ$ of Remark \ref{Q}, the following definition is a special case of Definition \ref{Jss}.

\begin{defn}\label{CGss} Let $\bar{C}$ denote the monad on $\sW$ whose algebras are
the $\UP\int \hat{\sC}$-spaces and let $\bar{G}$ denote the monad on $\sW$ whose algebras 
are the $\hat{\sG}\int \UP$-spaces.
\end{defn}

Similarly, the following result is a special case of Proposition \ref{5.7too}. 

\begin{prop}\label{5.7also} Let $Y\in \sV$. The natural maps
\[ \de''\colon \bar{C}R''Y\rtarr R''L''\bar{C} R'' Y
\ \ \text{and} \ \ \de''\colon \bar{G}R''Y\rtarr R''L''\bar{G} R''Y \]
are isomorphisms. Therefore the monad structures on $\bar{C}$
and $\bar{G}$ induce monad structures on $L''\bar{C} R''$ and $L''\bar{G} R''$
such that the functor $R''$ embeds the category of $L''\bar{C} R''$-algebras
$Y$ isomorphically onto the full subcategory of $\bar{C}$-algebras of 
the form $R''Y$ and embeds the category of $L''\bar{G} R''$-algebras $Y$ 
isomorphically onto the full subcategory of $\bar{G}$-algebras 
of the form $R''Y$. 
\end{prop}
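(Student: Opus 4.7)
The plan is to deduce this proposition directly from Proposition \ref{5.7too} by specializing the operad pair. By Remark \ref{Q}, $\hat{\sQ} = \UP$, so taking $\sG = \sQ$ in the construction $\hat{\sG}\int\hat{\sC}$ identifies $\sJ$ with $\UP\int\hat{\sC}$, and the associated monad on $\sW$ produced by Construction \ref{construct} is by definition $\bar{C}$; symmetrically, $\sC = \sQ$ yields $\bar{G}$. This is precisely the specialization flagged just before Definition \ref{CGss}, so the two maps $\de''$ in the statement are literally instances of the map $\de''$ of Proposition \ref{5.7too}.

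With these identifications in hand, one can either invoke Proposition \ref{5.7too} directly or, for transparency, unwind the explicit formula of Proposition \ref{strucbarJ} in the degenerate cases. For $\bar{C}$, the factors $\sG(m_j)$ are points when $m_j = 1$ and empty otherwise, so only the partition $M = (1,\cdots,1)$ of $m = n$ contributes to the coproduct defining $(\bar{C}Z)(n;S)$; when $Z = R''Y$, the remaining coproduct over sequences $R$ commutes with the $n$-fold cartesian product appearing in the definition of $R''$, producing $(R''L''\bar{C}R''Y)(n;S)$ on the nose. The same check works for $\bar{G}$, where now the factors $\sC(R_j;s_j)$ restrict $\ch_j$ to be $R_j$-effective into $\mathbf{s_j}$ with $\sC(|\ch_j^{-1}(t)|)$ trivial, and the same commutation of a coproduct past a cartesian product gives the isomorphism. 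The second clause then follows formally, exactly as in the proof of Proposition \ref{5.7too}: given the isomorphism $\de''$, the result Proposition \ref{omniold} from Appendix A transports the monad structures on $\bar{C}$ and $\bar{G}$ across $\de''$ to yield monad structures on $L''\bar{C}R''$ and $L''\bar{G}R''$ for which $R''$ is a fully faithful embedding of algebra categories whose essential image consists of those $\bar{C}$- or $\bar{G}$-algebras of the form $R''Y$.

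The only technical point worth double-checking is that the analysis of Proposition \ref{strucbarJ} — originally stated under the standing conventions $\sC(0) = \{0\}$ and $\sG(0) = \{1\}$ of Definition \ref{CLact} — actually applies in the degenerate setting where one operad is $\sQ$, whose $0$-th space is empty. This is exactly the generalization anticipated by Remark \ref{Q}: the categories $\UP\int\hat{\sC}$ and $\hat{\sG}\int\UP$ contain $\UP\int\UP$ but not $\PI\int\PI$, and the combinatorial proof of Proposition \ref{strucbarJ} nowhere uses either the zero element or the injections of $\PI\int\PI$ — it only uses projections and permutations, which both $\bar{C}$ and $\bar{G}$ carry. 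Verifying this compatibility is the main (but essentially cosmetic) obstacle; once it is recorded, no further argument is required.
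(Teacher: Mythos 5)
Your argument is correct and is essentially the paper's own: Proposition \ref{5.7also} is stated there precisely as the special case of Proposition \ref{5.7too} obtained by taking $\sG$ or $\sC$ to be the operad $\sQ$ of Remark \ref{Q} (so that $\hat{\sQ}=\UP$), with the second clause following formally from Proposition \ref{omniold}. Your added check that the analysis of Proposition \ref{strucbarJ} survives the degenerate case $\sQ(0)=\emptyset$ is exactly the point the paper leaves implicit via Remark \ref{Q}, so it is a welcome but not divergent elaboration.
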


\begin{prop}\label{keyident} 
The monad $L''\bar{C}R''$ can be identified with the monad $\hat{C}$.
\end{prop}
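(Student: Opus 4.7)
The plan is to compute $L''\bar{C}R''Y$ explicitly from the formula of Proposition \ref{strucbarJ} and match it term-by-term with the description of $\hat{C}Y$ from Lemma \ref{5.5}, then verify that the resulting isomorphism respects the monad structures.

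First, I would specialize Proposition \ref{strucbarJ} to $\bar{C}$ by taking $\sG=\sQ$, so that $\sG(k)$ is a point for $k=1$ and empty for $k\neq 1$. This collapses the sum over partitions $M=(m_1,\cdots,m_n)$ in the formula for $(\bar{C}Z)(n;S)$ to the unique partition with all $m_j=1$, forcing $m=n$ and $R=(\mathbf{r_1},\cdots,\mathbf{r_n})$ with each block $R_j=(\mathbf{r_j})$. Since $\SI(1;\mathbf{r})\iso \SI_r$, the formula becomes
\[ (\bar{C}Z)(n;S) \iso \coprod_{R} \prod_{1\leq j\leq n}\sC((\mathbf{r_j});s_j) \times_{\SI_{r_1}\times\cdots\times\SI_{r_n}} Z(n;R). \]
Plugging in $Z=R''Y$, whose value at $(n;R)$ is $Y_{r_1}\times\cdots\times Y_{r_n}$, and evaluating at $(1;\mathbf{s})$ yields
\[ (L''\bar{C}R''Y)_s \iso \coprod_{r\geq 0} \sC((\mathbf{r});s) \times_{\SI_r} Y_r. \]

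Next, I would unravel $\sC((\mathbf{r});s)$ using Notations \ref{mRnots}. When $m=1$ and $R=(\mathbf{r})$, the $R$-effectiveness condition on $\ch\colon\mathbf{r}\rtarr\mathbf{s}$ reduces to $\ch^{-1}(0)=0$, i.e., $\ch$ is effective in the sense of Notations \ref{effect}. Invoking Lemma \ref{describe}, every such $\ch$ factors as $\ps\com\ta$ with $\ps$ ordered effective and $\ta\in\SI_r$, uniquely up to $\SI(\ps)$. Passing to $\SI_r$-orbits in the formula above therefore produces
\[ \coprod_{\ps\in\sE(\mathbf{r},\mathbf{s}),\ \text{ordered}} \prod_{1\leq t\leq s}\sC(|\ps^{-1}(t)|) \times_{\SI(\ps)} Y_r, \]
which is exactly $(\hat{C}Y)_s$ by Lemma \ref{5.5}. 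The edge cases ($s=0$, and $r=0$ with $s\geq 1$) are handled by Remark \ref{gristle} and give $(L''\bar{C}R''Y)_0 = Y_0 = (\hat{C}Y)_0$, matching the formula in Lemma \ref{5.5}. This produces a natural isomorphism of functors $L''\bar{C}R'' \iso \hat{C}$ on $\sV$.

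Finally, I would verify that this isomorphism is compatible with the monad structures. The monad structure on $L''\bar{C}R''$ is inherited from $\bar{C}$ via Proposition \ref{5.7also}, and is induced through Construction \ref{construct} by the composition in $\UP\int\hat{\sC}$. On the morphisms involved in the computation above, the $\UP$-component of any morphism $(1;\mathbf{r})\rtarr (1;\mathbf{s})$ is forced to be the identity on $\mathbf{1}$, so composition in $\UP\int\hat{\sC}$ restricts to composition in $\hat{\sC}$ between $\mathbf{r}$ and $\mathbf{s}$; the unit at $(1;\mathbf{r})$ likewise corresponds to the unit at $\mathbf{r}$ in $\hat{\sC}$. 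Hence the unit and product of $L''\bar{C}R''$ agree with those of $\hat{C}$. The main obstacle is tracking the multiplication: iterated applications of $\bar{C}$ involve nested partitions and the shuffle permutations $\si_k(\ps,\ph)$ of Notations \ref{perms2}, and one must check that the composition of effective morphisms in $\sF$ via $\ga$ in $\sC$ as encoded in Notations \ref{details} is preserved. However, in our degenerate case where all $\sG$-factors are trivial and $n=1$, the shuffle permutations collapse and the bookkeeping reduces to exactly the formula already used to define $\mu\colon \hat{C}\hat{C}\rtarr \hat{C}$, making compatibility essentially automatic.
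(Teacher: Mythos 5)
Your proposal is correct and takes essentially the same route as the paper: the paper's proof is just the terse observation that specializing Corollary \ref{struchatJ} to $\sG=\sQ$ identifies the underlying functor with that of $\hat{C}$ (Lemma \ref{5.5}), with the monad structure maps agreeing because both are induced by the structure maps of $\sC$. Your computation of $(L''\bar{C}R''Y)_s$, the reduction of $R$-effective maps to ordered effective maps modulo $\SI(\ps)$ via Lemma \ref{describe}, and the remark that composition in $\UP\int\hat{\sC}$ between objects $(1;\mathbf{r})$ collapses to composition in $\hat{\sC}$ simply supply the details of that inspection.
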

\begin{proof}
Inspection of the case $\sG = \sQ$ of Corollary \ref{struchatJ} makes 
clear that the underlying functors can be identified.  The structure
maps of the monads agree under the identifications since they are induced 
by the structure maps of the operad $\sC$.  
\end{proof}

The analogue for $\sG$ is not true, and we introduce an abbreviated notation.

\begin{defn}\label{Grudge} Define $\tilde{G}$ to be the monad 
$L''\bar{G}R''$ and let $\tilde{G}[\sV]$ denote the category of
$\tilde{G}$-algebras in $\sV$.
\end{defn}
 
We now appeal to Beck's results on distributivity and monads, which
are summarized in Theorem \ref{Beckthm} below.  We used monads in $\sV$ and 
$\sW$ to compare $(\hat{\sC},\hat{\sG})$-spaces to $\sJ$-spaces and we 
will use monads in $G[\sU]$ and $\tilde{G}[\sV]$ to compare 
$(\sC,\sG)$-spaces to $(\hat{\sC},\hat{\sG})$-spaces in the next
section. Beck's results will
allow us to complete that comparison conceptually.  

For any pair of monads, $C$ and $G$ say, on the same category $\sV$, there 
is a notion of an action of $G$ 
on $C$, spelled out in Definition \ref{CGact}.  When $G$ acts on $C$, $C$ restricts to a 
monad on $G[\sV]$.  As is made precise in Theorem \ref{Beckthm}, it is equivalent that $CG$ is a 
monad on $\sV$ such that $CG$-algebras in $\sV$ are the same as 
$C$-algebras in $G[\sV]$.  The shift in perspective that this allows is 
crucial to our intermediate use of $(\hat{\sC},\hat{\sG})$-spaces.  

As Theorem \ref{Beckthm} also makes precise, a third equivalent condition is
that $G$ acts on $C$ if and only if there 
is a natural map $\rh\colon GC\rtarr CG$ that makes appropriate diagrams commute. We agree to call such a map $\rh$ a distributivity map since it 
encodes distributivity data. We have three results that arise from this perspective and tie things together.  The first two will be given here
and the third in the next section.  It may be helpful to the reader if we 
first list the relevant endofunctors on our three ground categories.
\begin{equation}\label{inW} 
\bar{C},\ \ \bar{G},\ \ \bar{C}\bar{G},\ \ \bar{J} \ \ \text{on} \ \ \sW
\end{equation}
\begin{equation}\label{inV}
\hat{C},\ \ \tilde{G}, \ \ \hat{C}\tilde{G}, 
\ \ \tilde{J} \ \  \text{on} \ \ \sV 
\end{equation}
\begin{equation}\label{inU}
C, \ \  G, \ \  CG,\ \ J, \ \  \text{on} \ \ \sU. 
\end{equation}
All of these functors except $J$ are monads, as we shall see, and
inclusions of operad pairs induce a number of obvious maps between them.  
Our promised three results, one for each of $\sW$, $\sV$, and $\sU$,
show how these monads and maps are related.  

\begin{thm}\label{tooW} 
There is a distributivity map $\bar{\rh}\colon \bar{G}\bar{C}\rtarr \bar{C}\bar{G}$
which makes the following diagram commute.
\[ \xymatrix{
\bar{G}\bar{C}\ar[rr]^-{\bar{\rh}} \ar[d] & & \bar{C}\bar{G} \ar[d]\\
\bar{J}\bar{J} \ar[r]_-{\bar{\mu}} & \bar{J} & 
\bar{J}\bar{J} \ar[l]^-{\bar{\mu}}. \\} \]
The composite $\bar{G}\bar{C}\rtarr \bar{J}$ in the diagram is an isomorphism
of monads on $\sW$. 
\end{thm}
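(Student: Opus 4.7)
The plan is to read off the three arrows of the diagram from Lemma \ref{contain} and then verify the isomorphism of monads by comparing the explicit descriptions in Proposition \ref{strucbarJ}. The inclusions $\hat{\sG}\int\UP\subset\sJ$ and $\UP\int\hat{\sC}\subset\sJ$ are maps over $\UP\int\UP$ and hence, via Construction \ref{construct}, induce morphisms of monads $\iota_G\colon\bar{G}\to\bar{J}$ and $\iota_C\colon\bar{C}\to\bar{J}$. The two vertical arrows are then the composites $\bar{G}\bar{C}\xrightarrow{\iota_G\iota_C}\bar{J}\bar{J}\xrightarrow{\bar{\mu}}\bar{J}$ and $\bar{C}\bar{G}\xrightarrow{\iota_C\iota_G}\bar{J}\bar{J}\xrightarrow{\bar{\mu}}\bar{J}$; concretely, both send a pair of composable morphisms in $\sJ$ acting on an element of $Z$ to their single composite in $\sJ$ acting on the same element.

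The distributivity map $\bar\rho$ is then read directly off the commutation formula
\[(g;\chi)\com(\phi;d)=(1;\chi\com\lambda(g)(d))\com(g\phi;\sigma(g,\phi))\]
of Lemma \ref{contain}: the left-hand side presents a morphism of $\sJ$ in the order corresponding to $\bar{G}\bar{C}$ (namely, $\UP\int\hat{\sC}$ followed by $\hat{\sG}\int\UP$), while the right-hand side presents the same morphism in the order corresponding to $\bar{C}\bar{G}$. Commutativity of the square is tautological once $\bar\rho$ is defined this way, since both routes deliver the same composite in $\sJ$ acting on the same input; naturality and independence of representatives use standard functoriality of the formula.

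The substantive content is showing that the composite $\bar{G}\bar{C}\to\bar{J}$ is an isomorphism of functors. Combining Construction \ref{construct} with the degenerate instances $\sG=\sQ$ and $\sC=\sQ$ of Proposition \ref{strucbarJ} (invoking Remark \ref{Q} to license these), one writes out $(\bar{G}\bar{C}Z)(p;T)$ as a coproduct indexed by composable pairs of a $\UP\int\hat{\sC}$-morphism and a $\hat{\sG}\int\UP$-morphism, and compares term-by-term with $(\bar{J}Z)(p;T)$ as given by Proposition \ref{strucbarJ}. Surjectivity is an immediate consequence of the generation statement at the end of Lemma \ref{contain}. The main obstacle is injectivity: one must match the $\SI(M;R)$-action in Proposition \ref{strucbarJ} with the combined $\SI(m_j;R_j)$-actions inherited from the iterated coend presentation, an exercise in unpacking the block-partition structure $(M;R)$ and tracking the induced permutations $\sigma(g,\phi)$ through the rewriting formula.

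Once the iso of functors is established, the assertion that it is a morphism of monads is formal: the units agree because $\eta_{\bar{J}}$ factors through both $\iota_G$ and $\iota_C$ via the identities of $\sJ$, and the multiplication square reduces to the associativity of composition in $\sJ$ applied to a four-fold composable chain of morphisms, two from $\hat{\sG}\int\UP$ alternating with two from $\UP\int\hat{\sC}$.
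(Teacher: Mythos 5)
Your architecture matches the paper's: read $\bar{\rh}$ off the commutation formula of Lemma \ref{contain}, observe that the square commutes essentially by construction, and attack the isomorphism via surjectivity-from-generation plus a combinatorial injectivity check. But there is a genuine gap in the surjectivity step, caused by an order reversal. The explicit factorization at the end of Lemma \ref{contain} is $(f;d)=(\id;d)\com(f;\id^n)$: the $\UP\int\hat{\sC}$-morphism is applied \emph{last}, i.e.\ sits in the outer tensor factor. That exhibits every morphism of $\sJ$ as a composite in the $\bar{C}\bar{G}$ pattern (additive outside, multiplicative inside), and hence gives surjectivity of the right-hand vertical composite $\bar{C}\bar{G}\rtarr\bar{J}$, not of the left-hand one. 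For $\bar{G}\bar{C}\rtarr\bar{J}$ you would need every $(f;d)$ to factor as $(g;\ch)\com(\pi;d')$ with $\ch$ and $\pi$ projections, and the composition formula shows the additive coordinates of any such composite are forced to lie in the image of $\la(g)$ --- a genuine restriction (this is the familiar fact that in a free rig every element is a sum of products but not every element is a product of sums). So the map you propose to compare term-by-term with Proposition \ref{strucbarJ} is not surjective in general. The isomorphism of monads actually at stake is $\bar{C}\bar{G}\iso\bar{J}$, as Beck's framework (the composite monad is $CG$, additive outermost) and the parallel statement $\hat{C}\tilde{G}\iso\tilde{J}$ in Theorem \ref{tooV} confirm; your surjectivity and orbit-matching analysis should be carried out for that composite, where the appeal to Lemma \ref{contain} is exactly right.

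A second, smaller omission: you never verify that $\bar{\rh}$ satisfies the unit and multiplication diagrams of Theorem \ref{Beckthm}(iii). In this paper ``distributivity map'' is a defined term meaning precisely a natural transformation satisfying those diagrams, and that verification (diagram chases using the commutation relation) is what makes $\bar{C}\bar{G}$ a monad at all --- so that ``isomorphism of monads'' is meaningful and so that Corollary \ref{finally} can later identify $\hat{C}\tilde{G}$-algebras with $\hat{C}$-algebras in $\tilde{G}[\sV]$. Commutativity of the displayed square alone does not deliver this. Finally, note that the paper offers a way around your ``main obstacle'' of matching the $\SI(M;R)$-orbits: once one knows that $\bar{C}\bar{G}$-algebras and $\bar{J}$-algebras coincide (compatible actions of the two generating subcategories determine an action of $\sJ$), monadicity of the forgetful functor to $\sW$ forces the map of monads to be an isomorphism, with no orbit bookkeeping required.
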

\begin{proof}[Sketch proof]  Modulo our variant monads, this is a version of
\cite[6.12]{Mult}, where more details can be found.  The diagram and the 
constructions of the monads dictate the definition of $\bar{\rh}$, and a
precise formula for the map is dictated by the commutation relation in
Lemma \ref{contain}.  Diagram chases show that $\rh$ satisfies the properties of a 
distributivity map specified in Theorem \ref{Beckthm}(iii).  It follows that
$\bar{G}$ acts on $\bar{C}$, so that $\bar{C}$ is a monad on $\bar{G}[\sW]$
and $\bar{C}\bar{G}$ is a monad on $\sW$ with the same algebras. 
The displayed diagram itself implies that its composite is a map of monads.
It is surjective because $\sG\int\UP$ and $\UP\int \sC$ generate $\sJ$ under 
composition, and inspection shows that it is injective.   More conceptually,
a $\bar{J}$-space $Z$ is a $\bar{C}\bar{G}$ by pullback and, conversely,
suitably compatible actions of $\sG\int\UP$ and $\UP\int\sC$ on $Z$ 
determine an action of $\sJ$ on $Z$.  This implies that a $\bar{C}\bar{G}$-algebra 
is the same thing as a $\bar{J}$-algebra, so that the two monads have the same algebras.  
In turn, by the monadicity of the forgetful functor from $\sJ$-spaces to $\sW$ 
(as in \cite[App.A (\S14)]{Prequel}), that implies that the map of monads
$\bar{C}\bar{G} \rtarr \bar{J}$ is an isomorphism.  
\end{proof}

Using Theorem \ref{Beckthm} together with Propositions \ref{5.7also}, and 
\ref{omniold}, we find that the following result, which is a
version of \cite[6.13]{Mult}, is a formal consequence of the 
previous one.

\begin{thm}\label{tooV} The composite displayed in the following diagram
is a distributivity map $\tilde{\rh}\colon \tilde{G}\hat{C} \rtarr \hat{C}\tilde {G}$.
\[ \xymatrix{
\tilde{G}\hat{C} = L''\bar{G}R''L''\bar{C}R'' \ar[d]_{\tilde{\rh}}
 \ar[rr]^-{(L''\bar{G}\de''\bar{C}R'')^{-1}} & &
L''\bar{G}\bar{C} R''
\ar[d]^{L''\bar{\rh} R''} \\
\hat{C}\tilde{G} = L''\bar{C}R''L''\bar{G}R'' & & 
L''\bar{C}\bar{G} R''
\ar[ll]_-{L''\bar{C}\de''\bar{G}R''}\\} \]
The natural composite 
\[ \hat{C}\tilde{G}\rtarr \tilde{J}\tilde{J}
\rtarr \tilde{J} \] 
is an isomorphism of monads $\hat{C}\tilde{G}\rtarr \tilde{J}$ on $\sV$.
\end{thm}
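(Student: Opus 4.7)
The plan is to deduce both assertions formally from Theorem \ref{tooW} by applying the functor $L''(-)R''$ and exploiting the isomorphisms provided by Proposition \ref{5.7also}. First, I would check that the composite defining $\tilde{\rh}$ is well-posed: the middle arrow $(L''\bar{G}\de''\bar{C}R'')^{-1}$ makes sense because Proposition \ref{5.7also} asserts $\de''\colon \bar{C}R''\rtarr R''L''\bar{C}R''$ is an isomorphism, and horizontal composition with $L''\bar{G}$ on the left preserves isomorphisms. Symmetrically, the bottom arrow is defined using the isomorphism $\de''\colon \bar{G}R''\iso R''L''\bar{G}R''$. Hence $\tilde{\rh}$ is a well-defined natural transformation $\tilde{G}\hat{C}\rtarr \hat{C}\tilde{G}$.

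Next, I would verify that $\tilde{\rh}$ satisfies the distributivity axioms recorded in Theorem \ref{Beckthm}(iii). By Propositions \ref{5.7also} and \ref{omniold}, the monad structures on $\hat{C}=L''\bar{C}R''$ and on $\tilde{G}=L''\bar{G}R''$ are obtained by sandwiching those of $\bar{C}$ and $\bar{G}$ with $L''(-)R''$ and conjugating by $\de''$ wherever iteration of the endofunctor occurs. Each axiom diagram for $\tilde{\rh}$ is then obtained by applying $L''(-)R''$ to the corresponding axiom diagram for $\bar{\rh}$ (which holds by Theorem \ref{tooW}) and inserting the isomorphisms $\de''\bar{C}$ and $\de''\bar{G}$ in matching pairs to bridge the resulting composites. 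Naturality of $\de''$ and functoriality of $L''(-)R''$ propagate commutativity from the $\sW$-level to the $\sV$-level.

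For the second statement, I would apply $L''(-)R''$ to the isomorphism of monads $\bar{C}\bar{G}\iso\bar{J}$ from Theorem \ref{tooW}. Using $\de''\colon \bar{G}R''\iso R''\tilde{G}$ I identify
\[ L''\bar{C}\bar{G}R''\,\iso\, L''\bar{C}R''L''\bar{G}R''\,=\,\hat{C}\tilde{G}, \]
while by definition $L''\bar{J}R''=\tilde{J}$. The resulting map $\hat{C}\tilde{G}\rtarr\tilde{J}$ is therefore an isomorphism of functors. To see it coincides with the stated composite $\hat{C}\tilde{G}\rtarr\tilde{J}\tilde{J}\overset{\mu}{\rtarr}\tilde{J}$, I trace the square in Theorem \ref{tooW} through the identifications, using that $\bar{C}\rtarr\bar{J}$ and $\bar{G}\rtarr\bar{J}$ descend under $L''(-)R''$ to the inclusions $\hat{C}\rtarr\tilde{J}$ and $\tilde{G}\rtarr\tilde{J}$ of the $L''\bar{C}R''$- and $L''\bar{G}R''$-algebra categories inside $\tilde{J}$-algebras. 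Since the isomorphism respects the monad structure (which is transported from $\bar{J}$), it is an isomorphism of monads.

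The main obstacle, and the only part requiring genuine care, is the bookkeeping in the second paragraph: one must confirm that the monad-on-monad structure Beck attaches to $\hat{C}\tilde{G}$ from $\tilde{\rh}$ coincides, under the identification $L''\bar{C}\bar{G}R''\iso \hat{C}\tilde{G}$, with the monad structure pulled back from $\bar{C}\bar{G}$. This is purely formal once one trusts the $(L'',R'')$-adjunction machinery and the fact that, by Proposition \ref{5.7also}, $R''$ is fully faithful on the relevant algebra categories and so transports monad structures faithfully between the two levels.
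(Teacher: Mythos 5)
Your proposal is correct and follows exactly the route the paper takes: the paper proves Theorem \ref{tooV} by declaring it a formal consequence of Theorem \ref{tooW} via Theorem \ref{Beckthm}, Proposition \ref{5.7also}, and Proposition \ref{omniold}, which is precisely the $L''(-)R''$-transfer argument you spell out. Your elaboration of the bookkeeping (conjugating by the isomorphisms $\de''$ at each iteration of the endofunctor, and using that $R''$ embeds the algebra categories fully faithfully) is the correct content behind the paper's one-line appeal to those results.
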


The following key result is now immediate from Corollary \ref{linch} and 
Theorem \ref{Beckthm}.

\begin{cor}\label{finally} The categories of $(\hat{\sC},\hat{\sG})$-spaces, of 
$\hat{C}\tilde{G}$-algebras in $\sV$, and of $\hat{C}$-algebras in $\tilde{G}[\sV]$ 
are isomorphic.
\end{cor}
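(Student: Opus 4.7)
The plan is to chain together three identifications, each of which is now available to us. First, Corollary \ref{linch} identifies the category of $(\hat{\sC},\hat{\sG})$-spaces with the category $\tilde{J}[\sV]$ of $\tilde{J}$-algebras. This is the content of Proposition \ref{5.7too} repackaged: the subcategory of $\bar{J}$-algebras of the form $R''Y$ is exactly the image of $R''$ applied to $\tilde{J}$-algebras, and by definition this image is the category of $(\hat{\sC},\hat{\sG})$-spaces.

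Second, I would invoke Theorem \ref{tooV}, which produces an isomorphism of monads $\hat{C}\tilde{G}\rtarr \tilde{J}$ on $\sV$. An isomorphism of monads induces an isomorphism of their categories of algebras, so $\tilde{J}[\sV]$ is isomorphic to the category of $\hat{C}\tilde{G}$-algebras in $\sV$. This pins down the second of the three categories in the statement.

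Finally, I would appeal to Beck's theorem (Theorem \ref{Beckthm}). The distributivity map $\tilde{\rh}\colon \tilde{G}\hat{C}\rtarr \hat{C}\tilde{G}$ constructed in Theorem \ref{tooV} is exactly the data that, via Beck's theorem, endows $\hat{C}$ with the structure of a monad on the category $\tilde{G}[\sV]$ of $\tilde{G}$-algebras and simultaneously makes $\hat{C}\tilde{G}$ a monad on $\sV$ in such a way that the category of $\hat{C}\tilde{G}$-algebras in $\sV$ coincides with the category of $\hat{C}$-algebras in $\tilde{G}[\sV]$. Composing this with the previous isomorphism gives all three identifications.

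The proof is essentially bookkeeping once the preceding theorems are granted, so there is no real obstacle; the only thing worth double-checking is that the distributivity map produced in Theorem \ref{tooV} is precisely the one whose monad structure on $\hat{C}\tilde{G}$ agrees with the monad structure transported from $\tilde{J}$ via the isomorphism of monads. This is built into the construction of $\tilde{\rh}$ (the composite in Theorem \ref{tooV} is defined exactly so that the multiplication on $\hat{C}\tilde{G}$ matches that on $\tilde{J}$), so the chain of isomorphisms is coherent and the corollary follows.
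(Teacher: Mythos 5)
Your argument matches the paper's: the corollary is obtained exactly by chaining Corollary \ref{linch}, the monad isomorphism $\hat{C}\tilde{G}\iso\tilde{J}$ of Theorem \ref{tooV}, and Beck's Theorem \ref{Beckthm} applied to the distributivity map $\tilde{\rh}$. Your extra remark about the compatibility of $\tilde{\rh}$ with the monad structure transported from $\tilde{J}$ is a reasonable sanity check, but it is already packaged in Theorem \ref{tooV}, so the proof stands as the paper intends.
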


\section{The comparison of $(\sC,\sG)$-spaces and 
$(\hat{\sC},\hat{\sG})$-spaces}\label{Mult02}

Again, let $(\sC,\sG)$ be an operad pair, so that $\sG$ acts on $\sC$. 
We have the monads $C$ and $G$ on $\sU$ of the prequel \cite[(4.1)]{Prequel}.  
As a reminder, recall that we have changed notations from there, so that 
$CX$ here is as specified in (\ref{CX}), and similarly for $GX$.
We have isomorphisms of categories $\sC[\sU]\iso C[\sU]$ and
$\sG[\sU]\iso G[\sU]$. In \cite{Prequel}, we gave a monadic description 
of $E_{\infty}$ ring spaces using monads that take account of the 
basepoint $0$ and its role as a zero for the multiplication.  Here 
we are ignoring basepoints and, using the language of Appendix B (\S15), 
we have the following alternative version of \cite[4.8]{Prequel}.  Again, the
difference is just a question of whether or not basepoints are thought
of as preassigned. 

\begin{prop}\label{CGmon} The monad $G$ on $\sU$ acts on the 
monad $C$, so that $C$ induces a monad, also denoted $C$, 
on the category $G[\sU]$ of $G$-algebras.  The category of
$(\sC,\sG)$-spaces is isomorphic to the category of 
$C$-algebras in $G[\sU]$.  
\end{prop}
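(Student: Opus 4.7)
My plan is to prove Proposition \ref{CGmon} by directly constructing a distributivity map $\rh\colon GC \rtarr CG$ from the action $\la$ of $\sG$ on $\sC$, verifying Beck's axioms so that Theorem \ref{Beckthm} applies, and then unwrapping the resulting category of $CG$-algebras to recover the notion of a $(\sC,\sG)$-space. This is the unbased analogue of Theorem \ref{tooW} in the one-variable setting, and much of the combinatorial work parallels what was already done there.

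First I would define $\rh\colon GCX \rtarr CGX$ explicitly. A point of $GCX$ is represented by $(g;[c_1;x_1],\dots,[c_k;x_k])$ with $g\in\sG(k)$, $c_r\in\sC(j_r)$, and $x_r=(x_r^1,\dots,x_r^{j_r})\in X^{j_r}$. Using the notation of \ref{perms1}, let $Q=(q_1,\dots,q_k)$ range over $\prod_r\{1,\dots,j_r\}$ in lexicographic order and write $y_Q = (x_1^{q_1},\dots,x_k^{q_k})\in X^k$. I set
\[\rh(g;[c_1;x_1],\dots,[c_k;x_k])= \bigl[\la(g;c_1,\dots,c_k);\ \bigl([g;y_Q]\bigr)_Q\bigr],\]
viewed as a point of $\sC(j_1\cdots j_k)\times_{\SI_{j_1\cdots j_k}}(GX)^{j_1\cdots j_k}$. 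Well-definedness on orbits uses the equivariance axioms \ref{CLact}(v) and (vi): a $\SI_k$-action on $g$ is matched by the permutation $\si\langle j_1,\dots,j_k\rangle$ in $\sC$, while $\SI_{j_r}$-actions on the $c_r$ are matched by $\ta_1\otimes\cdots\otimes\ta_k$. The nullity axiom (viii) handles the case $j_r=0$ and axiom (vii) handles $k=0$.

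Next I would verify that $\rh$ satisfies Beck's four axioms for a distributivity map: compatibility with the units $\et_G$ and $\et_C$, and compatibility with the multiplications $\mu_G$ and $\mu_C$. Each axiom reduces, after unwinding definitions, to one of the operad pair axioms: $\et_C$-compatibility uses \ref{CLact}(iii), $\et_G$-compatibility uses \ref{CLact}(iv), $\mu_G$-compatibility uses the associativity \ref{CLact}(i), and $\mu_C$-compatibility uses the distributivity formula \ref{CLact}(ii) — this is where the permutation $\nu$ in Notations \ref{perms1}(iii) appears and must be tracked carefully. By Theorem \ref{Beckthm}, this distributivity map yields a monad structure on $CG$ on $\sU$, restricts $C$ to a monad on $G[\sU]$, and gives an isomorphism between the categories $(CG)[\sU]$ and $C[G[\sU]]$.

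Finally I would unwind what a $CG$-algebra is. An algebra structure $\xi\colon CGX\rtarr X$ breaks up, via $\et_C$ and $\et_G$, into a $C$-action $\xi_C\colon CX\rtarr X$ and a $G$-action $\xi_G\colon GX\rtarr X$. The single associativity $\xi\com CG\xi=\xi\com\mu_{CG}$ is equivalent, using the construction of $\mu_{CG}$ from $\mu_C$, $\mu_G$, and $\rh$, to the pair of associativities for $\xi_C$ and $\xi_G$ together with the distributivity square $\xi_C\com C\xi_G\com\rh=\xi_G\com G\xi_C$. Translating back through the isomorphisms $\sC[\sU]\iso C[\sU]$ and $\sG[\sU]\iso G[\sU]$, this distributivity square is exactly the condition that the multiplicative operation of $\sG$ distributes over the additive operation of $\sC$, which is the defining compatibility of a $(\sC,\sG)$-space.

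The main obstacle is the $\mu_C$-compatibility of $\rh$, where both sides involve iterated rearrangements and the distributivity permutation $\nu$; the verification is a bookkeeping exercise that mirrors the corresponding step in Theorem \ref{tooW} and is essentially dictated by axiom \ref{CLact}(ii). Nothing in the argument uses basepoints, which is the whole reason for having passed to unbased ground categories; this is what makes the proof simpler than the original treatment in \cite{Mult}.
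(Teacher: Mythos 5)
Your proposal is correct and follows essentially the same route as the paper: the paper's proof (by citation of \cite[4.8]{Prequel}, with details from \cite{MQR}) amounts to constructing the induced $G$-action on $CX$ for a $G$-algebra $(X,\xi)$, which is exactly $C\xi\com\rh$ for your distributivity map, and reading the verifications off from Definition \ref{CLact}; verifying Beck's formulation (iii) and then invoking Theorem \ref{Beckthm} is the same argument in an equivalent packaging. The only slip is cosmetic: in the unit checks the roles of \ref{CLact}(iii) and (iv) are transposed --- the diagram $\rh\com G\et_{C}=\et_{C}G$ needs $\la(g;\id^k)=\id$ (axiom (iv)), while $\rh\com\et_{G}C=C\et_{G}$ needs $\la(\id;c)=c$ (axiom (iii)).
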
 
\begin{proof}
Taking \cite[1.4]{Prequel} into account, the proof is the same as 
that of \cite[4.8]{Prequel}, whose missing details (from \cite[VI\S1]{MQR}) 
can be read off directly from Definition \ref{CLact}.  
\end{proof}

Since $G$ acts on $C$, Theorem \ref{Beckthm} gives a corresponding distributivity 
map.  The following result, which combines versions of \cite[6.11 
and 6.13]{Mult}, describes it.

\begin{thm}\label{tooU} The distributivity map $\rh\colon GC\rtarr CG$ 
is the composite of the maps $\rh_1$ and $\rh_2$ defined by the commutativity 
of the upper and lower rectangles in the following diagram.
\[ \xymatrix{
GC = L'\tilde{G}R'L'\hat{C}R' \ar[d]_{\rh_1} 
\ar[rr]^-{(L'\tilde{G}{\de}'\hat{C}R')^{-1}}& & 
L'\tilde{G}\hat{C} R' \ar[d]^{L'\tilde{\rh}R'} \\
J = L'\tilde{J} R' \ar[d]_{\rh_2}& & 
L'\hat{C}\tilde{G}R' \ar@{<->}[ll]_-{\iso} \ar[d]^{L'\hat{C}\de'\tilde{G}R'}\\
CG \ar@{=}[rr] & & L'\hat{C} R'L' \tilde{G} R' \\} \]
\end{thm}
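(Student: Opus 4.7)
The diagram defines $\rh_1$ and $\rh_2$, so the first task is to check that the maps indicated as inverses or isomorphisms actually are invertible. The horizontal isomorphism $L'\tilde{J}R' \iso L'\hat{C}\tilde{G}R'$ in the middle row is obtained by applying the functor $L'(-)R'$ to the isomorphism of monads $\tilde{J}\iso \hat{C}\tilde{G}$ established in Theorem \ref{tooV}. The map $L'\tilde{G}\de'\hat{C}R'$ is invertible because, by Lemma \ref{5.7} (in the notation of Section \ref{MT3}, where $L=L'$ and $R=R'$), the natural transformation $\de'\colon \hat{C}R'X\rtarr R'L'\hat{C}R'X = R'CX$ is an isomorphism for every $X\in\sU$, and $L'\tilde{G}$ preserves isomorphisms. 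With these observations, $\rh_1$ and $\rh_2$ are well-defined.

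The content of the theorem is the assertion that $\rh_2\com\rh_1$ equals the distributivity map $\rh$ arising from the action of $G$ on $C$ of Proposition \ref{CGmon}. My plan is to verify this by showing that both maps are described by the same explicit formula derived from the operad action $\la$ of Definition \ref{CLact}. On the one hand, $\rh$ is specified concretely on a generic element $[g;[c_1;x_1^{(1)},\ldots,x_{j_1}^{(1)}],\ldots,[c_k;x_1^{(k)},\ldots,x_{j_k}^{(k)}]]$ of $GCX$ (with $g\in\sG(k)$, $c_r\in\sC(j_r)$, $x_q^{(r)}\in X$) by applying the distributivity formula of Definition \ref{CLact}(ii): its image in $CGX$ has outer operadic piece $\la(g;c_1,\ldots,c_k)\in\sC(j_1\cdots j_k)$, inner $G$-formal sums of the $x_q^{(r)}$ built from $g$, and lexicographic bookkeeping governed by the permutation $\nu(\{k,j_r,i_{r,q}\})$ of Notations \ref{perms1}. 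On the other hand, tracing $\rh_2\com\rh_1$ through the diagram, $(L'\tilde{G}\de'\hat{C}R')^{-1}$ first lifts the generic element from $L'\tilde{G}R'L'\hat{C}R'X$ to $L'\tilde{G}\hat{C}R'X$; then $L'\tilde{\rh}R'$ applies the formula for $\tilde{\rh}$ from Theorem \ref{tooV}; the horizontal isomorphism identifies the outcome with an element of $JX$ described by Corollary \ref{struchatJ} with $s=1$; and finally $L'\hat{C}\de'\tilde{G}R'$ projects each inner $\tilde{G}R'$-block to the corresponding element of $GX$.

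The main obstacle will be the permutation bookkeeping that compares the two descriptions. It is encoded by Definition \ref{CLact}(ii) on the operad side and by Definition \ref{wreath} together with Lemma \ref{contain} on the category-of-ring-operators side, and was precisely the source of errors in \cite{Mult}. With the unbased formulation and the simplified description of $\bar{J}$ from Proposition \ref{strucbarJ}, however, the check amounts to matching the permutations $\si_k(\ps,\ph)$ of Notations \ref{perms2} with $\nu(\{k,j_r,i_{r,q}\})$ in the special case where $\ph$ and $\ps$ are the canonical ordered effective morphisms arising from the partitions implicit in $GC$ versus those in $CG$. Once this bijection of permutations is verified, both $\rh$ and $\rh_2\com\rh_1$ are seen to agree, and uniqueness of the distributivity map associated to an action (Theorem \ref{Beckthm}) completes the proof.
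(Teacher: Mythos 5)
Your preliminary checks are right and match what the diagram requires: $L'\tilde{G}\de'\hat{C}R'$ is invertible because $\de'\colon \hat{C}R'X\rtarr R'L'\hat{C}R'X = R'CX$ is an isomorphism by Lemma \ref{5.7} (with $(L,R)=(L',R')$), and the middle horizontal isomorphism is $L'(-)R'$ applied to the monad isomorphism $\hat{C}\tilde{G}\iso\tilde{J}$ of Theorem \ref{tooV}. So $\rh_1$ and $\rh_2$ are well defined. But your proposed verification that $\rh_2\com\rh_1=\rh$ has a genuine gap: the entire mathematical content of the comparison is the matching of the permutations $\si_k(\ps,\ph)$ of Notations \ref{perms2} against $\nu(\{k,j_r,i_{r,q}\})$ of Notations \ref{perms1}(iii), and you name this step without carrying it out. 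This is precisely the bookkeeping that caused the errors in \cite{Mult}, so it cannot be waved through. Your closing appeal to Theorem \ref{Beckthm} is also misplaced: if you have genuinely verified that the two maps are given by the same formula, uniqueness adds nothing; if you have not, uniqueness only helps after you separately verify that $\rh_2\com\rh_1$ satisfies the two diagrams of Theorem \ref{Beckthm}(iii) \emph{and} that its associated action of $G$ on $C$ is the one of Proposition \ref{CGmon} --- neither of which you address.

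You should also be aware that the paper's (tacit) argument runs in the opposite direction and avoids the combinatorics entirely. The architecture is: the distributivity data is constructed once, categorically, at the level of $\sW$ (Theorem \ref{tooW}, where $\bar{\rh}$ is dictated by the commutation relation in Lemma \ref{contain}, i.e.\ by composition in the wreath product $\sJ$); it is then transported formally along the adjunctions $(L'',R'')$ and $(L',R')$ using Theorem \ref{Beckthm} together with Propositions \ref{5.7also} and \ref{omniold}, exactly as Theorem \ref{tooV} is deduced from Theorem \ref{tooW}. The one point of contact with the operadic formulas of Definition \ref{CLact} is the identification of the transported action of $G$ on $C$ with the action of Proposition \ref{CGmon}, and that identification is already packaged in the correspondence between operad actions and category-of-operators actions in \S\ref{Mult0}. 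If you want a complete argument, either carry out the permutation matching explicitly (feasible but exactly the labor this paper is designed to eliminate), or restructure your proof as the formal transport just described.
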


Thinking of $(\sC,\sG)$-spaces as multiplicatively enriched $\sC$-spaces, 
we have in effect changed ground categories from $\sU$ to $G[\sU]$.  Since 
$\hat{\sG}$ acts on $\hat{\sC}$, as explained in \S\ref{Mult0}, 
one might well expect the monad $\hat{G}$ to act on the monad $\hat{C}$, 
but that 
is false.\footnote{Vigleik Angeltveit showed me convincingly exactly
how this fails, and he pointed out some faulty details in a purported 
description of the monad $\tilde{G}$ given in an earlier
draft of this paper.}  However, as we saw in the previous section,
the monad $\tilde{G}$ does act on $\hat{C}$.  We could extract an
explicit description of $\tilde{G}$ by specializing the explicit
description of $L''\bar{J}R''$ given in Corollary \ref{struchatJ} and using
the generalization of Remark \ref{whistle} to $s>1$.  We omit the details
since we have no need for them.  However, we observe that Remark \ref{whistle} 
implies the following result. 

\begin{lem}\label{surprise} For $Y\in \sV$, the space $(\tilde{G}Y)_0$ 
can be identified 
with $G(Y_0)$, and the space $L'Y = (\tilde{G}Y)_1$ can be identified 
with $G(Y_1)$.
\end{lem}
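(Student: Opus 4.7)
The plan is to read the claim off directly from Corollary \ref{struchatJ}, specialized to the case $\sC = \sQ$. By Remark \ref{Q}, $\hat{\sQ} = \UP$, so $\bar{G}$ is the special case of the monad $\bar{J}$ obtained from the operad pair $(\sQ,\sG)$, and the formula of Corollary \ref{struchatJ} with $\sC$ replaced by $\sQ$ gives
\[ (\tilde{G}Y)_s \;=\; \coprod_{(m;R)} \bigl(\sG(m)\times \sQ(R;s)\bigr)\times_{\SI(m;R)} Y_{r_1}\times\cdots\times Y_{r_m}. \]
Since $\sQ(1)$ is a point and $\sQ(j)$ is empty for $j\neq 1$, the factor $\sQ(R;s)$ is non-empty precisely on those $R$-effective maps $\ch\colon \sma_i \mathbf{r_i}\rtarr \mathbf{s}$ for which $|\ch^{-1}(t)|=1$ for every $1\leq t\leq s$, equivalently for which $\ch$ lies in $\UP$, and on each such $\ch$ it contributes a single point.

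The next step is to enumerate such $\ch$ when $s=0$ and $s=1$. This is exactly the content of Remark \ref{whistle}: for $s=0$ the only contributions come from $m=0$ and from $m>0$ with every $r_i=0$, while for $s=1$ the only contributions come from $m=0$ and from $m>0$ with every $r_i=1$, and in each surviving case $\ch$ is forced to be the unique candidate available. In each of these surviving cases the stabilizer $\SI(m;R)$ reduces to $\SI_m$, since the only $\UP$-automorphism of $\mathbf{0}$ or of $\mathbf{1}$ is the identity, so the only non-trivial symmetries are the permutations of the $m$ coordinates.

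Assembling the surviving summands, and using that an empty product of spaces is a single point, the two formulas collapse to
\[ (\tilde{G}Y)_0 \;=\; \coprod_{m\geq 0} \sG(m)\times_{\SI_m} Y_0^m \quad\text{and}\quad (\tilde{G}Y)_1 \;=\; \coprod_{m\geq 0} \sG(m)\times_{\SI_m} Y_1^m, \]
which by (\ref{CX}) (with $\sC$ there replaced by $\sG$) are precisely $G(Y_0)$ and $G(Y_1)$. There is no real obstacle here; the whole content of the lemma is that once $\sC$ is $\sQ$, Remark \ref{whistle} trivializes the combinatorics of the wreath product and collapses the monad $\bar{J}$ to a bare symmetric product in these two coordinates.
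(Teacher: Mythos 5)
Your proposal is correct and follows exactly the route the paper intends: the paper gives no explicit argument, merely noting that the description of $\tilde{G}$ is the specialization of Corollary \ref{struchatJ} to $\sC=\sQ$ and that Remark \ref{whistle} then ``implies the following result,'' which is precisely the computation you carry out. Your added observations (that $\sQ(R;s)$ is a point exactly when $\ch$ is forced into $\UP$, and that $\SI(m;R)$ collapses to $\SI_m$ in the surviving cases) are the right details to make the assertion explicit.
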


This implies the following analogue of Lemma  \ref{5.7}. Recall that
$(L,R)$ there is the same as $(L',R')$ here. 

\begin{lem}\label{5.7G} Let $X\in \sU$.  Then 
$(\tilde{G}R'X)_0 =G(\ast)$,
$L'\tilde{G} R'X \equiv (\tilde{G}R'X)_1 = GX$,  
and the natural map 
$L'\tilde{G}\de\colon L'\tilde{G}Y \rtarr L'\tilde{G}R'L'Y =GL'Y$ 
is an isomorphism. 
\end{lem}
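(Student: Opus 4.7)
The plan is to derive Lemma \ref{5.7G} as a direct corollary of Lemma \ref{surprise}, with essentially no new computation. For the first two assertions, I would simply specialize Lemma \ref{surprise} to $Y = R'X$. By Definition \ref{RLRL}, $(R'X)_0 = X^0$ is a point and $(R'X)_1 = X^1 = X$. Plugging these into the identifications $(\tilde{G}Y)_0 = G(Y_0)$ and $(\tilde{G}Y)_1 = G(Y_1)$ provided by Lemma \ref{surprise} immediately gives $(\tilde{G}R'X)_0 = G(\ast)$ and $L'\tilde{G}R'X = (\tilde{G}R'X)_1 = G(X) = GX$.

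For the third assertion I would appeal to naturality. Lemma \ref{surprise} is really a natural isomorphism $L'\tilde{G}(-) \iso G(L'(-))$ of functors $\sV \rtarr \sU$, since its proof amounts to unpacking Corollary \ref{struchatJ} in the special cases $s=0,1$ of Remark \ref{whistle}; the projections used to define $L'$ on both sides agree. Granting this naturality, applying $L'\tilde{G}$ to the unit $\de\colon Y \rtarr R'L'Y$ of the $(L',R')$ adjunction yields a commutative square identifying $L'\tilde{G}\de$ with $G(L'\de)\colon G(L'Y) \rtarr G(L'R'L'Y)$.

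The final step is the triangle identity for the adjunction $(L',R')$: the composite $L'Y \overto{L'\de} L'R'L'Y \overto{\epz} L'Y$ is the identity, and in our situation $\epz = \id$ on the nose, so $L'\de = \id$. Concretely, $(R'L'Y)_1 = (L'Y)^1 = L'Y = Y_1$, and the projection $\de_1$ induced by the identity morphism $\mathbf{1}\to\mathbf{1}$ in $\UP$ is the identity. Hence $L'\tilde{G}\de = G(\id) = \id$ is an isomorphism, and composing with the identification from the second part gives the desired isomorphism onto $GL'Y$.

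The one mildly nontrivial point is confirming the naturality statement needed in the middle paragraph: strictly speaking Lemma \ref{surprise} only asserts an identification of underlying spaces, so I would spend a line pointing out that the identifications in its proof are induced by the functorial construction of Corollary \ref{struchatJ} and are therefore natural in $Y$. Beyond that, everything reduces to the triangle identity, which is immediate from the explicit description of $L'$ and $R'$.
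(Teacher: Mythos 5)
Your proposal is correct and matches the paper's intent: the paper gives no separate argument for Lemma \ref{5.7G}, deriving it directly from Lemma \ref{surprise} (via Remark \ref{whistle} and Corollary \ref{struchatJ}) exactly as you do, with the naturality of the identification $L'\tilde{G}(-)\iso G(L'(-))$ and the triangle identity $L'\de = \id$ supplying the third assertion. Your explicit remark that the identification is natural in $Y$ is a reasonable point to make, and nothing further is needed.
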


Of the previous three results, only the last statement of Lemma \ref{5.7G} is
on the main line of development.
Returning to the desired comparison of $(\sC,\sG)$-spaces and
$(\hat{\sC},\hat{\sG})$-spaces, the following result puts us into 
the framework of Appendix A (\S14).

\begin{lem}\label{CuteAdj} The adjunction $(L',R')$ induces an 
adjunction
\[ G[\sU](L'Y,X)\iso \tilde{G}[\sV](Y,R'X). \]
\end{lem}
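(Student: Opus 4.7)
The plan is to lift the adjunction $L'\dashv R'$ of Lemma \ref{RLRL2} to the categories of algebras, using the natural isomorphism $\al\colon L'\tilde{G}Y\iso GL'Y$ of Lemma \ref{5.7G} as the central technical input.

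First I would lift $L'$ to a functor $\tilde{G}[\sV]\rtarr G[\sU]$. Given a $\tilde{G}$-algebra $(Y,\xi)$, the composite $GL'Y\iso L'\tilde{G}Y\overto{L'\xi}L'Y$ endows $L'Y$ with a $G$-action, and the unit and associativity axioms follow from naturality of $\al$ applied to the unit $\et_Y\colon Y\rtarr \tilde{G}Y$ and to the pair $\mu_Y,\tilde{G}\xi$, together with the fact that the monad structure maps of $\tilde{G}$ correspond under $\al$ to those of $G$. This compatibility is a formal consequence of the definition $\tilde{G}=L''\bar{G}R''$ and the monadic embedding of Proposition \ref{5.7also}. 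Functoriality in $Y$ is then immediate.

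Next I would lift $R'$ to a functor $G[\sU]\rtarr \tilde{G}[\sV]$. The cleanest route is via Proposition \ref{5.7also}: for $X\in G[\sU]=\sG[\sU]$, the space $RX=R''R'X$ admits a natural $(\hat{\sG}\int\UP)$-space structure, where morphisms in $\hat{\sG}$ act through the $\sG$-action on $X$ and morphisms in $\UP$ act by coordinate projection. This is a $\bar{G}$-algebra structure on $R''R'X$, which by the embedding statement of Proposition \ref{5.7also} pulls back to a unique $\tilde{G}$-algebra structure on $R'X$. Equivalently, one can write the structure map as the composite
\[ \tilde{G}R'X\rtarr R'L'\tilde{G}R'X\iso R'GX\overto{R'\xi_X}R'X, \]
where the first map is the unit of $L'\dashv R'$ and the middle isomorphism is the first part of Lemma \ref{5.7G}.

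Finally, the adjunction bijection is obtained by restricting the original bijection of Lemma \ref{RLRL2}. For $g\colon Y\rtarr R'X$ with adjoint $f\colon L'Y\rtarr X$, I would verify that $g$ is $\tilde{G}$-equivariant if and only if $f$ is $G$-equivariant by a diagram chase combining the naturality of $\al$, the triangle identities for $(L',R')$, and the explicit description of the $\tilde{G}$-action on $R'X$ above. The main obstacle is simply the combinatorial bookkeeping in this last chase; no new ideas are required, since the whole result is a formal consequence of having the compatible natural isomorphism $L'\tilde{G}\iso GL'$ of Lemma \ref{5.7G} between the two monads bridged by $(L',R')$.
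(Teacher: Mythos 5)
Your proposal is correct and follows essentially the same route as the paper: the paper lifts $L'$ using the isomorphism $L'\tilde{G}Y \iso GL'Y$ of Lemma \ref{5.7G}, and lifts $R'$ by noting that a $G$-action on $X$ extends uniquely to a $(\hat{\sG}\int\UP)$-space (i.e.\ $\bar{G}$-algebra) structure on $RX=R''R'X$, which the embedding of Proposition \ref{5.7also} converts into a $\tilde{G}$-algebra structure on $R'X$ --- the paper merely phrases this via the trivial operad $\sQ$ with $\hat{\sQ}=\UP$ and the analogues of Proposition \ref{JRX} and Corollary \ref{finally}. Your closing diagram chase showing the hom-set bijection restricts is the same formal verification the paper leaves implicit.
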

\begin{proof} It is obvious that $L'$ takes $\tilde{G}$-algebras $Y$ to
$G$-algebras since $L'\tilde{G}Y = GL'Y$ and we can restrict the action maps
accordingly.  We claim that $R'$ takes $G$-algebras $X$ to 
$\tilde{G}$-algebras $R'X$.  To see this conceptually, we can modify 
slightly the definition of an operad pair by allowing $\sC(0)$ to be
empty.  Then, quite trivially, any operad $\sG$ acts on our operad 
$\sQ$ such that $\hat{\sQ} = \UP$. Clearly, we can identify 
$(\sQ,\sG)$-spaces with $G$-algebras in $\sU$, and these can 
then be identified with $\hat{\sG}\int \hat{\sQ}$-spaces of the form 
$RX = R''R'X$, as in Proposition \ref{JRX}.  As in Definition \ref{hatChatG}, 
we define $(\hat{\sQ},\hat{\sG})$-spaces $Y$ to be $(\hat{\sG}\int \hat{\sQ})$-spaces 
of the form $R''Y$, and it is then obvious that $R'X$
is a $(\hat{\sQ},\hat{\sG})$-space.  Finally, as in Corollary \ref{finally},
we see that $(\hat{\sQ},\hat{\sG})$-spaces can be identified with
$\tilde{G}$ algebras in $\sV$.
\end{proof}

Since $(\hat{\sC},\hat{\sG})$-spaces are the same as $\hat{C}$-algebras
in the category of $\tilde{G}$-algebras, by Corollary \ref{finally}, Theorem \ref{ChatC} admits the following multiplicative elaboration.  In effect, we just change ground categories from 
$\sU$ and $\sV$ to $G[\sU]$ and $\tilde{G}[\sV]$.  Otherwise the proof
is exactly the same.

\begin{thm}\label{ChatCtootoo}  If $\sC$ is $\SI$-free, then the functor 
$R'\colon \sU\rtarr \sV$ induces an equivalence from the homotopy category 
of $(\sC,\sG)$-spaces to the homotopy category of special 
$(\hat{\sC},\hat{\sG})$-spaces.
\end{thm}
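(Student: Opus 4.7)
The plan is to repeat the proof of Theorem \ref{ChatC} essentially verbatim, with the ground category $\sU$ replaced by $G[\sU]$ and $\sV$ replaced by $\tilde{G}[\sV]$. The necessary framework is now in place: by Proposition \ref{CGmon}, $(\sC,\sG)$-spaces are $C$-algebras in $G[\sU]$; by Corollary \ref{finally}, special $(\hat{\sC},\hat{\sG})$-spaces are special $\hat{C}$-algebras in $\tilde{G}[\sV]$; and by Lemma \ref{CuteAdj}, the adjunction $(L',R')$ descends to an adjunction between these two categories.

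The first step is to upgrade Lemma \ref{5.7} to the multiplicative setting: for a $G$-algebra $X$, the identification $L'\hat{C}R'X = CX$ is an identification of $G$-algebras, and the map $\de\colon \hat{C}R'X\rtarr R'CX$ is an isomorphism of $\tilde{G}$-algebras. On underlying $\UP$-spaces this is just Lemma \ref{5.7}; the compatibility of the $G$- and $\tilde{G}$-actions transported across this isomorphism is exactly what is encoded by the distributivity map $\tilde{\rh}$ of Theorem \ref{tooV}, equivalently by the monad isomorphism $\hat{C}\tilde{G}\iso \tilde{J}$. This check puts us into the hypotheses of Proposition \ref{omniold}, so that $R'$ embeds $(\sC,\sG)$-spaces as the full subcategory of $(\hat{\sC},\hat{\sG})$-spaces with underlying $\tilde{G}$-algebra of the form $R'X$, and $CL'$ acquires the structure of a $\hat{C}$-functor landing in $C[G[\sU]]$.

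With those ingredients, define $\LA\colon \hat{C}[\tilde{G}[\sV]]\rtarr C[G[\sU]]$ by
\[ \LA Y = B(CL',\hat{C},Y), \]
interpreted as a two-sided monadic bar construction in the $G$-algebra ground categories. Invoking Corollaries \ref{coromni1} and \ref{coromni2} together with the geometric realization lemmas from \cite{Geo} (and, on the $\bar{J}$ side, Lemma \ref{geobar}), we obtain the zigzag of $(\hat{\sC},\hat{\sG})$-spaces
\[ \xymatrix@1{
Y & B(\hat{C},\hat{C},Y) \ar[r]^-{\de} \ar[l]_-{\epz} & B(R'CL',\hat{C},Y) \iso R'\LA Y\\} \]
in which $\epz$ is a homotopy equivalence with natural inverse $\et$, and $\de = B(\de,\id,\id)$ is an equivalence whenever $Y$ is special, since the specialness hypothesis ensures that $\de\colon Y\rtarr R'L'Y$ is a levelwise equivalence of $\UP$-spaces (and Proposition \ref{5.6too} handles the passage through $\hat{C}$). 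Applying the same construction to $Y = R'X$ and using the $\hat{C}$-functor identification $R'CL'R'X\iso R'CX$ yields the analogous natural weak equivalence $X\htp \LA R'X$ in $C[G[\sU]]$.

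The main obstacle is purely bookkeeping: verifying that $\LA Y$ and every arrow in the zigzag genuinely live in the correct ground categories $G[\sU]$ and $\tilde{G}[\sV]$, rather than only in $\sU$ and $\sV$. This is where the distributivity results of the preceding section do essential work. Theorem \ref{tooV} guarantees that $\hat{C}$ is a monad on $\tilde{G}[\sV]$ and that the simplicial object $B_\ast(CL',\hat{C},Y)$ is levelwise a simplicial $G$-algebra whose realization inherits a $G$-algebra structure through the compatibility of $\tilde{\rh}$ with geometric realization. Once this framework is accepted, everything else is a formal rerun of the argument of Theorem \ref{ChatC}, so no genuinely new analytic input is required beyond what has already been set up.
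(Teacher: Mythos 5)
Your proposal is correct and follows essentially the same route as the paper: the paper's own proof simply invokes Proposition \ref{CGmon}, Corollary \ref{finally}, and Lemma \ref{CuteAdj} to change ground categories from $\sU$ and $\sV$ to $G[\sU]$ and $\tilde{G}[\sV]$ and then declares the argument of Theorem \ref{ChatC} to carry over verbatim. You have merely spelled out the bookkeeping (the $G$-algebra enrichment of $L'\hat{C}R'X = CX$ via $\tilde{\rh}$ and the monad isomorphism $\hat{C}\tilde{G}\iso\tilde{J}$) that the paper leaves implicit.
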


This gives the bottom right pair of parallel arrows in (\ref{DIAG}).

\section{Permutative categories in infinite loop space theory}\label{Per}

We assume familiarity with the notion of a symmetric monoidal category.
That is just a (topological) category $\sA$ with a product and a unit object 
which satisfy the associativity, commutativity, and unit laws up to coherent 
natural isomorphism.  If the associativity and unit laws hold strictly,
then $\sA$ is said to be permutative.\footnote{I believe that this pleasant and appropriate 
name is due to Don Anderson \cite{DWA}.}  There is no loss of generality in
restricting to permutative categories since any (small) symmetric monoidal
category is equivalent to a permutative category \cite{Isbell, MayPer}. 
One cannot also make the commutativity law hold strictly, and it is the lack 
of strict commutativity that leads to the higher homotopies implicit in 
infinite loop space theory.  Thus permutative categories are the strictest kind of 
symmetric monoidal category that one can define without loss of generality. 

Precisely, a permutative category $\sA$ has an associative 
product $\Box$ with strict two-sided unit object $u$ and a natural 
commutativity involution $c\colon A\Box B\rtarr B\Box A$ such that
$c=\text{id}\colon A=u\Box A\rtarr A\Box u = A$ and the following
diagram commutes.
\[ \xymatrix{
A\Box B\Box C \ar[rr]^-{c}  \ar[dr] _{\text{id}\Box c} 
&& C\Box A\Box B \\
& A\Box C\Box B \ar[ur]_{c\Box\text{id}} & \\} \]
More generally, rather than having a set of objects, $\sA$ might be an
internal category in $\sU$, so that it has a space of objects and 
continuous source, target, identity, and composition maps.

A functor $F\colon \sA\rtarr \sB$ between symmetric monoidal categories
is lax symmetric monoidal if there is a map
$\al\colon u_{\sB}\rtarr F(u_{\sA})$ and a natural transformation
\[ \PH\colon \Box_{\sB}\com F\times F \rtarr F\com\Box_{\sA} \]
of functors $\sA\times \sA\rtarr \sB$ satisfying appropriate 
coherence conditions.  An op-lax functor is defined similarly, 
but with maps going in the other direction. We say that $F$ is strong
(instead of lax or op-lax) if $\al$ and $\PH$ are isomorphisms and that
$F$ is strict if $\al$ and $\PH$ are identities.  The strict notion
is only interesting when $\sA$ and $\sB$ are permutative.

The relationship between permutative categories and spectra was  
axiomatized in \cite{MayPer2, Pair}.   An infinite loop space 
machine defined on the category $\sP\sC$
of permutative categories is a functor $\bE$ from $\sP\sC$ to any good category 
of spectra (say $\OM$-prespectra for simplicity) together with a natural group 
completion $\io\colon B\sA\rtarr {\bE}_0\sA$, where $B\sA$ is the classifying space 
of $\sA$. Up to natural equivalence, there is a unique such machine $(\bE,\io)$
\cite[Thm 3]{MayPer2}.  We have omitted the
specification of the morphisms of $\sP\sC$.  Strict morphisms were used in 
\cite{MayPer2}.  However, there is a functor from the category of permutative 
categories and lax morphisms to the category of permutative categories and strict 
morphisms that can be used to show that the uniqueness theorem remains valid when 
the morphisms in $\sP\sC$ are taken to be lax; see \cite[4.3]{Pair}.

There are several constructions of such a machine $(\bE,\io)$.  There is an
$E_{\infty}$ operad $\tilde{\SI}$ in $\sC\!at$ whose $j^{th}$ category $\tilde{\SI}_j$ is the 
translation category of the symmetric group $\SI_j$.  It was
defined in \cite[\S4]{MayPer} and, in more detail (and with a minor correction) in \cite[VI\S4]{MQR}.  
As observed in these sources, there are functors 
$\tilde{\SI}_j\times \sA^j\rtarr \sA$ that specify an action of $\tilde{\SI}$ 
on $\sA$.  Passing to classifying spaces, we have an action of the $E_{\infty}$ 
operad $\sD = B\tilde{\SI}$ of spaces on the space $B\sA$.  As recalled in
\cite[9.6]{Prequel}, $\sD$ is the topological version of the Barratt-Eccles
operad \cite{BE2}.  The additive infinite loop space machine of \cite{Geo}, 
as described in \cite[\S9]{Prequel}, gives the required machine $(\bE,\io)$.  

Alternatively, there are at least two ways, one combinatorial and the 
other conceptual, to construct a special $\sF$-category from a permutative category.  
Application of the classifying space functor then gives a
special $\sF$-space, to which Segal's infinite loop space machine 
\cite{Seg2} can be applied.  The combinatorial construction is due to
Segal \cite{Seg2}.  Full details are supplied in \cite[Construction 10]{MayPer2}.  It is essential to the uniqueness theorem there that the construction actually gives a functor from $\sP\sC$ to the category 
$\sF\sP\sC$ of special functors $\sF\rtarr \sP\sC$.  A defect of the 
construction is that it is functorial only on strict rather than lax
morphisms of permutative categories.  The conceptual construction is an 
application of  ``Street's first construction'' from \cite{Street} and is spelled out in \cite[\S\S3,4]{Pair}.  It does not give a functor to 
$\sF\sP\sC$, but it is functorial on lax morphisms.  We say a bit more
about it, or rather its bipermutative analogue, in the next section.   

While there is an essentially unique way to construct spectra from
permutative categories, there is another consistency statement that
is of considerable importance in some of the topological applications.  
In \cite[\S2]{Prequel},  we recalled the notion of a monoid-valued $\sI$-FCP 
(functor with cartesian product) from \cite[I\S1]{MQR} and the more 
modern source \cite[Ch. 23]{MS}.  As explained in those sources, such 
a functor $G$ can be extended from the category $\sI$ of finite dimensional inner product 
spaces to the category $\sI_c$ of countably infinite dimensional inner product spaces by 
passage to colimits.  Then $G(\bR^{\infty})$ is an 
$\sL$-space, where $\sL$ is the linear isometries $E_{\infty}$ operad, and
so is $BG \equiv BG(\bR^{\infty})$.  These can be fed into the additive infinite 
loop space machine of \cite[\S9]{Prequel}.  On the other hand, the 
$G(\bR^n)$ are the morphism spaces of a permutative category with object 
set $\{n|n\geq 0\}$ and no morphisms $m\rtarr n$ for $m\neq n$.  It is
proven in \cite{MayImon} that the spectrum obtained from the $\sL$-space
$BG$ is the connected cover of the spectrum obtained from the permutative
category $\amalg_{n\geq 0} G(\bR^n)$, whose $0^{th}$ space is equivalent
to $BG\times \bZ$. 

\section{What precisely are bipermutative categories?}\label{Biper}

We would like to assume familiarity with the notion of a symmetric bimonoidal category, but the categorical literature on this important topic is strangely meager.  Intuitively, we have a category $\sA$ with two symmetric monoidal
products, $\oplus$ and $\otimes$, with unit objects denoted $0$ and $1$. 
The distributivity laws must hold, at least up to coherent natural transformation.  
As usual, the notion of coherence has to be made precise
in order to have a sensible definition, and a coherence theorem is necessary 
for the notion to be made rigorous.  The only systematic study of coherence
and the only coherence theorems that I know of in this context are those of Laplaza \cite{Laplaza, Laplaza2}.  The essential starting point is to 
formulate distributivity precisely.  Laplaza requires a left\footnote{In algebra,
the left distributivity law states that $a(b+c) = ab+ac$, so that
left multiplication by $a$ is linear.  Curiously, \cite{EM} has left and
right reversed, viewing (\ref{dis}) as right distributivity.} distributivity {\it monomorphism}
\begin{equation}\label{dis}  
\de\colon A\otimes (B\oplus C) \rtarr (A\otimes B)\oplus (A\otimes C). 
\end{equation}
If we define $F_A(-) = A\otimes (-)$ and think of $(\sA,\oplus)$ as a 
symmetric monoidal category, then coherence says in part that $F_A$ is
an op-lax symmetric monoidal functor under $\de$ and the evident unit 
isomorphism.  Therefore, we might say that LaPlaza requires a 
semi op-lax distributivity law.  A fully op-lax distributivity law would 
delete the monomorphism requirement. A lax distributivity law would have 
the arrow point the other way.  In the interesting examples, $\de$ is a natural 
isomorphism, and I prefer to require that in the definition, as I did in 
\cite[p. 153]{MQR}.  Perhaps we should then call these strong symmetric 
bimonoidal categories.   
In any case, the left and right distributivity laws, $\de$ and $\de'$ say,  
must determine each other
by the following commutative diagram, in which $c_{\otimes}$ is the
commutativity isomorphism for $\otimes$. \begin{equation}\label{dis2}
\xymatrix{
A\otimes (B\oplus C) \ar[r]^-{\de} \ar[d]_{c_{\otimes}} ^{\iso}
& (A\otimes B)\oplus (A\otimes C) 
\ar[d]^{c_{\otimes}\oplus c_{\otimes}}_{\iso} \\
(B\oplus C)\otimes A\ar[r]_-{\de{'}} 
& (B\otimes A)\oplus (C\otimes A). \\} 
\end{equation}

As originally specified in \cite[VI\S3]{MQR}, bipermutative categories
give the strictest kind of strong symmetric bimonoidal category that 
one can define without loss of generality.  They are permutative under both 
$\oplus$ and $\otimes$, $0$ is a strict two-sided zero object for the
functor $\otimes$, and the right distributivity law holds strictly,
so that 
\[  (A\oplus B)\otimes C = (A\otimes C)\oplus (B\otimes C). \]  
This equality must be a permutative functor with respect to $\oplus$,
so that $c_{\oplus}\otimes \id = c_{\oplus}$.   
The left distributivity law $\de$ is specified by (\ref{dis2}), with
$\de'=\text{id}$, and cannot be expected to hold strictly.  Only one
additional coherence diagram is required to commute, namely
\[  \xymatrix{
(A\oplus B)\otimes (C\oplus D) \ar@{=}[dd]  \ar[r]^-{\de}
& ((A\oplus B)\otimes C)\oplus ((A\oplus B)\otimes D) \ar@{=}[d] \\
& (A\otimes C) \oplus (B\otimes C)\oplus (A\otimes D)\oplus (B\otimes D)
\ar[d]^{\Id\oplus c_{\oplus} \oplus \Id}\\
(A\otimes(C\oplus D))\oplus (B\otimes(C\oplus D)) \ar[r]_-{\de\oplus \de} 
& (A\otimes C)\oplus(A\otimes D)\oplus (B\otimes C)\oplus (B\otimes D).\\} \]
Since bipermutative categories are a specialization of Laplaza's symmetric 
bi\-mon\-oid\-al categories, his work resolves their coherence problem.  
The assymmetry in the distributive laws is intrinsic, and the strictness of
the right rather than the left distributivity law meshes with our 
use of lexicographic orderings in specifying the notion of an
action of an operad pair.  It is proven in \cite[VI\S3]{MQR} 
that any (small) strong symmetric bimonoidal category is equivalent 
to a bipermutative category, so that there is no loss of generality
in restricting attention to bipermutative categories.

\begin{sch}
Regrettably, the term bipermutative category was redefined in \cite{EM} to mean a weaker and definitely inequivalent notion, which we call 
a lax bipermutative category.  It has two permutative structures, 
but it only
has lax distributivity maps.  That is, it has a map like that of (\ref{dis}) 
and therefore its companion map of (\ref{dis2}), but with the arrows 
pointing in the opposite direction.  It is stated on \cite[p. 178]{EM} that ``Laplaza's symmetric bimonoidal categories are more general even than our 
bipermutative categories, and since they can be rectified to equivalent
bipermutative categories in May's sense, so can ours.'' This statement
is wrong on two counts.   Lax bipermutative categories  
are not special cases of Laplaza's semi op-lax symmetric bimonoidal categories,
and neither the latter nor the former 
can be rectified unless the distributivity maps are isomorphisms.  We note that no precise definition or coherence theorem
has been formulated for lax symmetric bimonoidal categories, and it is unclear that such objects can be rectified to the lax
bipermutative categories of \cite{EM}.
\end{sch}

From the point of view of our applications, these differences do not much matter.  
The interesting examples are strong symmetric bimonoidal 
and can be rectified to bipermutative categories as originally defined. 
The latter give rise to $(\sF\int\sF)$-spaces, as I 
recall in the next section.  In fact, as I will explain, any sensible notion
of lax or op-lax bipermutative category works for that. By the earlier sections 
of this paper, $(\sF\int\sF)$-spaces give rise to $E_{\infty}$ ring spaces.  By the
theory recalled in the prequel \cite{Prequel}, $E_{\infty}$ ring spaces give 
rise to $E_{\infty}$ ring spectra. 

From the point of view of mathematical philosophy and comparisons of
constructions, these differences do matter.  The theory of \cite{EM}
constructs symmetric ring spectra from lax bipermutative categories
and, as it stands, cannot recover 
the applications of \cite{MQR} (and other more recent 
applications), that depend on the use of $E_{\infty}$ ring spaces.  
We need a comparison theorem to the effect that if we start with a 
bipermutative category and process it to an $E_{\infty}$ ring spectrum 
and thus to a commutative $S$-algebra by going through the theory here 
and in \cite{Prequel}, then the 
result is equivalent to what we get by using \cite{EM} to construct a 
symmetric ring spectrum and converting that to a commutative $S$-algebra.
This should be true, but it is not at all obvious. 

Before continuing, we highlight the mistake in \cite{MQR} which led to
the need for the theory that we are describing here.

\begin{sch}  In \cite[VI.2.3, VI.2.6, and VI.4.4]{MQR}, it is claimed 
that $(\sM,\sM)$ and $(\sD,\sD)$ are operad pairs and that $(\sD,\sD)$ acts
on the classifying spaces of bipermutative categories.  These assertions are
incorrect, as is explained in detail in \cite[App A]{Mult}, and for this
reason there seems to be no elementary shortcut showing that the classifying
spaces of bipermutative categories are $E_{\infty}$ ring spaces.   The use of 
$\sD$ alone in the theory of permutative categories is unaffected by the mistake. 
\end{sch} 

\section{The construction of $(\sF\int\sF)$-categories from bipermutative 
categories}\label{Doperad}

There are notions of lax, strong, and strict morphisms between 
symmetric bimonoidal categories, in analogy with the corresponding
notions for symmetric monoidal categories.  Again, the strict notion
is only interesting in the bipermutative case.  We recall a problem 
that was left open in \cite[p. 16]{Mult}.

\begin{conj}\label{yuck} There is a functor on the bipermutative 
category level that replaces lax morphisms by strict morphisms, in a sense analogous
to the corresponding result \cite[4.3]{Pair} for permutative categories.
\end{conj}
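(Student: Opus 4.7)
The plan is to mimic the rectification of \cite[4.3]{Pair} for permutative categories, in which a permutative category $\sA$ is replaced by a flexible resolution $P\sA$ with the universal property that lax morphisms $\sA\to\sB$ correspond to strict morphisms $P\sA\to\sB$. For bipermutative categories, $P\sA$ must accommodate both permutative structures simultaneously with the distributivity data. A concrete candidate takes $P\sA$ to have objects the finite ordered tuples $(a_1,\dots,a_n)$ of objects of $\sA$ (including the empty tuple as $0$), with $\oplus$ given by concatenation and with $\otimes$ given by
\[ (a_1,\dots,a_m)\otimes(b_1,\dots,b_n) = (a_1\otimes b_1,\, a_1\otimes b_2,\dots,\, a_m\otimes b_n) \]
under lexicographic order, with unit $(1)$. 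Concatenation makes $\oplus$ strictly associative and unital; lex ordering makes $\otimes$ strictly associative and unital and makes right distributivity an identity of tuples, while left distributivity is encoded by the natural permutation isomorphism $\nu$ of Notations~\ref{perms1}. Morphisms of $P\sA$ are generated by the morphisms of $\sA$ applied coordinatewise together with the shuffles and lex reindexings required by $c_\oplus$, $c_\otimes$, and $\de$, modulo the bipermutative coherence relations. Then $P\sA$ is bipermutative by Laplaza's coherence theorem combined with the lex identity $\si\langle m,n\rangle\circ\si\langle n,p\rangle = \si\langle m,np\rangle$ built into Notations~\ref{perms1}.

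Given a lax morphism $F\colon\sA\to\sB$ with structure maps $\ph_\oplus$, $\ph_\otimes$ and the accompanying unit and distributivity compatibilities, the strict morphism $P\sA\to\sB$ to which it should correspond sends $(a_1,\dots,a_n)$ to the iterated sum $Fa_1\oplus\cdots\oplus Fa_n$ in $\sB$ and a tensor product of tuples to the corresponding iterated tensor in $\sB$, with the shuffles and lex reindexings of $P\sA$ transported to composites of $c_\oplus$, $c_\otimes$, and $\de$ in $\sB$ glued together by $\ph_\oplus$ and $\ph_\otimes$. The rectification functor then acts on lax morphisms as $PF\colon P\sA\to P\sB$, which is strict on both permutative structures, and the unit $\et\colon\sA\to P\sA$ with $a\mapsto(a)$ is a lax morphism that is a levelwise equivalence and so induces an equivalence of classifying spaces.

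The main obstacle is verifying coherence of the assignment on morphisms. In the permutative case one only needs Mac Lane's coherence for a single symmetric monoidal structure, whereas here one must reconcile the two permutative coherences with Laplaza's coherence for symmetric bimonoidal categories, ensuring that every composite of the lax structure maps $\ph_\oplus$, $\ph_\otimes$ threaded through shuffles and lex reindexings in $P\sA$ produces a well-defined morphism in $\sB$. A more structural approach would exhibit $P$ as the pseudo-morphism classifier of Blackwell--Kelly--Power for the 2-monad on the 2-category of small topologically enriched categories whose algebras are bipermutative categories; such a rectification exists abstractly once the 2-monad is shown to be finitary and suitably accessible in the topologically enriched setting used throughout this paper. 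Either way, the hardest step is to arrange the construction so that it interacts correctly with the passage from bipermutative categories to $(\sF\int\sF)$-categories of Section~\ref{Doperad}, so that the uniqueness arguments of \cite[4.3]{Pair} for permutative categories transfer to an analogous bipermutative uniqueness theorem.
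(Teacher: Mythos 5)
First, a point of record: the paper does not prove this statement at all. It is stated as Conjecture \ref{yuck} and explicitly described as ``a problem that was left open in \cite[p. 16]{Mult}'', so there is no proof in the paper to compare yours against; anything you submit here has to stand as a complete proof on its own, and yours does not. What you have written is a plausible plan, but the two places where you defer verification are exactly the substance of the conjecture. The object-level construction of $P\sA$ (tuples, concatenation for $\oplus$, lexicographic $\otimes$) is the easy part; the morphisms of $P\sA$ are only described as ``generated by \dots modulo the bipermutative coherence relations,'' and you never establish that this yields a well-defined category, that $P\sA$ is bipermutative in the strict sense used here (strict right distributivity, strict zero, $c_\oplus\otimes\id=c_\oplus$), or the universal property that lax morphisms $\sA\rtarr\sB$ correspond bijectively and functorially to strict morphisms out of $P\sA$. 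The difficulty that has kept this open is precisely the one you flag and then skip: a lax morphism of bipermutative categories carries comparison data $\ph_\oplus$, $\ph_\otimes$ together with compatibility with the non-strict left distributivity $\de$, and one must show that every way of threading these through the shuffles and lexicographic reindexings gives the same morphism of $\sB$. Mac Lane plus Laplaza coherence for a single category does not automatically settle coherence for diagrams built from the structure maps of a \emph{lax functor} between two such categories, which is why the permutative-case argument of \cite[4.3]{Pair} does not simply transfer. You also do not check strict functoriality of $F\mapsto PF$ on composites of lax morphisms, which is part of what ``functor \dots that replaces lax morphisms by strict morphisms'' demands.

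The appeal to a pseudo-morphism classifier is the right sort of structural idea, but as stated it is a citation of an abstract existence theorem with its hypotheses unverified: you would need to exhibit bipermutative categories, with the specific lax morphisms relevant here (including the distributivity comparison), as the algebras and algebra lax-morphisms of an accessible $2$-monad on topologically enriched categories, check that the lax-morphism classifier (not just the pseudo one) applies in the enriched setting, and then check that the classifier is again bipermutative in the strict sense and that the construction is compatible with the passage to $(\sF\int\sF)$-categories in \S\ref{Doperad}. None of this is routine, and until it is carried out the statement remains what the paper says it is: a conjecture.
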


In analogy with the permutative category situation, there are two functors,
one combinatorial and one conceptual, that construct $(\sF\int\sF)$-categories
from bipermutative categories.  The combinatorial construction is due to
Woolfson \cite{Woolf} and entails use of a more complicated category that 
contains $\sF\int\sF$.  It is spelled out in detail in \cite[App D]{Mult}.  
It is only functorial on strict morphisms.  In the absence of a proof of
Conjecture \ref{yuck}, this makes it less useful than its permutative category
analogue.

The conceptual construction is given in \cite[\S4]{Mult} and is again
an application of Street's first construction from \cite{Street}.  A
detailed restatement of the properties of the construction is given in
\cite[3.4]{Pair}.  In brief, for any (small) category $\sG$, it gives a 
functor from the category of either lax or op-lax functors $\sG\rtarr \sC\!at$ 
and lax or op-lax natural transformations to the category of genuine functors 
and genuine natural transformations $\sG\rtarr \sC\!at$, together with a comparison of the input and output up to ``natural homotopy''.  \footnote{It is generally understood in 
bicategory theory that lax functors $F$ should have comparison natural transformations 
$F(\ps)\com F(\ph) \rtarr F(\ps\com\ph)$; op-lax functors should have the arrows
reversed.  Street \cite{Street} uses lax functors and calls them that; in view of 
the freedom to replace $\sG$ by $\sG^{op}$, his construction applies equally well to 
op-lax functors.  Unfortunately, in \cite{Pair, Mult}, I used op-lax functors but 
called them lax functors.  I'll call them op-lax functors here.  The natural homotopies of
\cite{Pair, Mult} are special cases of what are called ``modifications'' in the bicategorical literature.}
  
To apply this general categorical construction to our situation, we need only construct 
an op-lax (or lax) functor $A\colon \sF\int\sF\rtarr \sC\!at$ from a bipermutative category $\sA$.  That is very easy to do. We recall the details from \cite[\S3]{Mult} to emphasize the role of the distributivity law and explain why a lax or op-lax law would work just as well as a strict or strong law.\footnote{I learned this from Michael Shulman.}  We start by specifying 
$A(\mathbf{n};S) = \sA^{s_1}\times \cdots\times \sA^{s_n}$ on objects, where 
$A(0;*)$ is the trival category $*$ and 
$\sA^0$ is the trivial category $0$. For a morphism $(\ph;\chi)\colon (\mathbf{m};R)\rtarr (\mathbf{n};S)$ in $\sF\int\sF$, we specify the functor
\[ A(\ph;\chi)\colon A(\mathbf{m};R)\rtarr A(\mathbf{n};S) \]
by the formula
\[A(\ph;\chi) (\times_{i=1}^m \times_{u=1}^{r_i} a_{i,u})
= \times_{j=1}^n \times_{v=1}^{s_j} \bigoplus_{\chi_j(U) = v} \bigotimes_{\ph(i)=j}a_{i,u_i} \]
on both objects and morphisms.  Here $U$ runs over the lexicographically ordered set (l.o.s)  of 
sequences with $i^{th}$ term $u_i$ satisfying $1\leq u_i\leq r_i$ for 
$i\in \ph^{-1}(j)$;
this set can be identified with $\sma_{\ph(i)=j}\mathbf{r_i}-\{0\}$. 
That is the same formula that we would have used if we had given a complete proof of
Proposition \ref{FFX}, describing rig spaces as $\sF\int\sF$-spaces explicitly.  In that context, we would have strict commutativity and distributivity and the formula would give a functor 
$\sF\int\sF\rtarr \sU$.  In the present context, we have coherence isomorphisms that give lax functoriality. Note
first that $A$ takes identity morphisms to identity functors.  However, for a second
morphism $(\ps;\om)\colon (\mathbf{n};S)\rtarr (\mathbf{p};T)$ in $\sF\int\sF$, we have
\[ A(\ps\com\ph;\xi)(\times_{i=1}^m\times_{u=1}^{r_i} a_{i,u})
= \times_{k=1}^p\times_{w=1}^{t_k} \bigoplus_{\xi_k(Y)=w}
\bigotimes_{(\ps\ph)(i)=k} a_{i,y_i},\]
where $\xi_k = \om_{k}\com (\sma_{\psi(j)=k}\ch_j)\com \si_k(\ps,\ph)$ and $Y$ runs
through the l.o.s of sequences with $1\leq y_i\leq r_i$ for $i\in (\ps\ph)^{-1}(k)$,
regarded as elements of $\sma_{\ps\ph(i) = k} \mathbf{r_i}$,
whereas
\[ A(\ps;\om)A(\ph;\ch)(\times_{i=1}^m\times_{u=1}^{r_i} a_{i,u})
= \times_{k=1}^p\times_{w=1}^{t_k}\bigoplus_{\om_k(V)=w} \bigotimes_{\ps(j) = k}
(\bigoplus_{\ch_j(U) = v_j} \bigotimes_{\ph(i) = j} a_{i,u_i}), \]
where $U$ run through the l.o.s. of sequences with $1\leq u_i\leq r_i$
for $i\in \ph^{-1}(j)$ and $V$ runs through the l.o.s. of sequences with 
$1\leq v_j\leq s_j$ for $j\in \ps^{-1}(k)$,
regarded as elements of
$\sma_{\ph(i)=j}\mathbf{r_i}$ and $\sma_{\psi(j) = k}\mathbf{s_j}$ respectively.  The 
commutativity isomorphisms $c_{\oplus}$ and $c_{\otimes}$, together with the 
strict right distributivity law, induce a natural isomorphism
\[\si((\ps;\om),(\ph;\chi))\colon A(\ps\com\ph;\xi) \rtarr A(\ps;\om)A(\ph;\ch).\]
The coherence in the definition of a bicategory gives the coherence with respect
to the associativity and unity of composition that are implicit in the assertion 
that this definition does give an op-lax functor.   

Clearly, we can reverse the arrow, and 
then we have a lax rather than op-lax functor.  With the proper specification of
coherence data, dictated by the requirement that the definition give a lax or
op-lax functor, we see that there is no need for a strict or even a strong 
distributivity law.  We conclude that, with proper definitions, we can obtain
a lax or op-lax functor from a lax or op-lax bipermutative category.  Street's
construction applies to rectify either to a (special) functor 
$\sF\int\sF\rtarr \sC\!at$.  Thus we first construct a lax or op-lax functor that 
uses actual cartesian products on objects, and we then 
use Street's construction to convert it to a genuine functor, but one that no longer 
uses actual cartesian products of objects.  Street's construction is ideally suited to convert the kind of structured categories that we encounter in nature to the kind of structured categories that we know how to convert to $E_{\infty}$ ring spaces after passage to their classifying spaces.

\section{Appendix A. Generalities on monads}\label{MonadAp}

To make this paper reasonably self-contained, we repeat some results
from \cite[p. 219]{MT} and \cite[\S5]{Mult}; the elementary categorical
proofs may be found there. 

Let $L\colon \sW\rtarr \sV$ and $R\colon \sV\rtarr \sW$
be an adjoint pair of functors with counit $LR = \text{Id}$ and unit
$\de\colon \text{Id}\rtarr RL$. 
We have a pair of propositions and corollaries relating monad 
structures on functors 
$C\colon \sV\rtarr \sV$ and $D\colon \sW\rtarr \sW$.  They differ
due to the assymmetry of our assumptions on $L$ and $R$.
The next result is \cite[5.1]{Mult}.

\begin{prop}  Let $(C,\mu,\et)$ be a monad on $\sV$,
let $(F,\rh)$ be a (right) $C$-functor in some category $\sV'$, and let
$(X,\xi)$ be a $C$-algebra in $\sV$. Define $D = RCL$.
\begin{enumerate}[(i)]
\item $D$ is a monad on $\sW$ with unit and product the composites
\[ \xymatrix@1{
\text{Id} \ar[r]^-{\de} & RL \ar[r]^-{R\et L} & RCL = D \\} \]
\[ \xymatrix@1{
DD= RCLRCL = RCCL \ar[r]^-{R\mu L} & RCL. \\} \]
\item $FL$ is a $D$-functor in $\sV'$ with right action
\[ \rh L\colon FLD = FLRCL = FCL \rtarr FL.  \]
\item $RX$ is a $D$-algebra in $\sW$ with action
\[ R\xi\colon  DRX = RCLRX = RCX \rtarr RX.  \]
\end{enumerate}
\end{prop}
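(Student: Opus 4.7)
The plan is to verify the three claims by short diagram chases that exploit the fact that the counit $LR = \mathrm{Id}$ is a strict equality, not merely an isomorphism. This is the crucial simplification: the formal composite $DD = RCLRCL$ collapses on the nose to $RCCL$ with no intervening coherence data, and likewise $DRX = RCLRX = RCX$ and $FLD = FLRCL = FCL$. Consequently the monad structure of $C$ can be transported across by conjugation through $R(-)L$, and $F$-functors and $C$-algebras transport in the same spirit. The triangle identities reduce, under $LR=\mathrm{Id}$, to the strict equalities $L\de = \mathrm{id}_L$ and $\de R = \mathrm{id}_R$, which is what makes the unit axioms close up.

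For part (i) I would first write down $\et_D = R\et L \circ \de$ and $\mu_D = R\mu L$ as stated, and then check associativity and the two unit laws. Associativity expands to
\[
DDD = RCLRCLRCL = RCCCL \xrightarrow{\ R\mu L\ } RCCL = DD \xrightarrow{\ R\mu L\ } RCL = D,
\]
and commutativity of the two paths $DDD \to D$ is the associativity square for $\mu$ with $R(-)L$ applied. The left unit law $\mu_D \circ \et_D D = \mathrm{id}_D$ unwinds to
\[
RCL \xrightarrow{\de RCL} RLRCL \xrightarrow{R\et LRCL} RCLRCL = RCCL \xrightarrow{R\mu L} RCL,
\]
in which the first arrow is $\mathrm{id}_{RCL}$ by $\de R = \mathrm{id}_R$, reducing the composite to $R\mu L \circ R\et CL = R(\mu\circ \et C) L = \mathrm{id}_{RCL}$ by the left unit law for $\mu$. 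The right unit law is symmetric, using $L\de = \mathrm{id}_L$.

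For part (ii) the $D$-functor axioms for $(FL,\rh L)$ follow by applying $(-)L$ to those of $(F,\rh)$. The associativity pentagon becomes
\[
(FL)DD = FCCL \xrightarrow{F\mu L} FCL \xrightarrow{\rh L} FL,
\]
which is the $C$-functor associativity for $F$ postcomposed with $L$; the unit axiom again uses $\de R = \mathrm{id}_R$ to kill the $\de$-factor of $\et_D$, and then the $C$-functor unit axiom of $F$ closes the diagram. Part (iii) is just the specialization of (ii) to the case where $\sV' = \sV$ and $(F,\rh)$ is the $C$-functor given by postcomposition with the $C$-algebra $(X,\xi)$, i.e.\ the constant functor at $X$ with its $C$-algebra action.

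The main obstacle, such as it is, is the bookkeeping in the unit axioms: one must carefully distinguish the adjunction unit $\de$ from the monad unit $\et$ and invoke exactly the correct triangle identity at the correct place. Everything else is naturality applied to the pre-existing axioms for $(C,\mu,\et)$ and $(F,\rh)$, transported through $R(-)L$; there is no genuine coherence issue because $LR$ equals the identity functor on the nose.
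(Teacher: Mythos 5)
Your treatment of (i) and (ii) is correct and is essentially the elementary transport argument that the paper defers to its references (\cite[5.1]{Mult}, \cite{MT}): since the counit is the identity $LR=\Id$, the formal composites collapse ($DD=RCCL$, $FLD=FCL$, $DRX=RCX$), and the two triangle identities become $L\de=\id_L$ and $\de R = \id_R$, so associativity and the unit laws are just the axioms for $(C,\mu,\et)$ and $(F,\rh)$ whiskered by $R(-)L$ or $(-)L$. One small slip in (ii): in the unit axiom for $FL$ the adjunction unit appears as $FL\de$, so it is the identity $L\de=\id_L$ that kills it, not $\de R=\id_R$ (which is what you correctly used in the left unit law of (i)). That is trivially repaired.

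The genuine misstep is your derivation of (iii) from (ii). A $C$-algebra is a \emph{left} action $\xi\colon CX\rtarr X$, whereas a (right) $C$-functor is a \emph{right} action $\rh\colon FC\rtarr F$ for a functor $F\colon \sV\rtarr\sV'$. For the constant functor $K_X$ at $X$ one has $K_XC=K_X$, so a right action on $K_X$ is merely a map $X\rtarr X$, which the unit axiom forces to be the identity; the algebra structure $\xi$ is not encoded at all, and there is no ``$C$-functor given by postcomposition with $(X,\xi)$'' in this sense. Moreover, even with some functorial encoding, (ii) would only produce a $D$-\emph{functor}, while (iii) asserts that the object $RX$ of $\sW$ is a $D$-\emph{algebra}; these are statements of different types, so (iii) is not a specialization of (ii). Fortunately (iii) follows by exactly the computation you already carried out in (i): $DRX=RCLRX=RCX$, $D(R\xi)=RC\xi$, $(\mu_D)_{RX}=R\mu_X$, and $(\et_D)_{RX}=R\et_X$ because $\de_{RX}=\id$ by $\de R=\id_R$; applying $R$ to the associativity and unit diagrams of the $C$-algebra $(X,\xi)$ then yields the $D$-algebra axioms for $(RX,R\xi)$. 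With that direct check substituted for your reduction, the proof is complete.
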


In the present generality, we state results about bar constructions
simplicially.  After geometric realization in our space level situations, 
they give corresponding
results about the actual bar constructions of interest.

\begin{cor} The simplicial two-sided bar construction satisfies
\[ B_*(F,C,X) = B_*(FL,D,RX) \]
for a $C$-algebra $X$ and $C$-functor $F$, where $D = RCL$.
\end{cor}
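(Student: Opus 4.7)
The plan is to prove the identity at the level of simplicial objects by checking that the $q$-simplices and all faces and degeneracies match on the two sides. The crucial input is the hypothesis on the adjunction, namely that the counit $LR \rtarr \Id$ is the identity natural transformation. This is what allows the $D$-bar construction, which on its face uses many interpolated copies of $L$ and $R$, to collapse to the $C$-bar construction.

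First I would compute the $q$-simplices. By definition, $B_q(FL,D,RX) = FL\, D^q\, RX = FL\, (RCL)^q\, RX$. Composing and telescoping the $(q+1)$ copies of the composite $LR$ inside this expression, each of which is the identity by hypothesis, reduces the functor to $F\, C^q\, X = B_q(F,C,X)$. So the underlying $q$-simplex functors are literally equal, not merely isomorphic.

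Next I would check that the face and degeneracy operators correspond. The outer face $d_0$ on $B_*(FL,D,RX)$ is induced by the $D$-action $\rh L\colon FLD = FCL \rtarr FL$ defined in part (ii) of the preceding proposition; under the identification $L\cdots R = \text{Id}$ (collapsing the copies of $LR$ that flank it) this becomes exactly $\rh$, which is the outer face on $B_*(F,C,X)$. The outer face $d_q$ is induced by the $D$-algebra action $R\xi\colon DRX = RCX \rtarr RX$ of part (iii); collapsing the flanking $LR$'s again, this becomes $\xi$. The inner faces $d_i$ for $0<i<q$ come from the monad product of $D$, which is $R\mu L$, and by the same telescoping this becomes $\mu$. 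The degeneracies, determined by the unit $R\et L\com \de$ of $D$, collapse to $\et$ since on objects already of the form $C^iX$ the unit $\de$ at that position becomes an identity after composition with the surrounding $L$ or $R$.

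The main subtlety, and really the only step that requires a moment's care, is keeping track of which pair of $L$ and $R$ gets cancelled at each position when the structure maps are applied, since the $q$-simplices on the right of $B_*(FL,D,RX)$ carry an asymmetric pattern of $L$'s and $R$'s that must be realigned before the cancellation $LR=\Id$ applies. Once this bookkeeping is laid out in a single diagram for a general $q$, the identification of simplicial objects is immediate and the corollary follows.
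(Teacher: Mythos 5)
Your argument is correct and is essentially the proof the paper has in mind (the paper itself omits it, citing \cite{MT} and \cite[\S5]{Mult}): one identifies $B_q(FL,D,RX)=FL(RCL)^qRX$ with $FC^qX$ by cancelling the $q+1$ copies of $LR=\Id$, and checks that the faces and degeneracies built from $\rh L$, $R\mu L$, $R\xi$, and $R\et L\com\de$ collapse to $\rh$, $\mu$, $\xi$, and $\et$. Your key bookkeeping step for the degeneracies --- that $\de$ becomes an identity after whiskering with the adjacent $L$ or $R$ --- is exactly the triangle identities $L\de=\id_L$ and $\de R=\id_R$, which hold because the counit is the identity, so the proof is complete as written.
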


The following result combines the two results \cite[5.2 and 5.3]{Mult}.

\begin{prop}\label{omniold} Let $(D,\nu, \ze)$ be a 
monad on $\sW$.  Define $C = LDR$ and let $\bar{\de}\colon D\rtarr RCL$ 
denote the common composite in the diagram
\[  \xymatrix{
D\ar[r]^-{D\de} \ar[d]_{\de D} \ar@{-->}[dr]^{\bar{\de}} 
& DRL \ar[d]^{\de DRL}\\
RLD\ar[r]_-{RLD\de} & RLDRL.\\}  \]
Assume that one of the following two natural maps is an isomorphism:
\[ \de DR =\bar{\de}R\colon DR\rtarr RC \ \ \ \text{or}\ \ \ 
LD\de = L\bar{\de}\colon LD\rtarr CL. \]
\begin{enumerate}[(i)]
\item $C$ is a monad on $\sV$ with unit and product the composites
\[ \xymatrix@1{
\text{Id} = LR\ar[r]^-{L\ze R} & LDR \\} \]
and
\[ \xymatrix@1{
CC = LDRLDR \ar[rr]^-{(LD\de DR)^{-1}}  && LDDR \ar[r]^-{L\nu R} & LDR = C,\\} \]
and $\bar{\de}\colon D\rtarr RCL$ is a map of monads on $\sW$.
\item If $(F,\rh)$ is a $C$-functor, then $(FL,\rh L\com FL\bar{\de})$
is a $D$-functor. In particular, $RCL$ is a $D$-functor and 
$\bar{\de}\colon D\rtarr RCL$ is a map of $D$-functors.
\item If $(X,\xi)$ is a $C$-algebra, then $(RX,R\xi\com \bar{\de}R)$ is a
$D$-algebra. In particular, for $Y\in\sW$, $RCLY$ is a $D$-algebra and
$\bar{\de}\colon DY\rtarr RCL Y$ is a map of $D$-algebras. 
\item If $(RX,\ps)$ is a $D$-algebra, then $(X, L\ps)$ is a $C$-algebra,
and $R$ embeds $C[\sV]$ into $D[\sW]$ as the full subcategory of $D$-algebras of the form $RX$. 
\item When $LD\de\colon LD\rtarr CL$ is an isomorphism, if $(Y,\ps)$ is a 
$D$-algebra in $\sW$, then $(LY,L\ps\com (LD\de)^{-1})$ is a $C$-algebra 
in $\sV$ and $\de\colon Y\rtarr RLY$ is a map of $D$-algebras.
\end{enumerate}
\end{prop}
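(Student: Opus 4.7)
The plan is straightforward diagram chasing, exploiting the strict equation $LR = \Id$. Its two immediate consequences will be used repeatedly: the triangle identities reduce to $L\de = \id_L$ and $\de R = \id_R$. First, I would verify that $\bar{\de}$ is well-defined by checking that its defining square commutes, which is just naturality of $\de\colon \Id \to RL$ applied to the identity on $D$.

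For part (i), the unit $\eta_C = L\ze R$ is natural because $\ze$ is. The multiplication $\mu_C$ requires inverting $LD\de DR\colon LDDR \to LDRLDR$; this is possible under either hypothesized isomorphism, since $LD\de DR$ is the whiskering of $LD\de$ by $DR$ on the right and simultaneously the whiskering of $\de DR$ by $LD$ on the left, and whiskering preserves isomorphisms. The unit axioms for $\mu_C$ then follow from the unit axioms for $\nu$ together with $L\de = \id_L$ and $\de R = \id_R$, while associativity of $\mu_C$ reduces to associativity of $\nu$ combined with a naturality square for $\de$. That $\bar{\de}\colon D \to RCL$ is a map of monads then amounts to verifying two squares: the unit square by the triangle identities, and the multiplication square by expanding both sides on $DD$ and applying associativity of $\nu$.

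Parts (ii) and (iii) are then formal consequences: any $C$-functor or $C$-algebra pulls back along the monad map $\bar{\de}$ to a $D$-functor or $D$-algebra, and the explicit formulas given in the statement are just this pullback unpacked. Part (iv) is the standard fully faithfulness of $R$ on algebras. Indeed, since $\de R = \id_R$, one computes $\bar{\de}R = \de DR$ and hence $L\bar{\de}R = L\de DR = \id_C$; combined with $LR = \Id$, this shows that every morphism $f\colon RX \to RX'$ in $\sW$ equals $R(Lf)$, so morphisms $RX \to RX'$ correspond bijectively to morphisms $X \to X'$ in $\sV$, and $f$ is a $D$-algebra map if and only if $Lf$ is a $C$-algebra map. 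Part (v) uses the sharper hypothesis that $LD\de$ is an isomorphism, which is exactly what is needed to form the composite $L\ps \circ (LD\de)^{-1}$; the $C$-algebra axioms for this action follow from the $D$-algebra axioms for $(Y,\ps)$ by diagram chase, and $\de\colon Y \to RLY$ is a $D$-algebra map by direct comparison of the $D$-action on $Y$ with the $D$-action on $RLY$ induced from the $C$-action on $LY$ via (iii).

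The main technical obstacle is the diagram chase for associativity of $\mu_C$, where one must simultaneously navigate the definition of $\mu_C$ through $(LD\de DR)^{-1}$, the associativity of $\nu$, and naturality of $\de$ at several components. This is the one place where purely mechanical manipulation is non-trivial; everything else reduces cleanly to formal statements about monads and adjunctions, which is why the proposition is best stated and proved in a categorical appendix.
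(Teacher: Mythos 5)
Your proposal is correct and takes essentially the same approach as the paper, which in fact gives no argument at all for this proposition but defers to \cite[p.\,219]{MT} and \cite[5.2, 5.3]{Mult}, whose proofs are exactly the elementary diagram chases you outline: everything rests on $LR=\Id$, the degenerate triangle identities $L\de=\id_L$ and $\de R=\id_R$, naturality of $\de$ and $\nu$, the monad axioms for $\nu$, and the observation that $LD\de DR$ (and its double-insertion analogue used for associativity) is invertible under either hypothesis because it is a whiskering of $LD\de$ or of $\de DR$. The only imprecision is in (iv): the identity $f=R(Lf)$ for $f\colon RX\rtarr RX'$ follows from naturality of $\de$ at $f$ together with $\de R=\id_R$, not from $L\bar{\de}R=\id_C$, and you should also record the (equally short) chases showing that $(X,L\ps)$ satisfies the $C$-algebra axioms and that $R$ applied to it recovers $(RX,\ps)$.
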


\begin{cor}\label{coromni1} Let $D$ be a monad on $\sW$ and let $C = LDR$. 
\begin{enumerate}[(i)]
\item If $\bar{\de}R\colon DR\rtarr RC$ is an isomorphism, then
\[ B_*(GR,C,X)\iso B_*(G,D,RX) \]
for a $C$-algebra $X$ and $D$-functor $G$ and therefore
\[ B_*(F,C,X)\iso B_*(FL,D,RX) \]
for a $C$-functor $F$. 
\item If $LD\de\colon LD\rtarr CL$ is an isomorphism, then
\[ B_*(F,C,LY)\iso B_*(FL,D,Y) \]
for a $D$-algebra $Y$ and a $C$-functor $F$.
\end{enumerate}
\end{cor}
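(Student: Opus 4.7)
The plan is to establish a levelwise simplicial isomorphism between the two bar constructions by iterating the given natural isomorphism.

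For part (i), assuming $\bar{\de}R\colon DR\rtarr RC$ is an isomorphism, I would first construct, by induction on $q$, a natural isomorphism $\al_q\colon D^q R\rtarr RC^q$ as the composite
\[ D^q R \xrightarrow{D^{q-1}\bar{\de}R} D^{q-1}RC \xrightarrow{D^{q-2}\bar{\de}RC} D^{q-2}RC^2 \to \cdots \to RC^q. \]
Under the hypothesis, Proposition \ref{omniold}(iii) makes $RX$ a $D$-algebra, and $GR$ inherits a $C$-functor structure via the composite $GRC \xrightarrow{G(\bar{\de}R)^{-1}} GDR \xrightarrow{\rh R} GR$, so both bar constructions are well-defined. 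The candidate isomorphism on $q$-simplices is $G\al_q X\colon GD^q RX \rtarr GRC^q X$.

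Then I would verify that this respects the simplicial structure. The face $d_0$ on the $D$-side is $\rh D^{q-1}R$ and on the $C$-side is $(\rh R\com G(\bar{\de}R)^{-1})C^{q-1}$; these correspond by the very definition of the $C$-functor structure on $GR$. The interior faces $d_i$ for $0<i<q$ use only $\nu\colon DD\rtarr D$ on one side and $\mu\colon CC\rtarr C$ on the other; since $\bar{\de}\colon D\rtarr RCL$ is a map of monads by Proposition \ref{omniold}(i), $\bar{\de}R$ intertwines $\nu R$ with $R\mu\circ \bar{\de}RC\circ D\bar{\de}R$, and iterating this shows the interior faces correspond under $\al_q$. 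The face $d_q$ uses the algebra action: on the $D$-side it is $GD^{q-1}(R\xi\com \bar{\de}R)$, precisely the composite that defines the $D$-algebra structure on $RX$, while on the $C$-side it is $GRC^{q-1}\xi$, and compatibility is again immediate from Proposition \ref{omniold}(iii). The degeneracies use the units, and compatibility is the unit half of the map-of-monads assertion. The \textsl{therefore} clause follows by applying the first part with $G=FL$ (a $D$-functor by Proposition \ref{omniold}(ii)) and observing that $LR=\Id$ forces $GR=FLR=F$ with matching $C$-functor structure.

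Part (ii) is entirely dual. I would iterate $LD\de\colon LD\rtarr CL$ to obtain isomorphisms $\be_q\colon LD^q \rtarr C^q L$, invoke Proposition \ref{omniold}(v) to equip $LY$ with a $C$-algebra structure, and use the $D$-functor structure on $FL$ from Proposition \ref{omniold}(ii) to make sense of the right-hand bar construction. The verification of the simplicial identities proceeds exactly as before, with the map-of-monads property of $\bar{\de}$ (together with the fact that $LD\de$ is the $L$-image of $\bar{\de}$ after $LR=\Id$) providing the intertwining needed for interior faces.

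The genuinely substantive point throughout is that the natural transformation $\bar{\de}$ is a map of monads, so its iterates intertwine multiplication and unit with multiplication and unit; once this is recognized, everything else is bookkeeping with simplicial identities. The main obstacle, such as it is, lies in this bookkeeping: one must track the order of applications of $\bar{\de}R$ (respectively $LD\de$) at the intermediate positions when comparing $d_i$ for $0<i<q$. Since nothing deeper than naturality and the monad axioms is required, I would relegate this to an inductive check on $q$ rather than writing it out in full.
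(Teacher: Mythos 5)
Your proposal is correct and is essentially the argument the paper has in mind (the paper itself defers the proof to \cite{MT} and \cite{Mult}, where the same elementary levelwise identification appears): iterate $\bar{\de}R$ (resp.\ $LD\de$) to identify $q$-simplices and use that $\bar{\de}$ is a map of monads, together with the definitions of the induced functor and algebra structures from Proposition \ref{omniold}, to match faces and degeneracies. The only point worth making explicit in part (ii) is the identity $L\bar{\de}=LD\de$ (which follows from $L\de=\id_L$ since the counit is the identity), and you have noted it.
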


Recall from \cite[9.8]{Geo} that we always have a map 
\[ \epz_*\colon B_*(C,C,X)\rtarr X_* \]
of simplicial $C$-algebras that is a simplicial homotopy equivalence,
where $X_*$ is the constant simplicial object at $X$.  

\begin{cor}\label{coromni2} Under the hypotheses of Proposition \ref{omniold}, 
\[  \bar{\de}_*= B_*(\bar{\de},\id,\id)\colon B_*(D,D,Y)
\rtarr B_*(RCL,D,Y) = RB_*(CL,D,Y) \]
is a map of simplicial $D$-algbras.  If $\bar{\de}R\colon DR\rtarr RC$
is an isomorphism and $Y = RX$ for a $C$-algebra $X$, then the diagram
\[ \xymatrix@1{
Y_* & B_*(D,D,Y)\ar[r]^-{\bar{\de}_*} \ar[l]_-{\epz_*} & RB_*(CL,D,Y)\\} \]
of simplicial $D$-algebras is obtained by applying $R$ to the evident diagram
\[ \xymatrix@1{
X_* & B_*(C,C,X)\ar[r]^-{\iso} \ar[l]_-{\epz_*} & B_*(CL,D,RX)\\} \]
of simplicial $C$-algebras.
\end{cor}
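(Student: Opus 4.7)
The plan is to dispatch the two assertions separately, both being formal consequences of Proposition~\ref{omniold} and Corollary~\ref{coromni1}. For the first claim, at simplicial level $q$ the map $\bar{\de}_*$ specializes to $\bar{\de}D^q\colon D\cdot D^q Y\rtarr RCL\cdot D^q Y$. Compatibility with the face and degeneracy operators is automatic from naturality of $\bar{\de}$ together with the fact that $\bar{\de}\colon D\rtarr RCL$ is a map of monads by Proposition~\ref{omniold}(i). Since the simplicial $D$-algebra structure on $B_*(F,D,Y)$ for a $D$-functor $F$ is induced by the right $D$-action on $F$, and $\bar{\de}$ is a map of $D$-functors by Proposition~\ref{omniold}(ii), it follows that $\bar{\de}_*$ is a map of simplicial $D$-algebras.

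For the second claim, I would match each corner and each arrow of the top diagram with the result of applying $R$ to the corresponding item below. The identification of the left-hand corners is immediate from $Y=RX$. For the right-hand corner, Corollary~\ref{coromni1}(i) applied to the $D$-functor $G=CL$ yields $B_*(CL,D,RX)\iso B_*(CLR,C,X)=B_*(C,C,X)$, where the equality uses $LR=\Id$; applying $R$ supplies the desired identification $RB_*(CL,D,Y)\iso RB_*(C,C,X)$. For the middle corner, Corollary~\ref{coromni1}(i) applied to $G=D$ yields $B_*(D,D,RX)\iso B_*(DR,C,X)$, and the standing hypothesis that $\bar{\de}R\colon DR\rtarr RC$ is an isomorphism of $C$-functors converts the right-hand side to $B_*(RC,C,X)=RB_*(C,C,X)$, the last equality being level-wise: $RC\cdot C^qX=R(C^{q+1}X)$.

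With these identifications in place, I would then check compatibility of the two arrows. The augmentations $\epz_*$ match under the identification $B_*(D,D,RX)\iso RB_*(C,C,X)$ because the $D$-algebra structure on $RX$ is obtained by transporting the $C$-algebra structure on $X$ along $\bar{\de}R$ (Proposition~\ref{omniold}(iii)), so $\epz_*$ for $B_*(D,D,RX)$ is precisely $R$ applied to $\epz_*$ for $B_*(C,C,X)$. The arrow $\bar{\de}_*$ becomes $R$ of the isomorphism $B_*(C,C,X)\iso B_*(CL,D,RX)$ from Corollary~\ref{coromni1}(i) because, under the identifications just made, $\bar{\de}_q$ at each simplicial degree is exactly the iterated use of $\bar{\de}R$ that underlies that isomorphism. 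The main obstacle is this last coherence: one must track how the iterated isomorphisms $D^{q+1}R\iso RC^{q+1}$ interact with the face and degeneracy operators of both bar constructions, which reduces to a diagram chase using naturality of $\bar{\de}$ and the monad structure maps, with no conceptual novelty beyond what is already in Proposition~\ref{omniold}.
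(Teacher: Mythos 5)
Your overall route is the intended one: the corollary is a purely formal consequence of Proposition \ref{omniold} and Corollary \ref{coromni1} (the paper itself defers the verification to \cite{MT} and \cite{Mult}), and your treatment of the second statement --- identifying the three corners via Corollary \ref{coromni1}(i) with $G=D$ and $G=CL$, using $LR=\Id$ and the isomorphism $\bar{\de}R\colon DR\rtarr RC$, matching $\epz_*$ via the description of the $D$-action on $RX$ in Proposition \ref{omniold}(iii), and then checking levelwise that $\bar{\de}_*$ corresponds to $R$ applied to the isomorphism $B_*(C,C,X)\iso B_*(CL,D,RX)$ --- is exactly the expected bookkeeping.

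However, your justification of the first claim fails as written. You assert that the simplicial $D$-algebra structure on $B_*(F,D,Y)$ ``is induced by the right $D$-action on $F$'' and conclude from Proposition \ref{omniold}(ii) alone that $\bar{\de}_*$ is a map of simplicial $D$-algebras. The right $D$-action on $F$ is what produces the face map $d_0$, hence the simplicial structure; it does not give the levels any algebra structure, and indeed for a general $D$-functor $F$ with values in some category $\sV'$ the objects $FD^qY$ are not $D$-algebras at all. The structures actually in play are the free $D$-algebra structure on $B_q(D,D,Y)=DD^qY$ and, on $B_q(RCL,D,Y)=RCLD^qY$, the structure of Proposition \ref{omniold}(iii) (equivalently, the free $RCL$-algebra structure pulled back along the monad map $\bar{\de}$). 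That $\bar{\de}$ compares these levelwise is precisely the final clause of Proposition \ref{omniold}(iii): $\bar{\de}\colon DW\rtarr RCLW$ is a map of $D$-algebras for every $W\in\sW$, applied with $W=D^qY$; alternatively one can deduce it from the map-of-monads property in (i), but not from (ii). What (ii) does buy you is that $\bar{\de}_*$ commutes with the faces and degeneracies (compatibility with the right $D$-actions handles $d_0$, naturality the rest). With that substitution the first claim is complete, and the remainder of your argument, including the deferred coherence check that the iterated isomorphisms $D^{q+1}R\iso RC^{q+1}$ are simplicial, is the routine chase you describe.
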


\section{Appendix B. Monads and distributivity}\label{BeckAp}

Consider two monads, $(C,\mu_{\oplus},\et_{\oplus})$ and 
$(G,\mu_{\otimes},\et_{\otimes})$, on the same category $\sV$.  
As the notation indicates, we think of $C$ as ``additive'' and $G$ as ``multiplicative''.  We want to understand a monadic distributivity 
law for an action of 
$G$ on $C$.  This was obtained in an elegant paper of Beck \cite{Beck},
as I only learned after reproducing many of its results in the course
of working out multiplicative infinite loop space theory \cite[\S5]{Mult}.   Since this
theory is central to understanding, we repeat it here in abbreviated 
form, referring the reader to Beck \cite{Beck} for detailed verifications.

Let $C[\sV]$ and $G[\sV]$ denote the categories of $C$-algebras and
$G$-algebras in $\sV$. 

\begin{defn}\label{CGact} An action of $G$ on $C$ is a structure of monad 
on $G[\sV]$ induced by the monad $C$ on $\sV$. In detail, for an action 
 of $G$ on $X$, there is a prescribed functorial induced action of $G$ 
 on $CX$ (and thus on $CCX$ by iteration) such that
$\et_{\oplus}\colon X\rtarr CX$ and $\mu_{\oplus}\colon CCX\rtarr CX$
are maps of $G$-algebras. 
\end{defn}

Recall that the following diagram commutes 
for composable pairs of functors
$(B,A)$ and $(D,C)$ and for natural transformations $\al\colon A\rtarr C$
and $\be\colon B\rtarr D$.
\[ \xymatrix{
& BC \ar[dr]^{\be C} & \\
BA \ar[ur]^{B\al} \ar[dr]_{\be A}  & & DC \\
& DA \ar[ur]_{D\al} &\\} \]
In the categorical literature the common composite is generally
written $\al\be$ or $\be\al$. It is just the horizontal composition
of the $2$-category $\sC\!at$, but we shall be explicit.

\begin{thm}\label{Beckthm} The following data relating the monads $C$ and $G$ 
are equivalent.
\begin{enumerate}[(i)]
\item An action of $G$ on $C$.
\item A natural transformation $\mu\colon CGCG\rtarr CG$ with 
the following properties.
\begin{enumerate}[(a)]
\item $(CG,\mu,\et)$ is a monad on $\sV$, where 
$\et = \et_{\oplus}G \com \et_{\otimes}\colon \text{Id}\rtarr CG$.
\item $C\et_{\otimes}\colon C\rtarr CG$ and $\et_{\oplus}G\colon G\rtarr CG$
are maps of monads.
\item  The following composite is the identity natural transformation.
\[ \xymatrix@1{
CG \ar[r]^-{C\et_{\oplus} G} & CCG\ar[r]^-{C\et_{\otimes}CG } & CGCG
\ar[r]^-{\mu} & CG \\} \]
\end{enumerate}
\item A natural transformation $\rh\colon GC\rtarr CG$ such that
the following two diagrams commute.
\[\xymatrix{
& G \ar[dl]_{G\et_{\oplus}} \ar[dr]^{\et_{\oplus}G} & \\
 GC \ar[rr]^-{\rh} & & CG \\ 
& C \ar[ul]^{\et_{\otimes}C} \ar[ur]_{C\et_{\otimes}} & \\ } \]
and
\[\xymatrix{
GCC \ar[d]_{G\mu_{\oplus}} \ar[r]^-{\rh C} & CGC \ar[r]^{C\rh} 
& CCG \ar[d]^{\mu_{\oplus}G} \\ 
GC \ar[rr]^-{\rh} &   & CG \\ 
GGC \ar[r]_-{G\rh} \ar[u]^{\mu_{\otimes}C} & GCG \ar[r]_-{\rh G} & CGG.
\ar[u]_{C\mu_{\otimes}} \\ }\]
\end{enumerate}
When given such data, the category $C[G[\sV]]$ of $C$-algebras in
$G[\sV]$ is isomorphic to the category $CG[\sV]$ of $CG$-algebras
in $\sV$. 
\end{thm}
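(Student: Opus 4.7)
The plan is to prove the cyclic chain (iii) $\Rightarrow$ (ii) $\Rightarrow$ (i) $\Rightarrow$ (iii), with the final isomorphism of categories falling out as a formal consequence of the passage (ii) $\Leftrightarrow$ (i). Distributivity maps are the most concrete data, composite monads are an intermediate formulation, and monadic lifts to $G[\sV]$ are the most abstract, so this ordering lets each step build on the previous.

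For (iii) $\Rightarrow$ (ii), given $\rh\colon GC \rtarr CG$, I would set
\[ \mu\colon CGCG \xrightarrow{C\rh G} CCGG \xrightarrow{\mu_{\oplus}GG} CGG \xrightarrow{C\mu_{\otimes}} CG \]
and take $\et = (\et_{\oplus} G)\com \et_{\otimes} = (C\et_{\otimes})\com \et_{\oplus}$, the two expressions agreeing by the interchange law for horizontal composition. The upper (unit) triangle of (iii) yields both the standard unit axioms for $(CG,\mu,\et)$ and the mixed identity (c) by direct diagram chases; property (b), that $C\et_{\otimes}$ and $\et_{\oplus}G$ are maps of monads, then follows from the two halves of that triangle. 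Associativity of $\mu$ is where the second diagram of (iii) enters: its two rectangles encode how $\rh$ interchanges with $\mu_{\oplus}$ and $\mu_{\otimes}$, and combining them with the individual associativities of $\mu_{\oplus}$ and $\mu_{\otimes}$ reduces both composites $CGCGCG \rtarr CG$ to a common normal form obtained by first moving all $C$'s to the left of all $G$'s and then applying $\mu_{\oplus}$ and $\mu_{\otimes}$.

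For (ii) $\Rightarrow$ (i), I claim that a $CG$-action $\xi\colon CGX \rtarr X$ decomposes canonically as a pair $(\xi_{\otimes}, \xi_{\oplus})$ with $\xi_{\otimes} = \xi \com (\et_{\oplus}G)X$ and $\xi_{\oplus} = \xi \com (C\et_{\otimes})X$. Property (b) ensures that $\xi_{\otimes}$ and $\xi_{\oplus}$ are genuine $G$- and $C$-algebra structures, and the associativity of $\xi$ forces the compatibility that $\xi_{\oplus}$ is a map of $G$-algebras. Conversely, any such compatible pair reassembles into $\xi_{\oplus}\com C\xi_{\otimes}\colon CGX \rtarr CX \rtarr X$, and the two assignments are mutually inverse by property (c) together with the standard monad unit axioms. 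This simultaneously produces the lift of $C$ to a monad on $G[\sV]$ and the final isomorphism $CG[\sV] \iso C[G[\sV]]$. For (i) $\Rightarrow$ (iii) closing the cycle, I would apply the lifted $C$ to the free $G$-algebra $GY$ to obtain the $G$-structure $\bar{\mu}_Y\colon GCGY \rtarr CGY$ on $CGY$, and define
\[ \rh_Y = \bar{\mu}_Y \com GC\et_{\otimes}Y \colon GCY \rtarr GCGY \rtarr CGY. \]
The upper triangle of (iii) follows from the naturality of $\et_{\oplus}$ as a morphism of the monadic lift together with the unit axiom $\mu_{\otimes}\com G\et_{\otimes} = \id$, while the two rectangles follow from the associativity and naturality of the lifted $C$-action.

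The main obstacle will be the verification of associativity of $\mu$ in (iii) $\Rightarrow$ (ii): organizing a large commutative diagram that coordinates the two rectangles in (iii) with the associativities of $\mu_{\oplus}$ and $\mu_{\otimes}$. While nothing conceptually deeper than bookkeeping is involved, the sheer number of arrows makes this the place where errors typically slip in.
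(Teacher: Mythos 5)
Your steps (iii) $\Rightarrow$ (ii) and (i) $\Rightarrow$ (iii) agree with the paper's (and Beck's) formulas: your
$\mu = C\mu_{\otimes}\com \mu_{\oplus}GG\com C\rh G$ equals the paper's $\mu_{\oplus}G\com CC\mu_{\otimes}\com C\rh G$ by naturality of $\mu_{\oplus}$ applied to $\mu_{\otimes}$, and your $\rh_Y = \bar{\mu}_Y\com GC\et_{\otimes}Y$ is exactly the composite $GC\rtarr GCG\rtarr CG$ used in the paper, where the second map is the induced $G$-action on $CG$. The genuine gap is in your step (ii) $\Rightarrow$ (i). To formulate the compatibility ``$\xi_{\oplus}\colon CX\rtarr X$ is a map of $G$-algebras,'' and a fortiori to produce the lift of $C$ to a monad on $G[\sV]$, you must first equip $CX$ with a $G$-algebra structure for \emph{every} $G$-algebra $(X,\xi)$, functorially, and check that $\et_{\oplus}$ and $\mu_{\oplus}$ are $G$-maps --- that is precisely the content of (i), and the bare decomposition of a $CG$-action into $\xi_{\otimes}$ and $\xi_{\oplus}$ does not supply it. As written, the step is circular: the needed $G$-structure on $CX$ is asserted rather than constructed, and the claim that the pair-decomposition ``simultaneously produces the lift'' is exactly what has to be proved.

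The standard repair, which is the paper's route, is to interpose (iii): extract $\rh$ from $\mu$ as the composite
\[ \xymatrix@1{GC \ar[r]^-{GC\et_{\otimes}} & GCG \ar[r]^-{\et_{\oplus}GCG} & CGCG \ar[r]^-{\mu} & CG,} \]
verify the diagrams of (iii) from (a)--(c), and then define the $G$-action on $CX$ as $C\xi\com\rh_X$; with that in hand your decomposition of $CG$-algebras into compatible pairs, and hence the isomorphism $CG[\sV]\iso C[G[\sV]]$, goes through as you describe (the paper writes the two directions as $\ps = \tha\com C\xi$ and pullback along the monad maps $\et_{\oplus}G$ and $C\et_{\otimes}$). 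Alternatively, you could construct the lift directly from (ii) by presenting $X$ as the split coequalizer of $GGX\rightrightarrows GX$, applying $C$, and descending the $G$-structure on $CGX$ (coming from the free $CG$-algebra) to $CX$; but that argument needs to be made explicitly, and it is more work than simply reordering your cycle to (iii) $\Rightarrow$ (ii) $\Rightarrow$ (iii) $\Rightarrow$ (i) or, as in the paper, treating (ii) $\Leftrightarrow$ (iii) and (i) $\Leftrightarrow$ (iii) separately.
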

\begin{proof}[Sketch proof] Details are in \cite{Beck}. We relate
(i) to (iii) and (ii) to (iii).  Given the 
data of (i), we obtain the data of (iii) by defining $\rh$ to be the 
composite 
\[ \xymatrix@1{ 
GC\ar[r]^-{GC\et_{\otimes}} & GCG \ar[r]^-{\xi} & CG, \\} \]
where, for $X\in \sV$, $\xi$ is the action of $G$ on $CGX$ 
induced from the canonical action of $G$ on $GX$.  Given
the map $\rh$ as in (iii) and given a $G$-algebra $(X,\xi)$,
the following composite specifies a natural action of $G$ on $CX$
that satisfies (i). 
\[ \xymatrix@1{ 
GCX \ar[r]^-{\rh} & CGX \ar[r]^-{C\xi} & CX \\} \]
Given $\mu$ satisfying (ii), the following composite is a 
map $\rh$ satisfying (iii). 
\[ \xymatrix@1{
GC \ar[r]^-{GC\et_{\otimes}} & GCG\ar[r]^-{\et_{\oplus}GCG} &
CGCG \ar[r]^-{\mu} & CG \\} \]
Given $\rh$ satisfying (iii), the following 
composite is a map $\mu$ satisfying (ii).
\[ \xymatrix@1{ 
CGCG \ar[r]^-{C\rh G} & CCGG \ar[r]^-{CC\mu_{\otimes}} 
& CCG \ar[r]^-{\mu_{\oplus}G} & CG\\} \]
Given these equivalent data, a $C$-algebra $(X,\xi,\tha)$ in
$G[\sV]$ determines a $CG$-algebra $(X,\ps)$ in $\sV$ by letting
$\ps$ be the composite
\[\xymatrix@1{
CGX \ar[r]^-{C\xi} & CX \ar[r]^-{\tha} & X,\\ }\]
and a $CG$-algebra $(X,\ps)$ determines a $C$-algebra
$(X,\xi,\tha)$ in $G[\sV]$ by letting $\xi$ and $\tha$ be
the pullbacks of $\ps$ along the maps of monads $\et_{\oplus}G$
and $C\et_{\otimes}$ of (iib). 
\end{proof}

\end{document}